\newtheorem{theorem}{Theorem}
\numberwithin{theorem}{section}
\newtheorem{lemma}[theorem]{Lemma}
\newtheorem{proposition}[theorem]{Proposition}
\newtheorem{definition}[theorem]{Definition}
\newtheorem{remark}[theorem]{Remark}
\definecolor{G1}{rgb}{0.0, 0.5, 0.0}
\definecolor{FG}{rgb}{0.0, 0.5, 0.0}
\title{ An augmented phase plane approach for discrete planar maps: Introducing next-iterate  operators}
\author{
  Streipert, Sabrina H.\\
  \texttt{streipes@mcmaster.ca}
  \and
  Wolkowicz, Gail S.\ K. \\
  \texttt{wolkowic@mcmaster.ca}
}
\date{\today}
\begin{document}

\maketitle

\begin{abstract}
The next-iterate operators and corresponding next-iterate {\it root-sets} and {\it root-curves}  associated with the nullclines   of a planar discrete map are introduced. How to  augment standard phase portraits  that include the nullclines and the direction field, 
 by including the signs of the root-operators  associated with their  nullclines, thus producing an {\it augmented phase portrait},  is described. 
The sign of a next-iterate operator associated with a nullcline determines whether a point is mapped above or below the corresponding nullcline and can,  for example,   identify positively invariant regions.
Using a Lotka--Volterra type competition model, we demonstrate  how to  construct the augmented phase portrait. We show that the augmented phase portrait provides an  elementary, alternative approach  for determining the complete global dynamics of this model. We further explore the limitations and potential  of the augmented phase portrait by considering a Ricker competition model,  a model involving mutualism,  and a predator--prey model.

\end{abstract}

 \noindent
 {\bf   Keywords:} Discrete population models, root-sets, root-curves,  positively invariant regions,  global analysis, phase portrait
 
 \medskip
 
 \noindent
 {\bf    2020 Mathematics Subject Classification:}     39A05, 39A30, 39A60,  92D25, 92D40


\section{Introduction}
Phase plane analysis of planar systems of ordinary differential equations with vector fields defined by  continuously differentiable functions has proven very useful for determining
both local and global  dynamics. 
We refer to the phase portrait that includes only the nullclines and the direction and bounds on the slope of the orbits in each of the regions bounded by the nullclines,  as ``standard phase portrait''.   For planar differential equations, it can be used to identify invariant and positively invariant regions. This is because,   by the Poincar\'{e}-Bendixson Theorem (see e.g., \cite{Allen2007, Edelstein1988}), distinct orbits in phase-space cannot intersect and by the continuity of  orbits in phase-space, they  can only cross nullclines in the direction indicated by the direction field.    


A well-known example of the successful application of   phase plane analysis  in the context of  planar systems of ordinary differential equations 
that has been extensively studied (see e.g., \cite{brauer2011, Braun1979,  Edelstein1988, Gause1934, Pielou1969}), 
is the classical two-species competition model:
\begin{equation}\label{Compete_cont}
    x'=r_1 x\left( 1 - \frac{x}{K_1}\right) - \alpha_1 xy, \qquad \qquad \qquad
    y'=r_2 y\left( 1 - \frac{y}{K_2}\right) - \alpha_2 xy,
\end{equation}
where $r_1,r_2>0$  denote  the growth rate, $K_1,K_2>0$  the carrying capacities, and $\alpha_1,\alpha_2>0$ the inter-specific competition impact rates, of species $x$ and $y$, respectively. System \eqref{Compete_cont} was proposed  by Lotka \cite{Lotka}   and Volterra \cite{Volterra1926}.  It is assumed that each species grows logistically in the absence of the other and  both inter- and intra- specific competition reduces each species numbers. 
It is possible  to determine the invariant and positively invariant regions and hence the local and global stability of the equilibria from the standard phase portraits shown in  Fig.~\ref{fig:Edelstein}\footnote{ All of the figures  were produced using Matlab \cite{Matlab:2020b}.}  (see e.g.,  \cite{ Allen2007, Edelstein1988}). For positive initial conditions, in a) and b) there is competitive exclusion  (in a) $y$ excludes $x$, in b) $x$ excludes $y$),  in c) outcomes are  initial condition dependent, and in d) all solutions  converge to the coexistence equilibrium.

\begin{figure}[!ht]
    \centering
 \includegraphics[width=0.45\textwidth, height = 6cm]{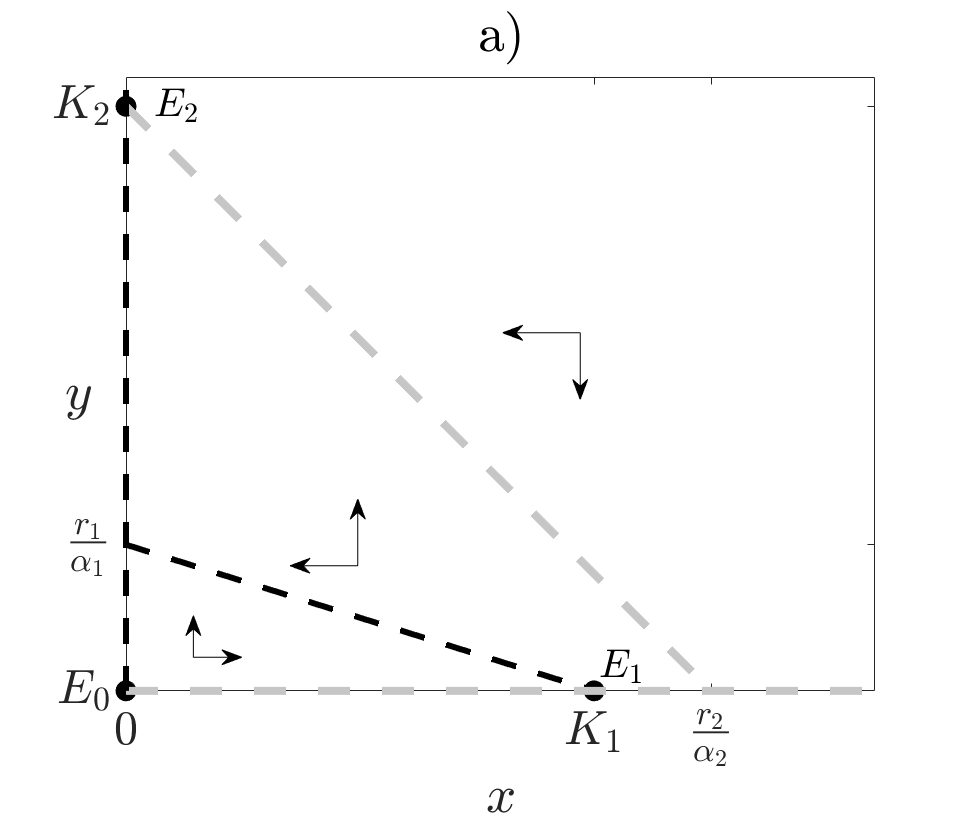}
    \includegraphics[width=0.45\textwidth, height = 6cm]{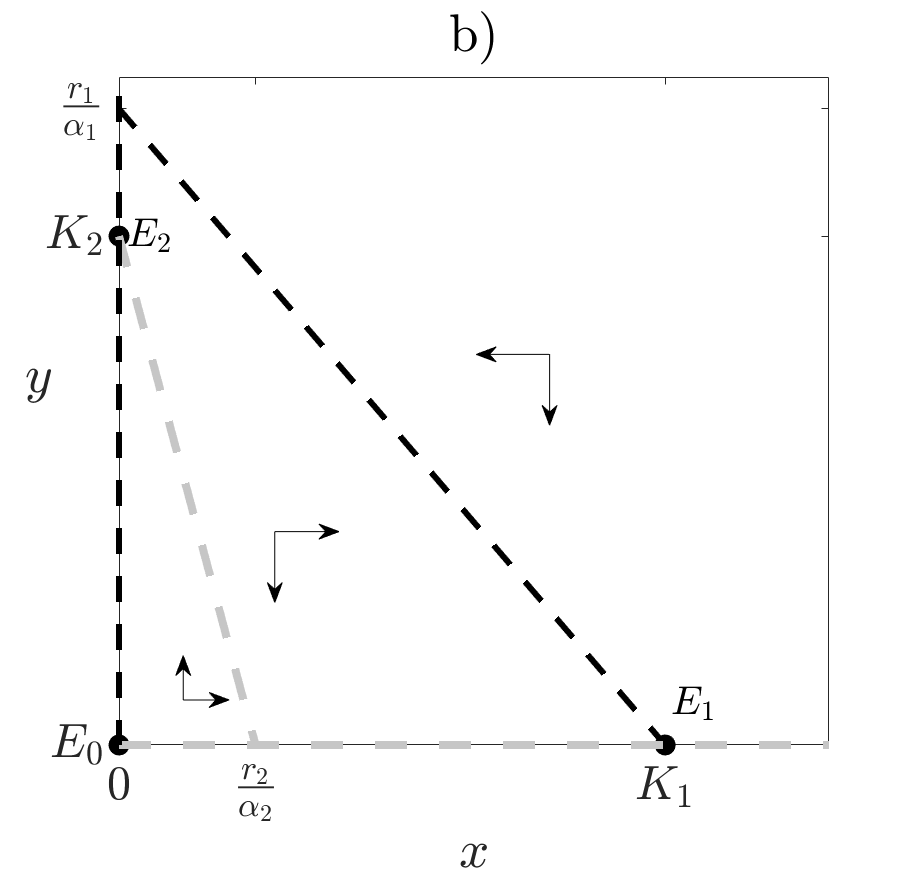}
    
    \includegraphics[width=0.45\textwidth, height = 6cm]{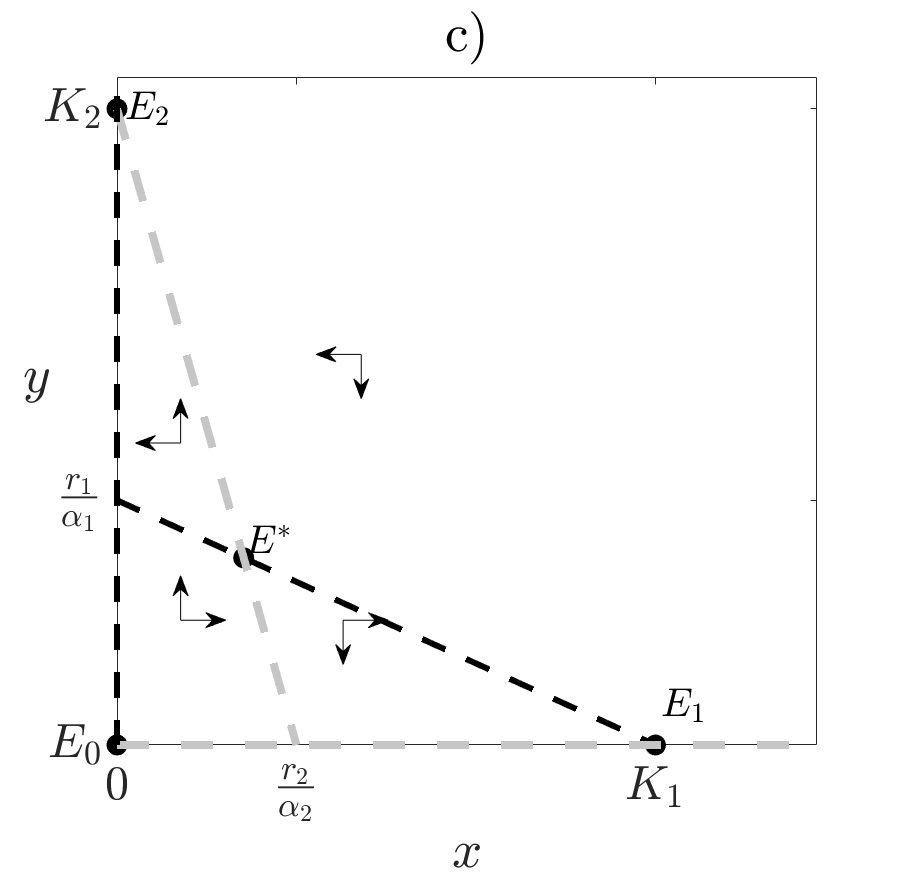}
    \includegraphics[width=0.45\textwidth, height = 6cm]{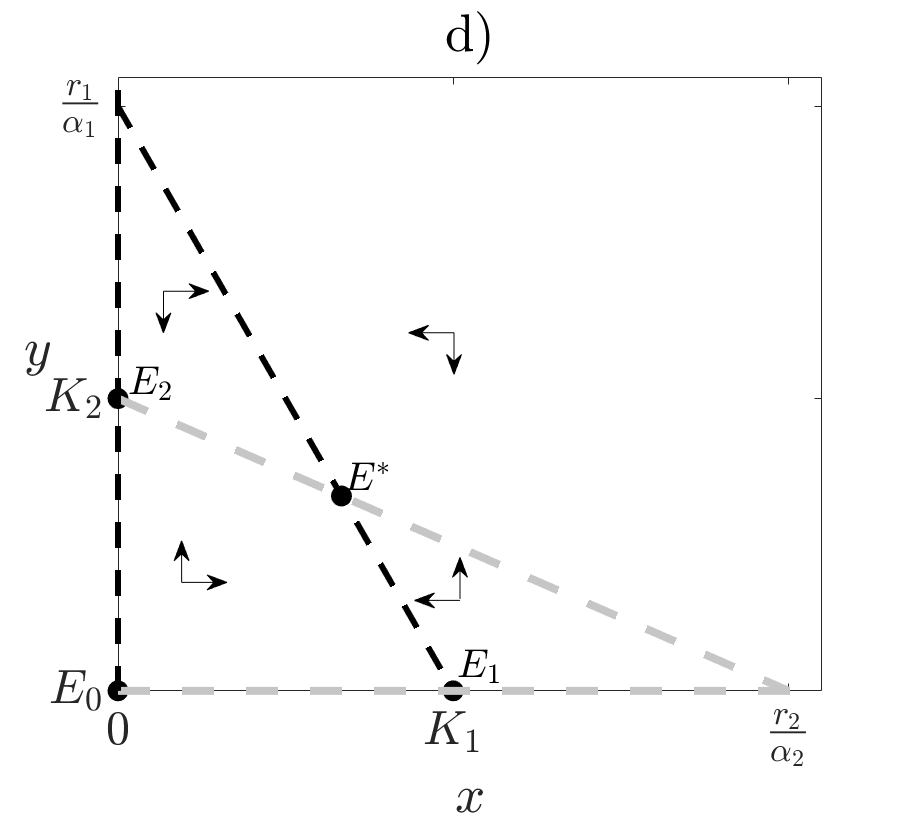}
    \caption{
    Phase portraits for the system of differential equations \eqref{Compete_cont}.
     The   black  dashed lines are the nullclines for population $x$
     and the light gray dashed lines are the nullclines for population $y$.
     The points $E_0, E_{1}$, and  $E_{2}$ are the boundary equilibria and $E^*$ is the coexistence equilibrium.    }
    \label{fig:Edelstein}
\end{figure}

The standard phase  portrait  has   not been as helpful for analyzing planar discrete maps. Unlike for smooth systems of planar ordinary differential equations for which the standard phase portrait can be used to find all of the  invariant and positively invariant regions, for planar discrete maps it is possible for orbits to   jump across one or more nullclines in a single iteration.  Therefore, the standard phase portrait  cannot be  used successfully to detect invariant or positively invariant regions.
This is demonstrated in Fig.~\ref{Fig:Jump}, where the standard phase plane is shown  for the discrete  Ricker competition map:  
\begin{equation}\label{eq:jumps}
    X_{t+1} = X_t \rm{e}^{(0.9 -X_t-0.4 Y_t)}, \qquad \qquad 
    Y_{t+1} =  Y_t \rm{e}^{(1.6-0.3 X_t -Y_t)}.
    \end{equation}

Fig.~\ref{Fig:Jump} shows the first few iterations of  orbits of \eqref{eq:jumps} with the initial conditions indicated by stars. In these phase portrait,  as well as in all of the phase portraits, nullclines will be included using dashed curves, with black curves used for  the $X$-equation and gray curves for  the $Y$-equation. The line segments with arrows indicate the direction and bounds on the slope of the orbits in each of the regions bounded by the nullclines, and will be referred to  simply as the direction field, for convenience. 

In Fig.~\ref{Fig:Jump}a), the orbit jumps across both nullclines and in b) the orbit jumps outside of a region that would be positively invariant if  the  phase portrait were  for a continuous system.  This illustrates   the main  drawbacks with regard to using standard phase portraits to analyze  discrete planar models. Such issues even occur in linear planar maps as pointed out in \cite[p.~48]{Galor2007}. 

\begin{figure}[!htb]
    \centering
    a) \hspace{5cm} b) \hspace{5cm}
    \includegraphics[ scale=0.3]{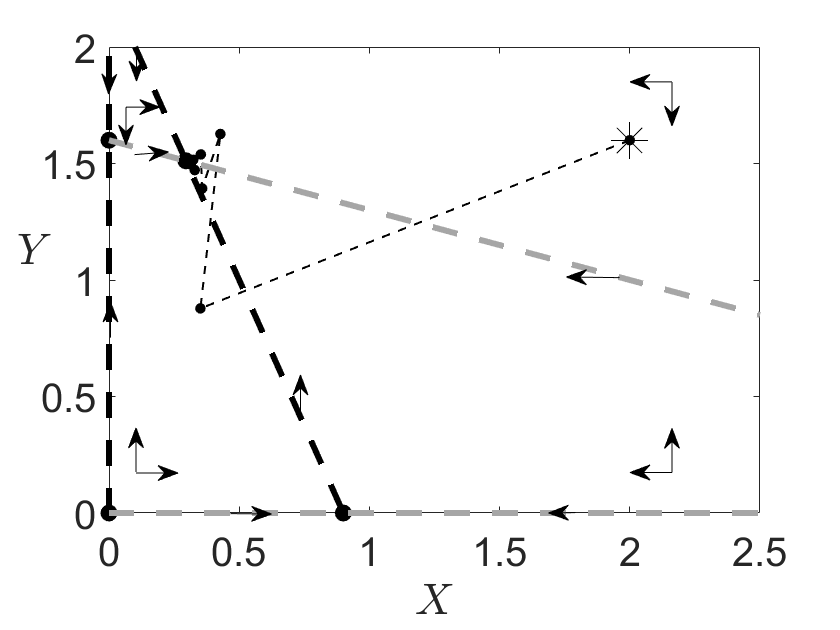}
\includegraphics[ scale=0.3]{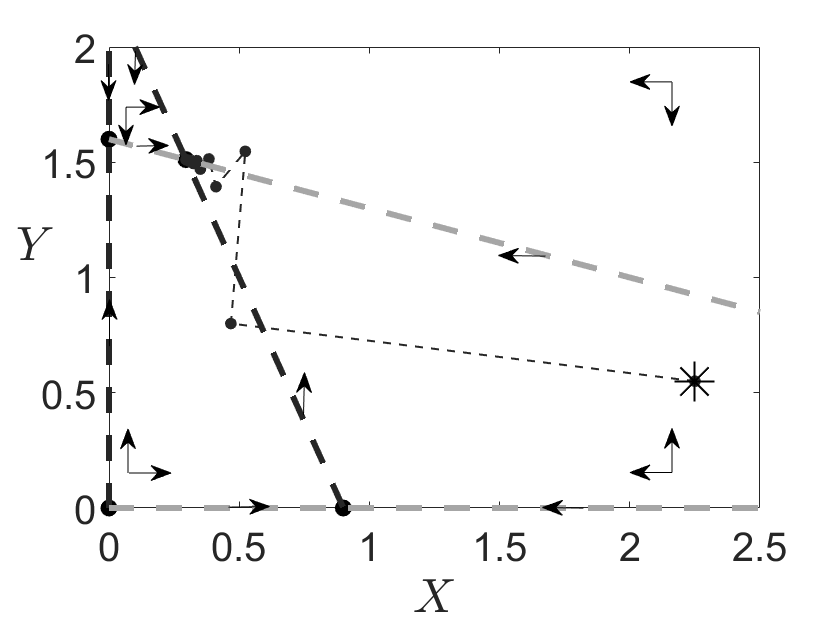}
    \caption{
The standard phase portrait for discrete planar system \eqref{eq:jumps},  including different positive semi-orbits  with the initial point of each indicated by a star.  The nullclines are shown as   dashed curves using  black and gray for the ones   related to the 
     $X$ and $Y$ -equations, respectively. Unlike for systems of ordinary differential equations, orbits of maps can jump over both nullclines as in a). The orbit in b) leaves the  region bounded by the nontrivial nullclines and the $X$-axis in one iteration, a region that would be positively invariant if the phase portrait was for a planar system of ordinary differential equations. For example, the configuration of the nullclines and the direction field is the same as in Fig~\ref{fig:Edelstein} d). }
    \label{Fig:Jump}
\end{figure}
%

To overcome some of these  drawbacks, in Section~\ref{sec:operator},  we introduce next-iterate operators associated with nullclines and the corresponding root-sets and root-curves.  The sign of the next-iterate operator associated with its nullcline determines on which side of that nullcline the next iterate lies. Root-sets determine root-curves  that are curves along which  the next-iterate operator equals zero. Root-curves therefore sub-divide the phase plane into regions in which the sign of the operator is constant.

To show how to augment the standard phase plane by including the signs of the next-iterate operators  and then use the augmented phase plane to analyze planar discrete maps, in  Section~\ref{sec:compete},  we illustrate the method on the following discrete version of \eqref{Compete_cont}, 
\begin{equation}\label{Compete}
    X_{t+1}=F(X_t,Y_t)=\frac{1+r_1}{1+\frac{r_1}{K_1}X_t+\alpha_1 Y_t}X_t, \qquad     Y_{t+1}=G(X_t,Y_t)=\frac{1+r_2}{1+\frac{r_2}{K_2}Y_t+\alpha_2 X_t}Y_t,
\end{equation}
with initial conditions $X_0,Y_0\geq 0$.  The model parameters have the same interpretation as in model \eqref{Compete_cont}.




Model   \eqref{Compete} is well-known and was first derived by Leslie \cite{Leslie1958} who described the possible asymptotic outcomes of \eqref{Compete} as the same as for   model \eqref{Compete_cont}.
More recently, \eqref{Compete} was derived  in \cite{Liu2001}, using a Mickens discretization scheme and in \cite{StWo2} by applying a fitness function approach. The local analysis of \eqref{Compete} (see \cite{Liu2001, Pielou1969}), was  extended in \cite{Baigent2016, Cushing2004,Liu2001}  using different techniques. 
For example, in \cite{Baigent2016}, the idea of a carrying simplex was applied while the analysis in \cite{Liu2001} relied on the theory of monotone  dynamical systems.



 Using the augmented phase portrait to analyze model \eqref{Compete},
 we were able to  determine the complete global dynamics using an alternative, more elementary  method, compared to the approaches used in 
 \cite{ Baigent2016, Cushing2004, Liu2001}. In Section~\ref{sec:ext_limits}, we apply the method to several other systems and discuss some limitations.

\section{The next-iterate operator and associated root-set and  root-curves}\label{sec:operator}

Consider the general planar system 
\begin{equation}\label{gensys}
X_{t+1}=F(X_t,Y_t), \qquad \qquad Y_{t+1}=G(X_t,Y_t).
\end{equation}

Let $Y=\ell(X)$ be a nullcline of \eqref{gensys}. We  introduce the next-iterate operator, root-set, and  root-curve associated with   this nullcline to augment the standard phase portrait to make it more useful for the analysis of  \eqref{gensys}.

\begin{definition}\label{def:L_ell}
The next-iterate  operator associated with the nullcline $Y=\ell(X)$ is the function
\begin{equation*}
\mathcal{L}_{\ell}(X,Y):=G(X,Y)-\ell(F(X,Y)).
\end{equation*}
\end{definition}

By Definition \ref{def:L_ell}, it follows that 
$\mathcal{L}_{\ell}(X_t,Y_t)=Y_{t+1}-\ell(X_{t+1})$, 
so that
 \[\begin{array}{llll}
\mathcal{L}_{\ell}(X_t,Y_t)>0 & \iff& \mbox{\textbf{ next iterate lies above }} & Y=\ell(X),\\[1mm]
\mathcal{L}_{\ell}(X_t,Y_t)=0 & \iff & \mbox{\textbf{ next iterate lies on }}& Y=\ell(X),\\[1mm]
\mathcal{L}_{\ell}(X_t,Y_t)<0 & \iff & \mbox{\textbf{next iterate lies below }}&Y=\ell(X).
\end{array}
\]

Since the sign of the next-iterate operator  tells us  on which side of the associated nullcline  the next iterate lies, it is useful to subdivide the phase plane into regions based on the signs of  the next-iterate operators associated with the nontrivial nullclines and  augment the standard phase portrait by including these signs.

  In all of the phase portraits in this manuscript,   all curves  related to the $X$-equation in \eqref{gensys}  will be black and all curves  related to the  $Y$-equation in \eqref{gensys}  will be gray.  Besides the   dashed curves for the nullclines, we include   '+' and '--' symbols  to indicate the sign of the next-iterate operator in various regions using the  matching colors.  When the root-curves are  included, we will use  solid curves in the matching  colors.

\begin{remark}\label{Remark1}
Definition \ref{def:L_ell} requires that the nullcline can be expressed as a function $Y=\ell(X)$. If this is however not the case, but rather, the nullcline can be expressed as  $X=\kappa(Y)$, then the corresponding next-iterate operator would be defined as 
$$ \widehat{\mathcal{L}}_{\kappa}(X,Y)=F(X,Y)-\kappa(G(X,Y)).$$
In this case, the next-iterate operator identifies next iterates of an orbit to be on the ``left'' or the ``right'' of the nullcline 
$X=\kappa(Y)$ instead of ``above'' or ``below''. In this case, the  following Definitions~\ref{def:Sell} and \ref{def:rell}   of  root-set and root-curves  would have to be adjusted accordingly. 
\end{remark}

\begin{definition}\label{def:Sell}
The next-iterate  root-set  (in short: root-set) associated with the nullcline $Y=\ell(X)$ is the set
\begin{equation*}
S_{\ell}:=\{(X,Y)\in \mathbb{R}^2\, \colon\,  \mathcal{L}_{\ell}(X,Y)=0\}. 
\end{equation*}
\end{definition}


\begin{definition}\label{def:rell}
 The next-iterate root-curves (in short: root-curves)  associated with the nullcline $Y=\ell(X)$ are  curves  
   $Y=r(X)$ or $X=R(Y)$ that satisfy $\mathcal{L}_{\ell}(X,r(X))=0$ or $\mathcal{L}_{\ell}(R(Y),Y)=0$.
\end{definition}

\begin{lemma}\label{Lemnoint}
Let $S_\ell$ be the root-set defined  in Definition~\ref{def:Sell} associated with the nullcline $Y=\ell(X)$ of \eqref{gensys}.  Let $\mathcal{E}_{\ell}$ denote the subset of  
 equilibria of \eqref{gensys} that  lie on  $Y=\ell(X)$.

\begin{enumerate}
\item[a)] If $Y=\ell(X)$ is a nullcline  for the $X$-equation, that is $F(X,\ell(X))=X$, \\then
$S_\ell\cap \{(X,Y): Y=\ell(X)\}  = \mathcal{E}_{\ell}.$ 
\item[b)] If $Y=\ell(X)$ is a nullcline for the $Y$-equation, that is $G(X,\ell(X))=\ell(X)$, and $Y=\ell(X)$ is injective, then
$S_\ell\cap \{(X,Y): Y=\ell(X)\}  = \mathcal{E}_{\ell}.$
\end{enumerate}
\end{lemma}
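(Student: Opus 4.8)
The plan is to prove each set equality by establishing both inclusions. I would handle the ``easy'' inclusion $\mathcal{E}_\ell \subseteq S_\ell \cap \{(X,Y):Y=\ell(X)\}$ uniformly for both parts, and then treat the reverse inclusion separately in a) and b), since it is there that the structural hypothesis on the nullcline (and, in b), the injectivity of $\ell$) actually enters.

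First I would verify $\mathcal{E}_\ell \subseteq S_\ell \cap \{(X,Y):Y=\ell(X)\}$, which holds in both parts with no extra hypothesis. If $(X,Y)\in\mathcal{E}_\ell$, then by definition $Y=\ell(X)$ and $(X,Y)$ is an equilibrium, so $F(X,Y)=X$ and $G(X,Y)=Y$. Substituting into the operator gives $\mathcal{L}_\ell(X,Y)=G(X,Y)-\ell(F(X,Y))=Y-\ell(X)=0$, so $(X,Y)\in S_\ell$, and it lies on the nullcline by assumption. Hence the point is in the intersection.

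For the reverse inclusion in part a), I would take any $(X,Y)$ with $Y=\ell(X)$ and $\mathcal{L}_\ell(X,Y)=0$. The defining property of an $X$-nullcline, $F(X,\ell(X))=X$, lets me simplify $\ell(F(X,Y))=\ell(F(X,\ell(X)))=\ell(X)$, so the equation $\mathcal{L}_\ell(X,Y)=0$ collapses to $G(X,\ell(X))=\ell(X)$. Combined with $F(X,\ell(X))=X$, this says precisely that $(X,Y)$ is an equilibrium lying on the nullcline, i.e. $(X,Y)\in\mathcal{E}_\ell$. Notably, no injectivity is needed here, because the nullcline condition pins down $F$ directly and $\ell$ is evaluated at a point already known to equal $X$.

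For part b), the roles of $F$ and $G$ are interchanged, and this is where the injectivity hypothesis becomes essential. Starting from $\mathcal{L}_\ell(X,\ell(X))=0$ and using the $Y$-nullcline property $G(X,\ell(X))=\ell(X)$, I obtain $\ell(X)=\ell\bigl(F(X,\ell(X))\bigr)$. I would then invoke the injectivity of $\ell$ to cancel $\ell$ and conclude $F(X,\ell(X))=X$; together with $G(X,\ell(X))=\ell(X)$ this again exhibits $(X,Y)$ as an equilibrium on the nullcline, giving $(X,Y)\in\mathcal{E}_\ell$. I expect this cancellation to be the main obstacle, and it is exactly the reason the two parts are not symmetric: without injectivity, $\ell(X)=\ell\bigl(F(X,\ell(X))\bigr)$ only forces $X$ and $F(X,\ell(X))$ to share the same $\ell$-value, which need not make $(X,Y)$ a fixed point, so the inclusion $S_\ell\cap\{(X,Y):Y=\ell(X)\}\subseteq\mathcal{E}_\ell$ could fail.
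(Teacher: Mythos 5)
Your proposal is correct and follows essentially the same argument as the paper: the easy inclusion is verified directly from the definition of $\mathcal{L}_\ell$, part a) uses $F(X,\ell(X))=X$ to reduce the root-set condition to $G(X,\ell(X))=\ell(X)$, and part b) uses the $Y$-nullcline identity plus injectivity of $\ell$ to recover $F(X,Y)=X$. Your observation about why injectivity is needed only in b) matches the structure of the paper's proof.
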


\begin{proof} 
Assume that $(X,Y)\in \mathcal{E}_{\ell}.$ Then, $Y=\ell(X)$  and
$\mathcal{L}_{\ell}(X,Y)=G(X,Y)-\ell(F(X,Y))=Y-\ell(X)=0.$ Therefore, $\mathcal{E}_{\ell}\subseteq \mathcal{S}_{\ell}\cap \{(X,Y) : \, Y=\ell(X)\}.$

{\it a)} Assume  $(X,Y)\in S_\ell\cap \{(X,Y): Y=\ell(X)\}$,
where  $Y=\ell(X)$ is a  nullcline for the $X$-equation so that  $F(X,\ell(X))=X$.
Since $(X,Y)\in S_{\ell}$, $\mathcal{L}_{\ell}(X,Y)=0$ and therefore, 
$G(X,Y)=\ell(F(X,Y))$. Since $Y=\ell(X)$, 
$G(X,Y)=G(X,\ell(X))=\ell(F(X,\ell(X))=\ell(X)=Y$.  Thus, 
$X=F(X,Y)$ and $Y=G(X,Y)$, and therefore  $(X,Y)\in \mathcal{E}_{\ell}$, completing the proof for a).

{\it b)} Assume that $Y=\ell(X)$ is injective and is a nullcline for the $Y$-equation so that $Y=G(X,Y)$. If $(X,Y)\in S_{\ell}\cap \{(X,Y)\colon Y=\ell(X)\}$, then $0=\mathcal{L}_\ell(X,Y)=G(X,Y)-\ell(F(X,Y))$,   so that
$\ell(X)=Y=G(X,Y)=\ell(F(X,Y))$. Since  $Y=\ell(X)$ is injective, $F(X,Y)=X$.  Therefore, $(X,Y)\in \mathcal{E}_{\ell}$, completing the proof for b).
\end{proof}

\begin{remark}
If $Y=\ell(X)$ is a nullcline for $Y$ where  $Y=\ell(X)$ is not injective, and if instead, the nullcline can be expressed as a function $X=\kappa(Y)$, then the result in Lemma \ref{Lemnoint} still holds  for the next-iterate operator 
$\widehat{\mathcal{L}}_{\kappa}(X,Y)$ defined in Remark \ref{Remark1}, with Definitions \ref{def:Sell} and  \ref{def:rell}  adjusted accordingly.
\end{remark}

\begin{remark}\label{intersectgen}

Let $Y=\ell_1(X)$ and $Y=\ell_2(X)$ be the nullclines associated with the $X${\color{red}-} and $Y${\color{red}-}equations, respectively, i.e., $F(X,\ell_1(X))=X$ and $G(X,\ell_2(X))=\ell_2(X)$. Then, $(X,Y)\in S_{\ell_1}\cap S_{\ell_2}$,   where  $S_{\ell_1}$ and $S_{\ell_2}$ are the corresponding root-sets, if and only if $(X,Y)$ is an equilibrium or is  mapped in one iteration to an equilibrium.  
\end{remark}

\section{Analysis of (\ref{Compete}) using the Augmented Phase Portrait}
\label{sec:compete}

For \eqref{Compete}, we define the {\it competitive efficiency} of species $X_i$ with competitor $X_j$ as
\begin{equation}\label{def:Cij}
\mathcal{C}_{ij}:=\frac{r_i}{\alpha_i}-K_j, \qquad \qquad i\neq j; \, \, i,j\in \{1,2\}. \end{equation}
 The relative values of these competitive efficiencies will be shown to determine the asymptotic outcome of the solutions. 
 



 Model \eqref{Compete}  satisfies the {\it Axiom of Parenthood} \cite{Edelstein1988, Hutchinson1978}, that is, every new generation must have had a parent generation so that if $X_0=0$, then $X_t=0$ for all $t\geq 0$. Similarly,  if $Y_0=0$, then  $Y_{t}=0$, for all $t\geq 0$. 
Therefore, each axis bounding the first quadrant is  invariant.
For all $t\geq 0$, if  $X_0>0$, then $X_t>0$ and if $Y_0>0$, then $Y_t>0$.  Therefore, solutions with positive initial conditions cannot become negative. 

In this section, we use model \eqref{Compete} with initial conditions $X_0,Y_0\geq 0$ to illustrate how to construct the augmented phase portrait  and then use it to determine the global dynamics of \eqref{Compete}. 

\subsection{Construction of the Augmented Phase Portrait  for (\ref{Compete})}

\subsubsection{Step I: Nullclines, Equilibria,  and Direction Field}


First, we obtain the nullclines and determine the direction of  component-wise monotonicity in each of the regions separated by the nullclines.

The  nullclines for competitor population $X$ are the vertical line $X=0$ and 
\begin{equation}\label{eq:Linex}
    Y=h(X)=\frac{r_1}{\alpha_1 K_1}(K_1-X),
\end{equation} 
so that for $X>0$  and $Y>0$,
\begin{equation}\label{SgnDeltax}
     F(X,Y)-X  = \frac{\alpha_1 X}{1+\frac{r_1}{K_1}X+\alpha_1 Y}\left(h(X)-Y\right) = \quad \begin{cases} \quad <\quad 0, & \mbox{ if}\quad Y>h(X),\\
   \quad  =\quad 0, & \mbox{ if}\quad Y=h(X),\\
   \quad  >\quad 0, & \mbox{ if}\quad  Y<h(X).\end{cases}
\end{equation}
The  nullclines for competitor population $Y$ are  the  horizontal  line $Y=0$ and  the line
\begin{equation}\label{Liney}
Y=k(X)=\frac{K_2}{r_2}(r_2-\alpha_2 X),
\end{equation}  
so that for $X>0$ and $Y>0$,   
\begin{equation}\label{SgnDeltay}
G(X,Y)-Y  = \frac{\frac{r_2}{K_2}Y}{1+\frac{r_2}{K_2} Y +\alpha_2 X}\left(k(X)-Y\right)  =\quad \begin{cases} \quad <\quad 0, & \mbox{ if}\quad  Y>k(X)\\
\quad =\quad 0, & \mbox{ if}\quad  Y=k(X)\\
\quad >\quad 0, & \mbox{ if}\quad  Y<k(X).\end{cases}
\end{equation}

The set of biologically relevant equilibria of \eqref{Compete},  denoted $\mathcal{E}$,  always contains three boundary equilibria:
\begin{equation*}
    E_0=(0,0), \qquad  \quad E_{1}=(K_1,0), \qquad \quad E_{2}=(0,K_2).
\end{equation*}
 When $C_{12} \cdot C_{21}>0$,  the two nullclines $Y=h(X)$ and $Y=k(X)$  cross in the interior of the first quadrant  at  a unique  coexistence    equilibrium, $E^*$, also contained in $\mathcal{E}$, where
\begin{equation}\label{coexequ}
E^*=(X^*,Y^*)=\left(\frac{r_2 K_1(\alpha_1 K_2- r_1)}{(\alpha_1 \alpha_2 K_1 K_2 - r_1 r_2)},\frac{r_1 K_2 (\alpha_2 K_1 - r_2)}{(\alpha_1 \alpha_2 K_1 K_2 - r_1 r_2)}\right) \in (0,K_1)\times (0,K_2).
\end{equation}
In the special case when  $C_{12}=C_{21}=0$,  and so $h(X)= k(X)$ for all $X$,  the entire line segment of equilibrium points, 
\begin{equation*}
    \mathcal{E}_X^*=\{(X,Y)\, : \, Y=h(X), \, 0\leq X\leq K_1\},
\end{equation*}
 is contained in $\mathcal{E}$. 
Note that in this case, $E_{1}$ and $E_{2}$ are in  $\mathcal{E}_X^*$, and  $\mathcal{E}=\{E_{0}\} \cup  \mathcal{E}_X^*$.

It follows that the standard phase portrait for the discrete map \eqref{Compete} looks the same as the phase portrait for the system of differential equations \eqref{Compete_cont} shown in Fig.~\ref{fig:Edelstein}  if the labels $x$ and $y$ on the axes are replaced by $X$ and $Y$, respectively.


\subsubsection{Step II: Next-iterate operators, Root-Sets, and Root-Curves }

By Definition~\ref{def:L_ell}, the {\it next-iterate operators} associated with the positive nullclines for  competitors $X$ and $Y$ are given by  
\begin{equation*}
\mathcal{L}_h(X,Y):=G(X,Y)-h(F(X,Y)) \qquad \mbox{ and }\qquad
\mathcal{L}_k(X,Y):=G(X,Y)-k(F(X,Y)),
\end{equation*}
 respectively. Then, 
 \begin{samepage}
 \begin{align}
        \mathcal{L}_h(X,0)&=G(X,0)-h(F(X,0))=0-h(F(X,0))=-h(F(X,0)),   \label{LhX0}\\             
        \mathcal{L}_k(X,0)&=G(X,0)-k(F(X,0))=0-k(F(X,0))=-k(F(X,0)),\label{LkX0}\\
        \mathcal{L}_h(0,Y)&=G(0,Y)-h(F(0,Y))=G(0,Y)-h(0)=G(0,Y)-\frac{r_1}{\alpha_1},  
        \label{LhY0}\\     
        \mathcal{L}_k(0,Y)& =G(0,Y)-k(F(0,Y))=G(0,Y)-k(0)=G(0,Y)-K_2. \label{LkY0}        
\end{align}
\end{samepage}
 By \eqref{LhX0}, $\mathcal{L}_h(X,0)<0$ for all $X\in [0,K_1)$ and, by \eqref{LkY0}, $\mathcal{L}_k(0,Y)<0$ for all $Y\in [0,K_2)$.

\begin{lemma}\label{usefulhk}
Assume that $X>0$ and $Y>0$. 

\begin{enumerate}
    \item[a)] If $h(X)<k(X)$, then $\mathcal{L}_h(X,h(X))>0$ for $X\in (0,K_1)$ and $\mathcal{L}_k(X,k(X))<0$ for $X\in \left(0, \frac{r_2}{\alpha_2}\right)$. 
    \item[b)] If $k(X)<h(X)$, then $\mathcal{L}_h(X,h(X))<0$ for  $X\in (0,K_1)$ and  $\mathcal{L}_k(X,k(X))>0$ for $X\in \left(0,\frac{r_2}{\alpha_2}\right)$ 
\end{enumerate}
\end{lemma}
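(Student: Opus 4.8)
The plan is to exploit the nullcline-defining identities together with the already-established sign factorizations \eqref{SgnDeltax} and \eqref{SgnDeltay}, so that each next-iterate operator evaluated along its own nullcline collapses to a manifestly signed quantity. The first observation I would make is that since $Y=h(X)$ is the nontrivial $X$-nullcline, $F(X,h(X))=X$, whence $h(F(X,h(X)))=h(X)$ and therefore $\mathcal{L}_h(X,h(X))=G(X,h(X))-h(X)$. Substituting $Y=h(X)$ into \eqref{SgnDeltay} gives
\[
\mathcal{L}_h(X,h(X))=\frac{\frac{r_2}{K_2}h(X)}{1+\frac{r_2}{K_2}h(X)+\alpha_2 X}\bigl(k(X)-h(X)\bigr).
\]
For $X\in(0,K_1)$ one has $h(X)>0$, so the prefactor is strictly positive and the sign of $\mathcal{L}_h(X,h(X))$ equals the sign of $k(X)-h(X)$. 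This immediately settles the $\mathcal{L}_h$ statements in both a) and b).

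The symmetric argument for $\mathcal{L}_k$ uses the $Y$-nullcline identity $G(X,k(X))=k(X)$, which gives $\mathcal{L}_k(X,k(X))=k(X)-k(F(X,k(X)))$. The step that requires a little care---and the one I would flag as the crux---is rewriting this difference of $k$-values as a statement about $F(X,k(X))-X$. Because $k$ is the affine decreasing function $k(X)=K_2-\frac{\alpha_2 K_2}{r_2}X$, I have exactly $k(X)-k(F(X,k(X)))=\frac{\alpha_2 K_2}{r_2}\bigl(F(X,k(X))-X\bigr)$, trading the $k$-difference for a positive constant times the $F$-increment. Substituting $Y=k(X)$ into \eqref{SgnDeltax} then yields
\[
F(X,k(X))-X=\frac{\alpha_1 X}{1+\frac{r_1}{K_1}X+\alpha_1 k(X)}\bigl(h(X)-k(X)\bigr),
\]
so that
\[
\mathcal{L}_k(X,k(X))=\frac{\alpha_2 K_2}{r_2}\cdot\frac{\alpha_1 X}{1+\frac{r_1}{K_1}X+\alpha_1 k(X)}\bigl(h(X)-k(X)\bigr).
\]
For $X\in\left(0,\frac{r_2}{\alpha_2}\right)$ one has $k(X)>0$, making both prefactors strictly positive, so the sign of $\mathcal{L}_k(X,k(X))$ equals the sign of $h(X)-k(X)$.

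Finally I would combine the two computations. Under hypothesis a), $h(X)<k(X)$ forces $k(X)-h(X)>0$ and $h(X)-k(X)<0$, giving $\mathcal{L}_h(X,h(X))>0$ and $\mathcal{L}_k(X,k(X))<0$; under hypothesis b) the inequalities reverse, giving $\mathcal{L}_h(X,h(X))<0$ and $\mathcal{L}_k(X,k(X))>0$. I do not anticipate any genuine obstacle: the only non-routine point is recognizing that the linearity of $k$ converts $k(X)-k(F(X,k(X)))$ into a sign-preserving multiple of $F(X,k(X))-X$, after which everything is a direct application of the factorizations \eqref{SgnDeltax} and \eqref{SgnDeltay}, with the stated $X$-intervals being precisely what is needed to keep the prefactors strictly positive.
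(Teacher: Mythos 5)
Your proof is correct and follows essentially the same route as the paper's: reduce $\mathcal{L}_h(X,h(X))$ to $G(X,h(X))-h(X)$ and read off its sign from \eqref{SgnDeltay}, then reduce $\mathcal{L}_k(X,k(X))$ to $k(X)-k(F(X,k(X)))$ and control that via the sign of $F(X,k(X))-X$ from \eqref{SgnDeltax}. The only cosmetic difference is that at the last step you invoke the explicit linearity of $k$ to get a positive constant multiple, whereas the paper only uses that $k$ is decreasing; both close the argument identically.
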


\begin{proof} 
Assume that $X>0$ and $Y>0$.   We only prove {\it a)}, since the argument for  {\it b)}   is similar. Assume therefore that $h(X)<k(X)$ and first that  $X\in(0,K_1)$. Then, $0<h(X)$. By Definition~\ref{def:L_ell},
 $\mathcal{L}_{h}(X,h(X))= G(X,h(X))-h(F(X,h(X)))=G(X,h(X))-h(X)$.
Since  $0<h(X)<k(X)$,    by \eqref{SgnDeltay},  $G(X,h(X))-h(X))>0$, i.e., $\mathcal{L}_{h}(X,h(X))>0$.

Next, assume that  $X\in(0,\frac{r_2}{\alpha_2})$.  Then $k(X)>\max\{0,h(X)\}\geq 0$.
Thus, by \eqref{SgnDeltax}, $F(X,k(X))<X$. 
Since $k(X)$ is a decreasing function of $X$,
$k(X)<k(F(X,k(X))$ and therefore
  $\mathcal{L}_{k}(X,k(X))= G(X,k(X))-k(F(X,k(X)))  =k(X)-k(F(X,k(X)))<0$.  Hence, both results in {\it a)} follow.
\end{proof}


For \eqref{Compete}, the next-iterate operators are of the form  
\begin{equation}\label{calcLs}
\begin{split}
    \mathcal{L}_h(X,Y)&=\frac{ N_h(X,Y)}{\alpha_1 (K_1 + r_1 X + \alpha_1 K_1 Y) (K_2 + \alpha_2 K_2 X + r_2 Y)},\\
    \mathcal{L}_k(X,Y)&=\frac{N_k(X,Y)}{r_2 (K_1 + r_1 X + \alpha_1 K_1 Y) (K_2 + \alpha_2 K_2 X + r_2 Y)},
\end{split}
\end{equation}
where $N_h(X,Y)$ and $N_k(X,Y)$ are quadratic polynomials in $X$ and $Y$. The precise expressions are provided in  Appendix \ref{PfcalcLs} with the expressions for the root-sets and root-curves.

 The proof of the following Lemma is based on the fact that if a point $(X,Y)$ is in both root-sets, that is, $(X,Y)\in S_k\cap S_h$, then this point is mapped  directly to an equilibrium. The details are provided in Appendix \ref{PfrootintersectD1}.

\begin{lemma}\label{rootintersectD1}
 If $C_{12}\cdot C_{21}>0$,
 then 
 $$S_k\cap S_h \cap \{(X,Y)\colon \, 0<X\leq X^*,\, \,  0<Y\leq Y^*\}=E^*$$
  and 
$$S_k\cap S_h \cap \{(X,Y)\colon \,  X\geq X^*, \, \, Y\geq Y^*\}=E^*,$$
where $E^*=(X^*,Y^*)$ is the coexistence equilibrium given in \eqref{coexequ}.
\end{lemma}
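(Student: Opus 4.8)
The plan is to identify $S_h\cap S_k$ with the set of preimages of the coexistence equilibrium under the map \eqref{Compete}, and then to show this preimage is a single point. Write $T(X,Y)=(F(X,Y),G(X,Y))$. By Definition~\ref{def:L_ell}, a point lies in $S_h$ precisely when $G(X,Y)=h(F(X,Y))$, i.e. when its image $T(X,Y)$ lies on the line $Y=h(X)$; likewise $S_k$ consists of the points whose image lies on $Y=k(X)$. Since taking preimages commutes with intersection, $S_h\cap S_k=T^{-1}\big(\{Y=h(X)\}\cap\{Y=k(X)\}\big)$. This is the concrete form of the observation preceding the statement (and of Remark~\ref{intersectgen}): lying in both root-sets means being mapped onto the intersection of the two nontrivial nullclines.

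Next I would invoke the hypothesis $C_{12}\cdot C_{21}>0$. The curves $h$ and $k$ are two distinct, non-parallel lines, so they meet in exactly one point, and \eqref{coexequ} shows that under $C_{12}\cdot C_{21}>0$ this point is the coexistence equilibrium $E^*=(X^*,Y^*)$, lying in the open first quadrant. Hence $S_h\cap S_k=T^{-1}(\{E^*\})$, the set of preimages of $E^*$.

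The crux is to compute $T^{-1}(\{E^*\})$. I would impose $F(X,Y)=X^*$ and $G(X,Y)=Y^*$ and clear the denominators $1+\tfrac{r_1}{K_1}X+\alpha_1 Y$ and $1+\tfrac{r_2}{K_2}Y+\alpha_2 X$, which are strictly positive for $X,Y\ge 0$. Each resulting equation is \emph{linear} in $(X,Y)$, so the preimage is the solution set of a $2\times 2$ linear system. Using the fixed-point relations $\tfrac{r_1}{K_1}X^*=r_1-\alpha_1 Y^*$ and $\tfrac{r_2}{K_2}Y^*=r_2-\alpha_2 X^*$ (equivalently $Y^*=h(X^*)=k(X^*)$) to rewrite the coefficients, the coefficient matrix becomes $\left(\begin{smallmatrix}1+\alpha_1 Y^* & -\alpha_1 X^*\\ -\alpha_2 Y^* & 1+\alpha_2 X^*\end{smallmatrix}\right)$, whose determinant simplifies, after the $\alpha_1\alpha_2 X^*Y^*$ terms cancel, to $1+\alpha_1 Y^*+\alpha_2 X^*>0$. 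Since the system is nonsingular and $E^*$ is manifestly a solution (it is a fixed point of $T$), the unique preimage is $E^*$ itself. Thus in fact $S_h\cap S_k=\{E^*\}$ throughout the open first quadrant, and a fortiori its intersection with either of the two regions in the statement, both of which contain $E^*$, equals $E^*$.

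The step I expect to be the main obstacle is verifying the non-degeneracy of this linear system: a priori the determinant is a difference $(1+\alpha_1 Y^*)(1+\alpha_2 X^*)-\alpha_1\alpha_2 X^*Y^*$ of two positive quantities and could vanish, and only after expanding and noting the cancellation of the product term does one see it is strictly positive. An alternative, more computational route works directly with the quadratic numerators $N_h,N_k$ of \eqref{calcLs}; two conics can meet in up to four points by B\'ezout, and there the two regions $\{0<X\le X^*,\,0<Y\le Y^*\}$ and $\{X\ge X^*,\,Y\ge Y^*\}$ together with positivity of the denominators are genuinely used to discard the extraneous intersection points. The preimage viewpoint above avoids this bookkeeping entirely.
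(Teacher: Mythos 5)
Your proposal is correct, and it shares the paper's point of departure but finishes differently. Like the paper, you reduce membership in $S_h\cap S_k$ to the two equations $F(\bar X,\bar Y)=X^*$, $G(\bar X,\bar Y)=Y^*$ (this is exactly the appeal to Remark~\ref{intersectgen}). From there the paper argues by contradiction inside the two corner regions: writing $w=X^*\bar Y-\bar X Y^*$, the two equations force $\bar X-X^*=\alpha_1 w$ and $\bar Y-Y^*=-\alpha_2 w$, and these have incompatible signs when $(\bar X,\bar Y)$ lies in $\{0<X\le X^*,\,0<Y\le Y^*\}\setminus\{E^*\}$ or $\{X\ge X^*,\,Y\ge Y^*\}\setminus\{E^*\}$. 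You instead clear the (positive) denominators and solve the resulting $2\times 2$ linear system; your coefficient matrix and the determinant $1+\alpha_1 Y^*+\alpha_2 X^*>0$ check out (equivalently, substituting the paper's two relations into the definition of $w$ gives $w(1+\alpha_1 Y^*+\alpha_2 X^*)=0$, hence $w=0$). The payoff is that your argument yields the stronger conclusion $S_h\cap S_k\cap\{X,Y>0\}=\{E^*\}$, i.e.\ $E^*$ has no other preimage in the open first quadrant, whereas the paper's sign argument only excludes extra intersection points in the two stated corner regions (it leaves open, without the determinant computation, the possibility of preimages in the off-diagonal quadrants relative to $E^*$, which is presumably why the lemma is phrased region by region). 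Both arguments are valid and both use the hypothesis $C_{12}\cdot C_{21}>0$ only to guarantee that the nontrivial nullclines meet in the single point $E^*$; your closing remark about the conic/B\'ezout route correctly identifies the bookkeeping that the preimage viewpoint avoids.
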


Lemma \ref{rootintersectD1} implies that root-curves associated with the nullclines $Y=h(X)$ and $Y=k(X)$ cannot intersect in 
$\{(X,Y)\colon \, 0<X\leq X^*, \, \, 0<Y\leq Y^*\}\backslash E^*$
 or in $\{(X,Y)\colon \, X\geq X^*, \, \, Y\geq Y^*\}\backslash E^*$.

\subsection{Global Analysis of (\ref{Compete}) using the Augmented Phase Portrait}

In this section, we illustrate how to use the augmented phase portrait to obtain the global dynamics of \eqref{Compete} based on the signs of the competitive efficiencies defined in \eqref{def:Cij}. However, first we provide some preliminary results.  

\begin{theorem}\label{thm:E0}
Consider \eqref{Compete} with $X_0,Y_0\geq 0$  and $X_0Y_0=0$.
\begin{itemize}
    \item[a)] If $X_0=Y_0=0$, then  $(X_t,Y_t)=E_0$, for all $t\geq 0$.     \item[b)] If $X_0=0$ and $Y_0>0$, then $\lim_{t\to \infty}(X_t,Y_t)=E_{2}$.
    \item[c)] If $X_0>0$ and $Y_0=0$, then $\lim_{t\to \infty}(X_t,Y_t)=E_{1}$.
\end{itemize}
\end{theorem}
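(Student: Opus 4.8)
The plan is to exploit the invariance of the coordinate axes to collapse each case onto a scalar recurrence and then analyze that one–dimensional map directly; the augmented–phase–portrait machinery is not needed for this boundary result. Part a) follows immediately by substitution, since $F(0,0)=0$ and $G(0,0)=0$ show that $E_0$ is a fixed point, so $(X_t,Y_t)=E_0$ for all $t\geq 0$.

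For part b), note that $F(0,Y)=\frac{1+r_1}{1+\alpha_1 Y}\cdot 0=0$, so the line $X=0$ is invariant (this is the Axiom of Parenthood already recorded above), giving $X_t=0$ for all $t$ and reducing the system to the scalar map
\[
Y_{t+1}=g(Y_t):=G(0,Y_t)=\frac{(1+r_2)Y_t}{1+\frac{r_2}{K_2}Y_t}.
\]
I would first record that $g$ is a Beverton--Holt map: solving $g(Y)=Y$ yields exactly the two fixed points $Y=0$ and $Y=K_2$, and differentiating gives $g'(Y)=\frac{1+r_2}{\left(1+\frac{r_2}{K_2}Y\right)^2}>0$, so $g$ is strictly increasing on $[0,\infty)$. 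Evaluating \eqref{SgnDeltay} at $X=0$ (where $k(0)=K_2$) gives $g(Y)-Y=\frac{(r_2/K_2)Y}{1+(r_2/K_2)Y}\,(K_2-Y)$, which is positive on $(0,K_2)$, zero at $K_2$, and negative on $(K_2,\infty)$.

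I would then conclude by monotone convergence. Because $g$ is increasing and fixes $K_2$, the intervals $(0,K_2)$ and $(K_2,\infty)$ are each forward invariant; combined with the sign of $g(Y)-Y$ this forces the orbit to be strictly increasing and bounded above by $K_2$ when $0<Y_0<K_2$, strictly decreasing and bounded below by $K_2$ when $Y_0>K_2$, and constant when $Y_0=K_2$. In every case $Y_t$ is monotone and bounded, hence convergent, and its limit must be a fixed point of $g$; this forces $\lim_{t\to\infty}Y_t=K_2$, so $(X_t,Y_t)\to(0,K_2)=E_2$. Part c) is entirely symmetric: invariance of $Y=0$ gives $Y_t\equiv 0$ and reduces the system to $X_{t+1}=F(X_t,0)=\frac{(1+r_1)X_t}{1+\frac{r_1}{K_1}X_t}$, the Beverton--Holt map with carrying capacity $K_1$, and the analogous sign computation from \eqref{SgnDeltax} at $Y=0$ (whose sign is governed by $K_1-X$) yields $X_t\to K_1$, i.e. $(X_t,Y_t)\to E_1$.

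There is no serious obstacle here; the only substantive point is the global convergence of the scalar Beverton--Holt map, which is standard. If a fully self-contained argument is preferred over the monotone--cobweb reasoning, the substitution $u_t=1/Y_t$ linearizes the recurrence to $u_{t+1}=\frac{1}{1+r_2}u_t+\frac{r_2/K_2}{1+r_2}$, whose unique fixed point $1/K_2$ is globally attracting because the contraction factor $\frac{1}{1+r_2}$ lies in $(0,1)$; this gives the explicit solution and the limit $Y_t\to K_2$ at once.
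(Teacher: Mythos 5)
Your proof is correct and follows essentially the same route the paper indicates: the paper omits the proof, noting that it ``follows immediately from the structure of \eqref{Compete} and the well-known results for the Beverton--Holt model,'' which is precisely the reduction to the scalar Beverton--Holt map on each invariant axis that you carry out in detail. Your monotone-convergence argument (and the linearizing substitution $u_t=1/Y_t$) simply makes explicit the standard Beverton--Holt convergence the authors cite.
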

The proof  is omitted, since it follows immediately from the structure of \eqref{Compete} and  the well-known results for the Beverton-Holt model (see \cite[Section 3.2]{Allen2007}).  This  theorem could  also  be proved using the augmented phase portrait approach, since the root-curves associated with each trivial nullcline coincides with its nullcline. This implies that the trivial nullclines, i.e., the $X$ and $Y$ axes, are invariant and also that the  interior of the first quadrant is invariant.


\subsubsection{Case I: $C_{12}=C_{21}=0$} 

By the definition   of $C_{ij}$ in \eqref{def:Cij}, $\frac{r_1}{\alpha_1}=K_2$ and $\frac{r_2}{\alpha_2}=K_1$, and so $h(X)=k(X)$, for all $X\in \mathbb{R}$.

The standard phase portrait  determined from 
 \eqref{SgnDeltax} and  \eqref{SgnDeltay} is shown in  
Fig.~\ref{genDFC1}a). Two regions  of component-wise monotonicity in $\mathbb{R}_+^2=(0,\infty)^2$ are identified:
\begin{equation*}
 \mathcal{R}_1=\left\{(X,Y)\in \mathbb{R}_+^2\colon Y< h(X)=k(X)\right\} \quad 
 \mathcal{R}_2=\left\{(X,Y)\in \mathbb{R}_+^2 \colon h(X)=k(X)<Y\right\}.
\end{equation*}

\begin{figure}[!ht]
    \centering
    (a) \hspace{72mm} (b)
     \includegraphics[scale=.35]{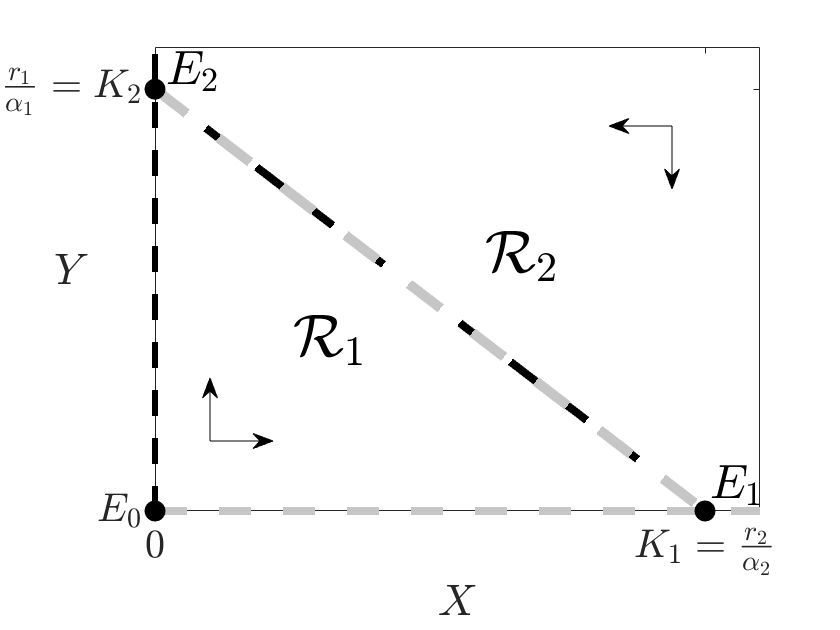}
     \includegraphics[scale=.35]{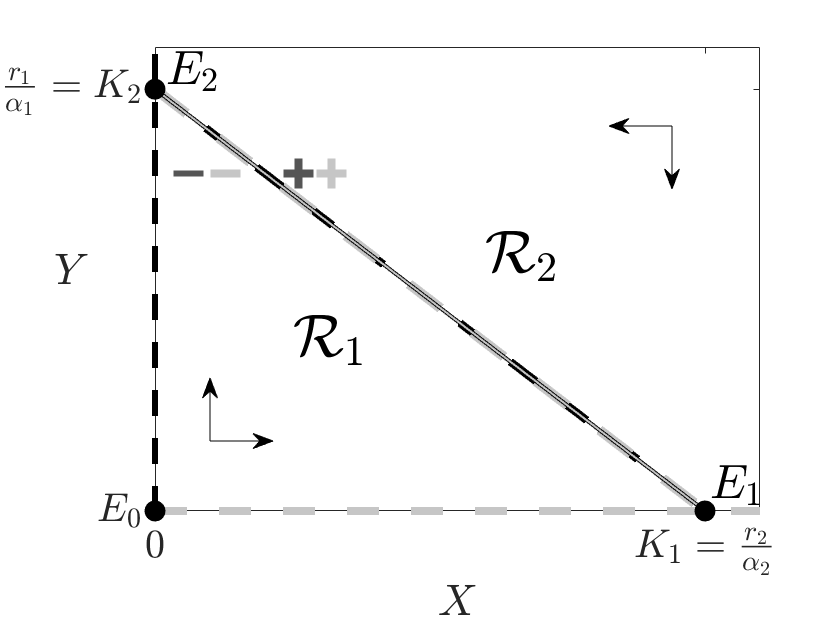}
    \caption{a) The  standard phase portrait in the case when $C_{12}=C_{21}=0$. Since $h(X)=k(X)$, the nontrivial $X$ and $Y$ nullclines coincide and result in the line of equilibria,  $\mathcal{E}^*_X$. b) The graph in a)  augmented by including  the root-curves associated with the  nontrivial nullclines for competitors $X$ and $Y$,  that overlap their nullclines in this case, and the signs of the next-iterate operators in the regions separated by the root-curves. Each root-curve  associated with its nontrivial nullcline is identical to its nontrivial nullcline. Since the next-iterate operators are both negative in $\mathcal{R}_1$, any point in $\mathcal{R}_1$ is mapped below the competitor $X$ and competitor $Y$ nullclines and therefore remains in $\mathcal{R}_1$. Thus, $\mathcal{R}_1$ is  invariant. Similarly, the positive signs of both next-iterate root-operators in $\mathcal{R}_2$ imply that a point in $\mathcal{R}_2$ is mapped to a point above both nontrivial nullclines and therefore remains in $\mathcal{R}_2$. Thus, $\mathcal{R}_2$ is also invariant.}
    \label{genDFC1}
\end{figure}

\noindent In this case, the standard phase portrait alone cannot be used to prove the stability of the equilibria in 
$\mathcal{E}_X^*$. We need
to use the augmented phase portrait that includes the signs of the next-iterate operators associated with the nullclines to first prove that orbits cannot jump back and forth across the nullclines.


The proof of the following result is due to \eqref{calcLs} (see the details in Appendix \ref{PfLhLkCase1}. 

\begin{lemma}\label{LhLkCase1}
Assume $C_{12}=C_{21}=0$. 
\begin{align}
    \mathcal{L}_h(X,Y)&\quad \begin{cases}
 \quad    <\quad 0,\quad & \quad \mbox{if }  \,(X,Y)\in \mathcal{R}_1, \\
 \quad    >\quad 0, \quad &  \quad \mbox{if }  \,(X,Y)\in \mathcal{R}_2 .\end{cases} \label{signC1hb} \\
\mathcal{L}_k(X,Y)&\quad \begin{cases} \quad < \quad 0,\quad & \, \mbox{if} \quad (X,Y)\in \mathcal{R}_1,\\
\quad >\quad 0, \quad &\, \mbox{if} \quad (X,Y)\in \mathcal{R}_2.\end{cases}\label{signC1kb}
\end{align}
\end{lemma}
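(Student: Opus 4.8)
The plan is to first collapse the two statements into one. Because $C_{12}=C_{21}=0$ forces $\frac{r_1}{\alpha_1}=K_2$ and $\frac{r_2}{\alpha_2}=K_1$ by \eqref{def:Cij}, the functions $h$ and $k$ of \eqref{eq:Linex} and \eqref{Liney} coincide identically on $\mathbb{R}$. Hence $h(F(X,Y))=k(F(X,Y))$ for every $(X,Y)$, and by Definition~\ref{def:L_ell} the two operators are literally equal, $\mathcal{L}_h(X,Y)=G(X,Y)-h(F(X,Y))=G(X,Y)-k(F(X,Y))=\mathcal{L}_k(X,Y)$. Thus \eqref{signC1hb} and \eqref{signC1kb} are the same assertion, and it suffices to determine the sign of $\mathcal{L}_h$. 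Furthermore, by \eqref{calcLs} the denominator of $\mathcal{L}_h$ is a product of strictly positive factors for $X,Y\ge 0$, so the sign of $\mathcal{L}_h$ equals the sign of its quadratic numerator $N_h$, and I only need to analyze $N_h$ on $\mathbb{R}_+^2$.

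The main structural observation is that when $h\equiv k$ the whole line $Y=h(X)$ is a line of fixed points, so $\mathcal{L}_h$ vanishes on it. Indeed, $Y=h(X)$ is the $X$-nullcline, so $F(X,h(X))=X$; and because $h(X)=k(X)$, evaluating \eqref{SgnDeltay} along $Y=h(X)$ gives $G(X,h(X))=h(X)$. Therefore $\mathcal{L}_h(X,h(X))=G(X,h(X))-h(F(X,h(X)))=h(X)-h(X)=0$ for all $X$ (the degenerate instance of Lemma~\ref{Lemnoint} in which $\mathcal{E}_h$ is the entire line). Since $N_h=\mathcal{L}_h\cdot(\text{denominator})$ then vanishes identically on the line $Y=h(X)$, and $N_h$ is a polynomial, the linear form $\ell_0(X,Y):=\alpha_1 K_1 Y+r_1 X-r_1 K_1=\alpha_1 K_1\,(Y-h(X))$ divides $N_h$. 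Note that $\ell_0$ is negative precisely on $\mathcal{R}_1$ and positive precisely on $\mathcal{R}_2$.

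Writing $N_h=\ell_0(X,Y)\,L(X,Y)$, a degree count forces the cofactor $L$ to be linear, and since $\mathcal{L}_h$ genuinely changes sign across the nullcline (as one checks from \eqref{LhX0} and \eqref{LhY0}) the zero is simple, so $L$ does not vanish on the line. The sign of $\mathcal{L}_h$ is then the product of the sign of $\ell_0$ and the sign of $L$, so the lemma reduces to the single claim that $L(X,Y)>0$ on $\mathbb{R}_+^2$. This final positivity is where the real work lies: I expect to establish it either by carrying out the polynomial division to exhibit $L$ explicitly and checking that its coefficients are nonnegative with positive constant term, or, more economically, by verifying $L>0$ separately along the two axes and invoking the fact that a linear function positive for all $X\ge 0$ on $\{Y=0\}$ and for all $Y\ge 0$ on $\{X=0\}$ is automatically positive on the whole closed first quadrant. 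The boundary computations provide a consistency check: $\mathcal{L}_h(X,0)=-h(F(X,0))<0$ on $\mathcal{R}_1\cap\{Y=0\}$ by \eqref{LhX0} (since $F(X,0)<K_1$ there), while $\mathcal{L}_h(0,Y)=G(0,Y)-K_2>0$ on $\mathcal{R}_2\cap\{X=0\}$ by \eqref{LhY0}. Once $L>0$ is confirmed, $\mathcal{L}_h<0$ on $\mathcal{R}_1$ and $\mathcal{L}_h>0$ on $\mathcal{R}_2$, which is \eqref{signC1hb}, and by the opening remark also \eqref{signC1kb}. The only real obstacle is thus the algebra certifying positivity of the linear cofactor $L$, exactly the computation deferred to Appendix~\ref{PfLhLkCase1}.
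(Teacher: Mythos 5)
Your argument is correct and ultimately rests on the same computational fact as the paper's proof in Appendix~\ref{PfLhLkCase1}, but it is organized along a genuinely different route with two real simplifications. The paper treats $\mathcal{L}_h$ and $\mathcal{L}_k$ separately: it substitutes $K_2=\tfrac{r_1}{\alpha_1}$ and $K_1=\tfrac{r_2}{\alpha_2}$ into \eqref{CoefLh} and \eqref{CoefLkiny}, checks $a_2>0$ and $B_2>0$, and reads off from \eqref{rh} and \eqref{Rk} that each operator has exactly one root-curve meeting the first quadrant, namely the common nullcline itself ($r_{h_1}=h$ and $R_{k_1}=k^{-1}$), the other root being negative; the signs then follow from \eqref{LhX0} and \eqref{LkX0}. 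You instead note at the outset that $h\equiv k$ forces $\mathcal{L}_h\equiv\mathcal{L}_k$, collapsing the two claims into one (a shortcut the paper does not exploit -- it even parametrizes the $k$ root-curves differently, as $X=R_k(Y)$), and you obtain the factor $Y-h(X)$ of $N_h$ conceptually from the line of equilibria $\mathcal{E}_X^*$ rather than from the quadratic formula. Your remaining obligation, positivity of the affine cofactor $L$ on the closed first quadrant, is precisely the paper's computation that $r_{h_2}(X)=-(\alpha_2X+1)/\alpha_1<0$, i.e.\ $N_h=a_2\bigl(Y-h(X)\bigr)\bigl(Y-r_{h_2}(X)\bigr)$ with $a_2>0$, so both of your proposed finishes go through; for the axis route just note that at $X=K_1$ on the $X$-axis and $Y=K_2$ on the $Y$-axis both $\ell_0$ and $\mathcal{L}_h$ vanish, so positivity of $L$ there must come from the affineness of $L$ restricted to the axis rather than from dividing the signs. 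The only loose step is inferring that ``$L$ does not vanish on the line'' from a sign change of $\mathcal{L}_h$ at a single boundary point; that intermediate claim is unnecessary once $L>0$ on all of $\mathbb{R}_+^2$ is established, so I would simply drop it.
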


Including the sign of the next-iterate operator in  the regions separated by the root-curves, we obtain  the augmented phase portrait shown in  Fig.~\ref{genDFC1}b), from which it follows immediately that orbits cannot jump between regions  $\mathcal{R}_1$ and $\mathcal{R}_2$, i.e., each of these regions is invariant.

\begin{theorem}\label{Thm:hequalk}
If  $C_{12}=C_{21}=0$, then  every equilibrium point $(\widehat{X},\widehat{Y})\in \mathcal{E}_X^*$ is a stable equilibrium and any orbit with $(X_0,Y_0)\neq (0,0)$ converges to a point in $\mathcal{E}_X^*$.
\end{theorem}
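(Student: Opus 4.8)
The plan is to combine the invariance of the two regions $\mathcal{R}_1$ and $\mathcal{R}_2$, established through the augmented phase portrait in Lemma~\ref{LhLkCase1}, with the component-wise monotonicity of orbits read off from the direction field \eqref{SgnDeltax}--\eqref{SgnDeltay}, and then to add a separate trapping argument for Lyapunov stability. Orbits on the coordinate axes are already covered by Theorem~\ref{thm:E0} (they converge to $E_1,E_2\in\mathcal{E}_X^*$), so I would reduce to interior data $X_0,Y_0>0$, for which the orbit stays in the open first quadrant. A point already on $\mathcal{E}_X^*$ is an equilibrium and converges to itself, leaving only the two open regions to treat for convergence.

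For convergence, suppose first $(X_0,Y_0)\in\mathcal{R}_1$. By Lemma~\ref{LhLkCase1} the signs of $\mathcal{L}_h$ and $\mathcal{L}_k$ keep the orbit in $\mathcal{R}_1$, so $Y_t<h(X_t)=k(X_t)$ for all $t$ and hence $\mathcal{R}_1\subseteq(0,K_1)\times(0,K_2)$ bounds the orbit. On $\mathcal{R}_1$ one has $Y<h(X)$ and $Y<k(X)$, so \eqref{SgnDeltax} and \eqref{SgnDeltay} give $X_{t+1}>X_t$ and $Y_{t+1}>Y_t$; each coordinate is monotone and bounded, hence convergent, and by continuity of $F,G$ the limit $(X_\infty,Y_\infty)$ is a fixed point of \eqref{Compete}. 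Since $X_\infty\ge X_0>0$ and $Y_\infty\ge Y_0>0$, the limit is not $E_0$, so it lies on $\mathcal{E}_X^*$. The case $(X_0,Y_0)\in\mathcal{R}_2$ is symmetric: the orbit stays in $\mathcal{R}_2$ with $X_t,Y_t$ decreasing and bounded below by $0$, so it converges to a fixed point; staying above the line forces $Y_t>h(X_t)$, which excludes $E_0$ (near $E_0$ the line has height $h(0)=K_2>0$), so the limit again lies on $\mathcal{E}_X^*$.

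The harder part is stability, because $\mathcal{E}_X^*$ is a continuum of non-hyperbolic equilibria and linearization is inconclusive. Here I would exploit that \eqref{Compete} is competitive: from the explicit form, $F$ is increasing in $X$ and decreasing in $Y$, while $G$ is decreasing in $X$ and increasing in $Y$, so the map $T=(F,G)$ is monotone for the south-east order $(x_1,y_1)\preceq(x_2,y_2)\iff x_1\le x_2,\ y_1\ge y_2$, and since $h$ is strictly decreasing the equilibria along $\mathcal{E}_X^*$ are totally ordered by $\preceq$. For an interior equilibrium $(\widehat X,\widehat Y)$ with $0<\widehat X<K_1$, I would pick small $\delta>0$ and the neighbouring equilibria $P_\pm=(\widehat X\pm\delta,\,h(\widehat X\pm\delta))$; monotonicity of $T$ together with $T(P_\pm)=P_\pm$ shows that the order interval $[P_-,P_+]_\preceq$, which is precisely the box $[\widehat X-\delta,\widehat X+\delta]\times[h(\widehat X+\delta),h(\widehat X-\delta)]$, is invariant and contains $(\widehat X,\widehat Y)$ in its interior. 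Letting $\delta\to0$ shrinks these invariant boxes to the point, which yields stability.

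The endpoints $E_1=(K_1,0)$ and $E_2=(0,K_2)$ are exactly where the clean equilibrium-sandwich fails, and I expect this to be the main obstacle. For $E_1$ I would replace the right-hand equilibrium by the corner $P_+=(K_1+\delta,0)$ on the invariant $X$-axis: because the axis dynamics is Beverton--Holt with fixed point $K_1$, one has $F(K_1+\delta,0)<K_1+\delta$, so $T(P_+)\preceq P_+$, i.e.\ $P_+$ is a super-equilibrium, while $P_-=(K_1-\delta,h(K_1-\delta))$ is an equilibrium. Monotonicity then keeps the box $[K_1-\delta,K_1+\delta]\times[0,h(K_1-\delta)]$ invariant, and it is a relative neighbourhood of $E_1$ that shrinks to $E_1$ as $\delta\to0$; the argument for $E_2$ is symmetric using the corner $(0,K_2+\delta)$ on the $Y$-axis. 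Once stability and the convergence to $\mathcal{E}_X^*$ are in hand, the statement follows.
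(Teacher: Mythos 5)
Your proposal is correct, and the convergence half is essentially the paper's (implicit) argument: invariance of $\mathcal{R}_1$ and $\mathcal{R}_2$ from Lemma~\ref{LhLkCase1}, component-wise monotonicity from the direction field, boundedness, and exclusion of $E_0$. Where you genuinely diverge is the stability half. You trap the orbit in the order interval $[P_-,P_+]_\preceq$ for the south-east order, using monotonicity of $T=(F,G)$ and the fact that $P_\pm$ are fixed points (or, at $E_1$, $E_2$, that the corner on the axis is a super-equilibrium). The paper constructs \emph{the very same rectangle} --- its $V$ has upper-left and lower-right corners on $Y=h(X)$, which is exactly your box $[\widehat X-\delta,\widehat X+\delta]\times[h(\widehat X+\delta),h(\widehat X-\delta)]$ --- but justifies its positive invariance differently: a point of $V$ below the nullcline stays below it by the sign of the next-iterate operators, and then the direction field (both coordinates increase below, decrease above) together with the corner placement and the decrease of $h$ forces the image back into $V$; no order-theoretic structure is invoked. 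Your route requires checking $\partial_Y F<0$ and $\partial_X G<0$ (which do hold here), and it is clean and arguably shorter, especially your super-equilibrium treatment of the endpoints $E_1$, $E_2$. The trade-off is that it leans on precisely the competitive-monotonicity machinery (as in \cite{Liu2001}) that the paper is advertising an elementary alternative to: the paper's point is that the augmented phase portrait alone --- signs of $\mathcal{L}_h$, $\mathcal{L}_k$ plus the direction field --- already yields the invariant rectangle, and that argument transfers to maps that are not order-preserving, whereas yours does not.
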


\begin{proof}
We use the augmented phase portrait shown in Fig.~\ref{fig:example}.
From the two `--' signs  in $\mathcal{R}_1$ and  two `+'  signs in $\mathcal{R}_2$, obtained from \eqref{signC1hb} and \eqref{signC1kb},  it follows immediately that both $\mathcal{R}_1$ and $\mathcal{R}_2$ are invariant. Select an arbitrary equilibrium point $(\widehat{X},\widehat{Y})\in \mathcal{E}^*_X \backslash (E_1\cup E_2)$ and   any open set, $U$, containing $(\widehat{X},\widehat{Y})$.
    There exists $\epsilon>0$ such that  $U_{\epsilon}=\{(X,Y)\in (0,\infty)^2 \, \colon \, \|(X,Y)-(\widehat{X},\widehat{Y})\|_{\infty} < \epsilon\} \subset U$.  Take the rectangle $V\subseteq U_{\epsilon}$ such that both its upper left corner and its lower right corner are on the nullcline $Y=h(X)$.  
     If $(\widehat{X},\widehat{Y})=E_1$ or $E_2$, select $U$ containing $(\widehat{X},\widehat{Y})$, open relative to $[0,\infty]^2$. If $(\widehat{X},\widehat{Y})=E_1$,
     let  $U_{\epsilon}=\{(X,Y)\in (0,\infty)\times [0,\infty)\, \colon \, \|(X,Y)-(\widehat{X},\widehat{Y})\|_{\infty} < \epsilon\} \subset U$ and take the rectangle $V\subseteq U_\epsilon$ with its upper left corner on the nullcline and if $(\widehat{X},\widehat{Y})=E_2$, let $U_{\epsilon}=\{(X,Y)\in [0,\infty)\times (0,\infty)\, \colon \, \|(X,Y)-(\widehat{X},\widehat{Y})\|_{\infty} < \epsilon\} \subset U$ and take a rectangle $V\subseteq U_\epsilon$ with its lower right corner on the nullcline. (If $K_1=K_2$, take $V=U_{\epsilon}$.) From the directions field, in all cases, the rectangle $V$ is positively invariant.
\end{proof}

\begin{figure}[ht!]
\hspace{9mm} a) \, $K_1>K_2$ \hspace{57mm} b)\,  $K_1<K_2$
\includegraphics[scale=0.45,trim = 1.2cm 1cm 1cm 0cm, clip]{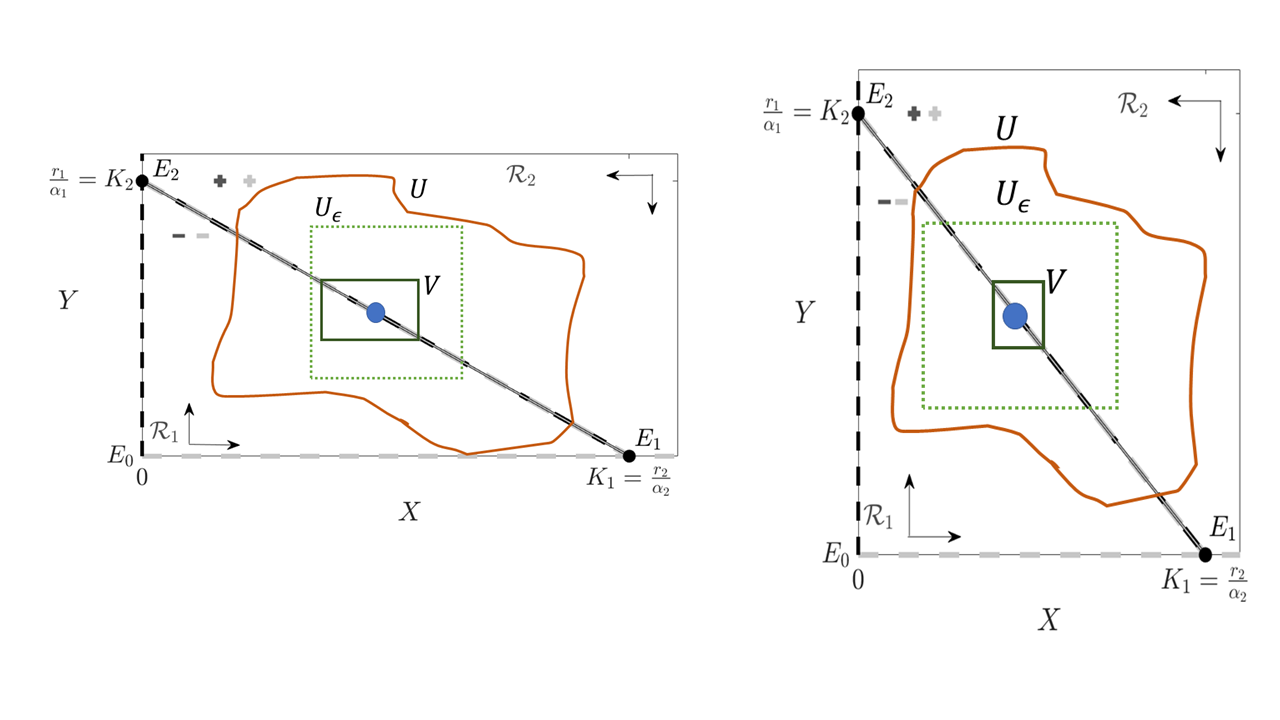}
    \caption{ Graphs illustrating  the use of the augmented phase portrait when $C_{12}=C_{21}=0$,
    to prove the local stability of  every $(\widehat{X}, \widehat{Y})\in \mathcal{E}_X^*$. See the proof of Theorem~\ref{Thm:hequalk}.
    }
    \label{fig:example}%
\end{figure}

\begin{remark}
In the case when $C_{12}=C_{21}=0$,  the eigenvalues of the Jacobian matrix evaluated at any $(X,h(X))\in E^*_X$,  $X\in[0,K_1]$,  are  $\lambda_1(X),\lambda_2(X)$ with
$$0<\lambda_1(X)=\frac{1 + r_1\left(1 - \frac{X}{K_1}\right) +\frac{X}{K_1}  r_2}{(1 + r_1) (1 + r_2)}<1 \qquad \mbox{and}  \qquad \lambda_2(X)=1.$$
 Determining the stability using the classical method of calculating the eigenvalues of the Jacobian matrix is therefore inconclusive. 
 Instead, the next theorem provides a global analysis using the  augmented phase portrait and the definition of a stable equilibrium point and does not require the  calculation of the eigenvalues.  It is the invariance of each of the regions $\mathcal{R}_1$
 and $\mathcal{R}_2$ that is the key ingredient. This can be determined from the augmented phase portrait, but not from the standard phase portrait. 
\end{remark}

\subsubsection{Case II:  $C_{12}C_{21}<0$} 

In this case, competitive efficiencies have opposite signs.  We will show that for all positive initial conditions, there is competitive exclusion, that is, the population with the positive competitive efficiency  wins the competition and drives the other competitor to extinction. 

For competitive efficiencies with opposite signs, the nontrivial nullclines of competitors $X$ and $Y$ do not intersect in the first quadrant and so there  is no coexistence equilibrium. We  only provide an analysis for the case when 
$C_{12}<0$ and $C_{21}>0$, i.e.,    $\frac{r_1}{\alpha_1}< K_2$ and $\frac{r_2}{\alpha_2}>K_1$.  The proofs in the case when  $C_{12}>0$ and $C_{21}<0$ follow by  interchanging the roles of $X$ and $Y$.

As in Case I, the standard phase portrait is not sufficient to determine the global dynamics. 
 For example, additional  information is required to rule out  orbits jumping between regions $\mathcal{R}_1$ and $\mathcal{R}_3$ or jumping out of $\mathcal{R}_2$, where:
\begin{align*}
\mathcal{R}_1&:=\{(X,Y)\in \mathbb{R}_+^2 \colon \, Y< h(X)\},\qquad 
\mathcal{R}_2:=\{(X,Y)\in \mathbb{R}_+^2 \colon \, h(X)\leq Y\leq k(X)\},\\
&\hspace{30mm} \mathcal{R}_3:=\{(X,Y)\in \mathbb{R}_+^2 \colon \,  k(X)<Y\}.
\end{align*}


To  prove this behavior does not occur,  we augment the standard phase   portrait  by including the signs of the next-iterate operators associated with the nontrivial nullclines. We then use the  augmented phase portrait, shown in Fig.~\ref{Fig:C2general}a), in the proof    that $E_2$ is globally asymptotically stable and $E_0$ and $E_1$ are unstable (see \ref{C2Thm1}).  But first we need some preliminary results.

 The proof of the next lemma  relies on \eqref{calcLs} and is provided in Appendix \ref{PfLemsignCase2}. 

\begin{samepage}
\begin{lemma}\label{LemsignCase2} Assume $C_{12}<0$ and $C_{21}>0$.
\begin{enumerate}
    \item[a)] $\mathcal{L}_h(X,Y) >0 $, for all $(X,Y)\in \mathcal{R}_2\cup \mathcal{R}_3$.
\item[b)]  $\mathcal{L}_k(X,Y) <0 $, for all $(X,Y)\in \mathcal{R}_1\cup \mathcal{R}_2$.
\end{enumerate}

\end{lemma}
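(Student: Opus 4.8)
The plan is to reduce both sign statements to the sign of a single--variable quadratic. By \eqref{calcLs} the denominators of $\mathcal{L}_h$ and $\mathcal{L}_k$ are strictly positive on $\{X\ge 0,\ Y\ge 0\}$ (all parameters are positive), and $\alpha_1,r_2>0$; hence $\operatorname{sgn}\mathcal{L}_h=\operatorname{sgn}N_h$ and $\operatorname{sgn}\mathcal{L}_k=\operatorname{sgn}N_k$ throughout the closed first quadrant. Since $h$ and $k$ are affine, $N_h$ and $N_k$ are genuine quadratics, and I would freeze $X$ and treat each as a parabola in $Y$, writing $N_h(X,Y)=a_hY^2+b_h(X)Y+c_h(X)$ and similarly for $N_k$. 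The entire argument then amounts to locating the roots of these parabolas relative to the region, using three pieces of anchor data: the sign of the leading coefficient, the value on the axis $Y=0$ (from \eqref{LhX0} and \eqref{LkX0}), and the value on the associated nullcline (from Lemma~\ref{usefulhk}).

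For part (b) I expect this to close immediately. A short computation gives $a_k>0$ for all parameter values. The only $X$--range meeting $\mathcal{R}_1\cup\mathcal{R}_2$ is $X\in(0,r_2/\alpha_2)$, since $k(X)>0$ exactly there; and for such $X$ the constant term is negative: by \eqref{LkX0} one has $\mathcal{L}_k(X,0)=-k(F(X,0))$, while $C_{21}>0$ forces $F(X,0)<r_2/\alpha_2$ (because $F(\cdot,0)$ is a Beverton--Holt map with fixed point $K_1<r_2/\alpha_2$, so $F(X,0)\le\max\{X,K_1\}<r_2/\alpha_2$), whence $k(F(X,0))>0$. Thus the two roots of $N_k(X,\cdot)$ straddle $0$. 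Lemma~\ref{usefulhk}(a) gives $\mathcal{L}_k(X,k(X))<0$, so $k(X)$ also lies strictly between the two roots; as $k(X)>0$, the whole interval $(0,k(X)]$ sits between the roots, where the upward parabola is negative. Hence $\mathcal{L}_k<0$ on $\mathcal{R}_1\cup\mathcal{R}_2$.

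For part (a) the same bookkeeping handles $X\in(0,K_1)$ cleanly but leaves a genuine gap for $X\ge K_1$, which I regard as the main obstacle. First, $a_h>0$ is equivalent to $\alpha_1K_2(1+r_2)>r_1r_2$, which follows from $C_{12}<0$ (i.e.\ $\alpha_1K_2>r_1$); this is exactly where the hypothesis $C_{12}<0$ enters. For $X\in(0,K_1)$, \eqref{LhX0} gives $c_h(X)=N_h(X,0)<0$, so the roots straddle $0$, while Lemma~\ref{usefulhk}(a) gives $N_h(X,h(X))>0$ with $h(X)>0$; the upward parabola therefore has its larger root below $h(X)$, and $N_h(X,Y)>0$ for every $Y\ge h(X)$. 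For $X\ge K_1$, however, $F(X,0)\ge K_1$ makes $c_h(X)=-h(F(X,0))\ge 0$, so the roots no longer straddle $0$ and I can no longer anchor on the nullcline (which has dropped to $Y\le 0$); instead I must rule out a pair of positive roots directly.

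The device for closing that gap is the linear coefficient $b_h(X)$. I would compute that $b_h$ is affine in $X$ with nonnegative slope and with $b_h(K_1)>0$, the latter reducing to a positive multiple of $1+r_2+r_1(r_2-\alpha_2K_1)$, which is positive because $C_{21}>0$ (i.e.\ $r_2>\alpha_2K_1$). Hence $b_h(X)>0$ for all $X\ge K_1$; together with $a_h>0$ and $c_h(X)\ge 0$ this gives the parabola $N_h(X,\cdot)$ a nonpositive vertex abscissa and only nonpositive roots (if any are real), so $N_h(X,Y)>0$ for all $Y>0$. This completes $\mathcal{R}_2\cup\mathcal{R}_3$ and shows that both competitive--efficiency hypotheses are needed, $C_{12}<0$ for $a_h>0$ and $C_{21}>0$ for $b_h(K_1)>0$. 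A more geometric alternative would invoke Lemma~\ref{Lemnoint} (so that the root--set $S_h$ meets $Y=h(X)$ only at the equilibrium $E_1$) together with positivity of $\mathcal{L}_h$ on the remaining boundary of $\mathcal{R}_2\cup\mathcal{R}_3$ and connectedness; but excluding a closed oval of $S_h$ inside the region appears to require the same coefficient information, so I would present the direct quadratic argument, consistent with the statement that the proof relies on \eqref{calcLs}.
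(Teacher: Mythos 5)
Your proposal is correct and follows essentially the same route as the paper's Appendix proof: both reduce to the sign of the numerator quadratics from \eqref{calcLs}, use $C_{12}<0$ to get the leading coefficient $a_2>0$, anchor the sign via \eqref{LhX0}--\eqref{LkX0} on the axis and Lemma~\ref{usefulhk} on the nullclines, and handle $X\ge K_1$ by showing $a_1(X)>0$ there using $C_{21}>0$ so that the quadratic has no positive roots. The only cosmetic difference is that the paper pins down the common sign for $X\ge K_1$ by evaluating $\lim_{Y\to\infty}\mathcal{L}_h(K_1,Y)$, whereas you read it off from the upward-opening parabola with nonpositive roots.
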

\end{samepage}

\begin{figure}[ht!]
    \centering
    (a) \hspace{65mm} (b)
    
 \includegraphics[scale=.3]{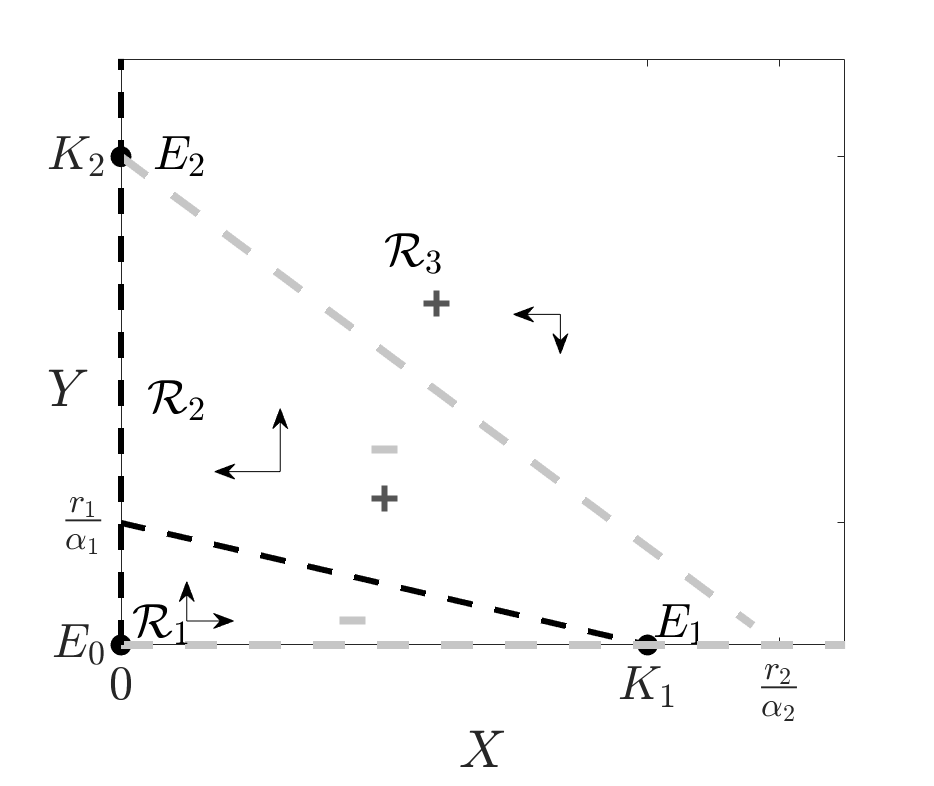}
 \includegraphics[height= 6.25cm, width=7.25cm]{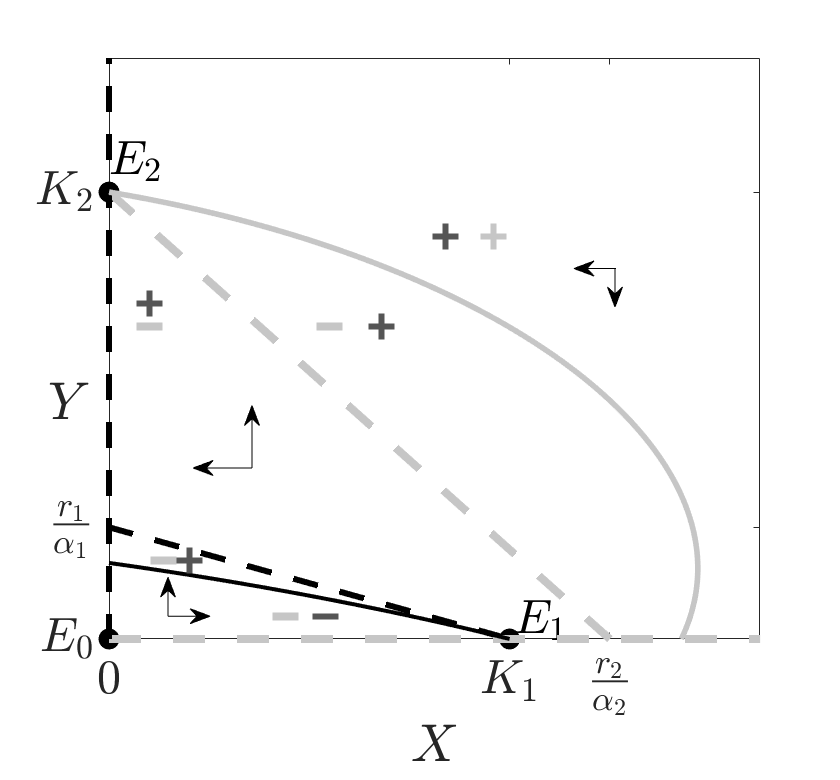}
          \caption{ Augmented   phase portraits in the case when $C_{12}<0$ and $C_{21}>0$. The standard phase portrait is augmented in a)
          by  including only the
         signs (based on the results in Lemma~\ref{LemsignCase2})  of the next-iterate operators associated with the nullclines $Y=h(X)$ and $Y=k(X)$, needed to prove the global asymptotic stability of $E_2$. This phase portrait is prototypical.     An example including the root-curves is shown in b) for parameter values:  $\alpha_1=\alpha_2=1$, $r_1=\frac{1}{2}$, $r_2=\frac{5}{8}$, $K_1=\frac{1}{2}$, $K_2=2$. Once the root-curves are included, the signs of the next-iterate operators in all regions can be included.
        }
         \label{Fig:C2general}
\end{figure}

\noindent Next we show how to use the augmented phase portrait  in Fig.~\ref{Fig:C2general}a),
 to prove that region $\mathcal{R}_2$ is positively invariant.

\begin{proposition}\label{C2invariant}
 If $C_{12}<0$ and $C_{21}>0$, then   $\mathcal{R}_2$ is positively invariant.
\end{proposition}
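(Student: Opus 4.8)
The plan is to read off the invariance of $\mathcal{R}_2$ directly from the two sign statements in Lemma~\ref{LemsignCase2}, using the geometric meaning of the next-iterate operators recorded just after Definition~\ref{def:L_ell}. Recall that $\mathcal{L}_h(X_t,Y_t)=Y_{t+1}-h(X_{t+1})$ and $\mathcal{L}_k(X_t,Y_t)=Y_{t+1}-k(X_{t+1})$, so that the sign of each operator at a point tells us whether the image of that point lands above or below the corresponding nullcline. Thus the whole proposition reduces to showing that, for a point in $\mathcal{R}_2$, the image lies above the nullcline $Y=h(X)$ and below the nullcline $Y=k(X)$ simultaneously.

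First I would fix an arbitrary $(X_t,Y_t)\in\mathcal{R}_2$. Since $\mathcal{R}_2\subseteq\mathbb{R}_+^2=(0,\infty)^2$, we have $X_t>0$ and $Y_t>0$, and by the Axiom of Parenthood discussed in Section~\ref{sec:compete} the image satisfies $X_{t+1}>0$ and $Y_{t+1}>0$; hence the image already lies in the open first quadrant and it only remains to locate it relative to the two nullclines. Applying Lemma~\ref{LemsignCase2}a), valid on $\mathcal{R}_2\cup\mathcal{R}_3\supseteq\mathcal{R}_2$, gives $\mathcal{L}_h(X_t,Y_t)>0$, that is $Y_{t+1}>h(X_{t+1})$. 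Applying Lemma~\ref{LemsignCase2}b), valid on $\mathcal{R}_1\cup\mathcal{R}_2\supseteq\mathcal{R}_2$, gives $\mathcal{L}_k(X_t,Y_t)<0$, that is $Y_{t+1}<k(X_{t+1})$. Combining the two inequalities yields $h(X_{t+1})<Y_{t+1}<k(X_{t+1})$, so $(X_{t+1},Y_{t+1})\in\mathcal{R}_2$, which is exactly positive invariance.

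The proposition itself is therefore essentially immediate once Lemma~\ref{LemsignCase2} is granted; the real work sits in that lemma, which I may invoke. I expect no genuine obstacle beyond two bookkeeping checks. The first is confirming that both parts of the lemma are applicable on all of $\mathcal{R}_2$, which holds because $\mathcal{R}_2\subseteq(\mathcal{R}_2\cup\mathcal{R}_3)\cap(\mathcal{R}_1\cup\mathcal{R}_2)$. The second is verifying the underlying geometry under $C_{12}<0$ and $C_{21}>0$, namely that $h(X)<k(X)$ on the relevant range (checking $h(0)=r_1/\alpha_1<K_2=k(0)$ and that $k$ meets the $X$-axis at $r_2/\alpha_2>K_1$, the foot of $h$), so that $\mathcal{R}_2$ is indeed the nonempty wedge trapped between the two nullclines. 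The conceptual content is simply that the two opposite sign conditions pin the iterate between the $h$- and $k$-nullclines, and it is worth remarking that the resulting inclusion is in fact into the \emph{interior} of $\mathcal{R}_2$.
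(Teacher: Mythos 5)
Your proof is correct and follows essentially the same route as the paper's: both read the positive invariance of $\mathcal{R}_2$ directly off the two sign conditions of Lemma~\ref{LemsignCase2}, interpreting $\mathcal{L}_h>0$ and $\mathcal{L}_k<0$ as placing the image above $Y=h(X)$ and below $Y=k(X)$. Your extra bookkeeping (positivity of the image and the check that both parts of the lemma apply on all of $\mathcal{R}_2$) is harmless but does not change the argument.
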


\begin{proof}
We use the augmented phase portrait shown in Fig.~\ref{Fig:C2general}a). Let $(X_t,Y_t)\in \mathcal{R}_2$. Based on the black `+' symbol in $\mathcal{R}_2$,  obtained from  Lemma~\ref{LemsignCase2}{\it a)}, $\mathcal{L}_h(X_t,Y_t)>0$ so that $(X_{t+1},Y_{t+1})$ remains  above the  nullcline $Y=h(X)$. The gray `--' symbol in $\mathcal{R}_2$, obtained from Lemma~\ref{LemsignCase2}{\it b)}, indicates that $\mathcal{L}_k(X_t,Y_t)<0$. Thus,  $(X_{t+1},Y_{t+1})$ remains below  the nullcline $Y=k(X)$. Thus, $(X_{t+1},Y_{t+1})\in \mathcal{R}_2$.
\end{proof}

\begin{theorem}\label{C2Thm1}
If $C_{12}<0$ and $C_{21}>0$, then any orbit with initial condition $X_0,Y_0>0$ converges to $E_{2}$.  Furthermore, $E_0$ is a repeller and $E_1$ s a saddle.
\end{theorem}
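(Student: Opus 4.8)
The plan is to partition the open first quadrant into the three regions $\mathcal{R}_1$, $\mathcal{R}_2$, $\mathcal{R}_3$ and to track an orbit through them using two ingredients: the componentwise monotonicity of the map inside each region, read off from the direction field \eqref{SgnDeltax}--\eqref{SgnDeltay}, and the sign information of Lemma~\ref{LemsignCase2}, which restricts how an orbit may pass between regions. Since $C_{12}<0$ and $C_{21}>0$ force $h(X)<k(X)$ throughout the first quadrant, in $\mathcal{R}_1$ both components strictly increase, in $\mathcal{R}_3$ both strictly decrease, and in the interior of $\mathcal{R}_2$ the $X$-component strictly decreases while the $Y$-component strictly increases. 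Lemma~\ref{LemsignCase2}(b), namely $\mathcal{L}_k<0$ on $\mathcal{R}_1\cup\mathcal{R}_2$, says no orbit can jump from $\mathcal{R}_1$ into $\mathcal{R}_3$, and Lemma~\ref{LemsignCase2}(a), namely $\mathcal{L}_h>0$ on $\mathcal{R}_2\cup\mathcal{R}_3$, says none can jump from $\mathcal{R}_3$ into $\mathcal{R}_1$. Because $C_{12}C_{21}<0$ there is no coexistence equilibrium, so the only equilibria with $X,Y\ge 0$ are $E_0,E_1,E_2$; this is the fact that will pin every limit to $E_2$.

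First I would dispose of $\mathcal{R}_2$. By Proposition~\ref{C2invariant} it is positively invariant, and since $\mathcal{L}_h>0$ and $\mathcal{L}_k<0$ throughout $\mathcal{R}_2$, after one step the orbit lies in its interior (the two nontrivial nullclines are repelling into the interior). There $X_t$ is decreasing and bounded below by $0$, while $Y_t$ is increasing and bounded above by $k(X_t)\le K_2$; hence both converge and the limit is an equilibrium $(X_\infty,Y_\infty)$ with $Y_\infty\ge Y_1>0$. As there is no interior equilibrium, $X_\infty=0$, and the only boundary equilibrium with positive second coordinate is $E_2=(0,K_2)$. Thus every orbit that ever enters $\mathcal{R}_2$ converges to $E_2$.

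Next I would show every orbit reaches this situation. In $\mathcal{R}_1$ both components increase and stay bounded ($0<X_t<K_1$ and $0<Y_t<r_1/\alpha_1$), so if the orbit never left $\mathcal{R}_1$ it would converge to an equilibrium of $\overline{\mathcal{R}_1}$ having both coordinates positive, which does not exist; hence it leaves $\mathcal{R}_1$ in finitely many steps, and since it cannot enter $\mathcal{R}_3$ it enters $\mathcal{R}_2$ and converges to $E_2$ by the previous paragraph. In $\mathcal{R}_3$ both components decrease and are bounded below by $0$; an orbit remaining in $\mathcal{R}_3$ for all time therefore converges to an equilibrium of $\overline{\mathcal{R}_3}$, which (ruling out $E_0$ and, using $C_{21}>0$, also $E_1$) can only be $E_2$, while an orbit that leaves $\mathcal{R}_3$ cannot enter $\mathcal{R}_1$ and so enters $\mathcal{R}_2$. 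Either way it converges to $E_2$, which combines the three cases into global convergence for all $X_0,Y_0>0$. I expect this transient step, certifying that orbits must leave $\mathcal{R}_1$ and cannot accumulate at a spurious point of $\mathcal{R}_3$, to be the main obstacle, since it is exactly where monotonicity, the jump constraints of Lemma~\ref{LemsignCase2}, boundedness, and the absence of a coexistence equilibrium must all be used together.

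For the local assertions I would linearise. The Jacobian at $E_0$ is $\mathrm{diag}(1+r_1,\,1+r_2)$, with both eigenvalues exceeding $1$, so $E_0$ is a repeller. The Jacobian at $E_1=(K_1,0)$ is upper triangular with diagonal entries $\tfrac{1}{1+r_1}<1$ and $\tfrac{1+r_2}{1+\alpha_2 K_1}$; the latter exceeds $1$ precisely because $C_{21}=\tfrac{r_2}{\alpha_2}-K_1>0$, giving one eigenvalue inside and one outside the unit circle, so $E_1$ is a saddle. This is consistent with the global picture, in which the $X$-axis is the stable set of $E_1$ (Theorem~\ref{thm:E0}c)) while interior orbits are repelled toward $E_2$.
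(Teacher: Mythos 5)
Your proposal is correct and, for the main global-convergence claim, follows essentially the same route as the paper: the same decomposition into $\mathcal{R}_1,\mathcal{R}_2,\mathcal{R}_3$, Proposition~\ref{C2invariant} for the positive invariance of $\mathcal{R}_2$, and the signs from Lemma~\ref{LemsignCase2} to forbid jumps from $\mathcal{R}_1$ over $Y=k(X)$ and from $\mathcal{R}_3$ below $Y=h(X)$, with monotone--bounded sequences and the absence of a coexistence equilibrium forcing escape into $\mathcal{R}_2$ and convergence to $E_2$; you merely spell out more explicitly than the paper why the limit of a componentwise monotone bounded orbit is a fixed point and why it must be $E_2$. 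The one genuine divergence is the local classification of $E_0$ and $E_1$: you linearize and compute the (correct) eigenvalues $1+r_1,\ 1+r_2$ at $E_0$ and $\tfrac{1}{1+r_1},\ \tfrac{1+r_2}{1+\alpha_2K_1}$ at $E_1$, using $C_{21}>0$ to place the latter outside the unit circle, whereas the paper deliberately avoids the Jacobian and deduces repeller/saddle behaviour from Theorem~\ref{thm:E0} and the direction field. Both are valid; the paper's version supports its thesis that the augmented phase portrait suffices without linearization, while yours is more routine but gives the eigenvalues explicitly.
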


\begin{proof}
Proposition~\ref{C2invariant} and the direction field in $\mathcal{R}_2$ imply that orbits entering $\mathcal{R}_2$ converge to $E_2$. 
To show that $E_2$ is globally asymptotically stable with respect to all solutions with positive initial conditions, it  suffices to show that all such orbits either converge to $E_2$ or eventually enter $\mathcal{R}_2$. We do so using the augmented phase portrait in Fig.~\ref{Fig:C2general}a).

\begin{itemize}
    \item Let $(X_0,Y_0)\in \mathcal{R}_1$. The gray `--' symbols in $\mathcal{R}_1\cup\mathcal{R}_2$, obtained from  Lemma~\ref{LemsignCase2}b), indicate that $\mathcal{L}_k(X_t,Y_t)<0$,   for all $t\geq 0$ and so the entire forward orbit must remain below the nullcline $Y=k(X)$. The  orbit cannot remain in $\mathcal{R}_1$ indefinitely, since the component-wise monotonicity given by the direction field
    would imply convergence to an equilibrium, but also prevents convergence to any equilibrium in that region.     Hence, the orbit must eventually enter  $\mathcal{R}_2$, and hence converge to $E_2$.
    \item  Let $(X_0,Y_0)\in \mathcal{R}_3$. If the orbit remains in $\mathcal{R}_3$ indefinitely, by the component-wise monotonicity obtained from the direction field, the orbit must converge to $E_2$.   Otherwise, the  black `+' symbol in $\mathcal{R}_2\cup \mathcal{R}_3$,  obtained from  Lemma~\ref{LemsignCase2}a), indicates that $\mathcal{L}_h(X_t,Y_t)>0$, for all $t\geq 0$. The orbit must therefore remain above the nullcline $Y=h(X)$, and hence must enter $\mathcal{R}_2$ and once again converge to $E_2$. 
\end{itemize}

Thus, all orbits with positive initial conditions converge to  $E_2$. 
By Theorem~\ref{thm:E0} and the direction field, it is clear that $E_0$ is a repeller and $E_1$ is a saddle.
\end{proof}

This theorem implies that if the competitive efficiency of competitor $X$ is negative and the competitive efficiency of competitor $Y$ is positive, then $Y$ is the sole surviving population.  If the sign of the competitive efficiencies are reversed, then population $X$ is the sole surviving population. This result is stated  in the following theorem that can be proven by simply exchanging the parameter indices for $X$ and $Y$.

\begin{theorem}
If $C_{12}>0$ and $C_{21}<0$, then any orbit with initial condition $X_0,Y_0>0$ converges to $E_{1}$,   $E_0$ is a repeller, and  $E_1$ is a saddle.
\end{theorem}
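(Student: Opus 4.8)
The plan is to observe that this final theorem is the mirror image of Theorem~\ref{C2Thm1}, obtained by swapping the roles of the two competitors, so the cleanest proof is to invoke the symmetry of the system~\eqref{Compete} under the exchange $(X,Y)\mapsto(Y,X)$ together with the simultaneous relabelling of parameters $(r_1,K_1,\alpha_1)\leftrightarrow(r_2,K_2,\alpha_2)$. First I would check that this involution maps the map $F,G$ to itself: the equation for $X_{t+1}$ in~\eqref{Compete} becomes precisely the equation for $Y_{t+1}$ under this substitution, and vice versa, so any trajectory of the transformed system corresponds to a trajectory of the original with the coordinates interchanged. Under this exchange, the competitive efficiencies transform as $\mathcal{C}_{12}\leftrightarrow\mathcal{C}_{21}$ (from the definition~\eqref{def:Cij}, $\mathcal{C}_{12}=r_1/\alpha_1-K_2$ and $\mathcal{C}_{21}=r_2/\alpha_2-K_1$ simply swap), and the boundary equilibria map as $E_1=(K_1,0)\leftrightarrow E_2=(0,K_2)$ while $E_0$ is fixed.

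The key steps, in order, are as follows. I would first state explicitly the coordinate transformation $\Phi(X,Y)=(Y,X)$ and the parameter relabelling, and verify that $\Phi$ conjugates the map~\eqref{Compete} with parameters $(r_1,K_1,\alpha_1,r_2,K_2,\alpha_2)$ to the map~\eqref{Compete} with the swapped parameters. Second, I would note that the hypothesis $\mathcal{C}_{12}>0,\ \mathcal{C}_{21}<0$ for the original system is exactly the hypothesis $\mathcal{C}_{12}<0,\ \mathcal{C}_{21}>0$ for the relabelled system, so Theorem~\ref{C2Thm1} applies to the transformed system. Third, I would translate the conclusion of Theorem~\ref{C2Thm1} back through $\Phi$: every positive orbit of the transformed system converges to its $E_2=(0,K_2)$, which under $\Phi^{-1}=\Phi$ corresponds to convergence of the original orbit to $(K_1,0)=E_1$; likewise the transformed $E_0$ repeller and $E_1$ saddle pull back to the original $E_0$ repeller and (as stated) the saddle behaviour.

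Since this is a pure symmetry argument, there are no substantive computations to grind through; the entire content is the verification that the involution is genuinely a conjugacy of the dynamical system and that it carries the hypotheses and conclusions across correctly. The main (and only real) obstacle is bookkeeping: I must be careful that positivity of initial conditions is preserved by $\Phi$ (it is, since $\Phi$ merely permutes the two positive coordinates and fixes the open first quadrant setwise), and that the stability-type labels are transported consistently. I would also flag the one cosmetic discrepancy in the statement as written: by symmetry with Theorem~\ref{C2Thm1}, where $E_1$ is the saddle, the natural mirrored conclusion is that $E_2$ (not $E_1$) should be the saddle in this reversed case, with $E_1$ now globally attracting; I would either correct this to ``$E_2$ is a saddle'' or note that the intended reading follows directly from applying $\Phi$ to the conclusion of Theorem~\ref{C2Thm1}. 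Apart from that one-line caveat, the proof is complete once the conjugacy is recorded, so I would keep it to a single short paragraph invoking the exchange symmetry rather than repeating the orbit-by-orbit analysis of Theorem~\ref{C2Thm1}.
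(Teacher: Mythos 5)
Your symmetry argument is exactly the paper's own proof: the authors state that the result ``can be proven by simply exchanging the parameter indices for $X$ and $Y$,'' which is precisely the conjugacy $\Phi(X,Y)=(Y,X)$ with the parameter relabelling you describe, applied to Theorem~\ref{C2Thm1}. Your flag that the conclusion should read ``$E_2$ is a saddle'' (not $E_1$, which is the global attractor in this case) is also correct --- this is evidently a typo in the paper, as the mirrored statement of Theorem~\ref{C2Thm1} shows.
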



 Fig.~\ref{Fig:C2general}b) provides an example for Case II, that includes the root-curves for the parameter choices $r_1=\frac{1}{2}$, $r_2=\frac{5}{8}$, $K_1=\frac{1}{2}$,  $K_2=2$, and $\alpha_1=\alpha_2=1$.  Although the precise location of the root-curves is not necessary, as only some of the signs of the next-iterate operators were needed to obtain the global dynamics in Theorem~\ref{C2Thm1}, the positions of the   root-curves    are in fact generic for Case II. 
More precisely, the (gray) root-curve  (associated with  nullcline, $Y=k(X)$),  remains above $Y=k(X)$,  intersects $E_2$, and intersects the $X$-axis. The (black) root-curve  (associated with  the nullcline, $Y=h(X))$, remains below $Y=h(X)$, intersects the $Y$-axis, intersects $E_1$, and is  negative,  for $X>K_1$. Above the gray root-curve, the `++' symbols indicate that orbits remain above both nullclines. The direction field in this region implies  that orbits converge to $E_2$ or eventually enter the region between the gray root-curve and the nullcline $Y=k(X)$. There, the  black `+' symbol indicates that an orbit remains above the nullcline $Y=h(X)$  but the gray `--' symbol indicates that they jump below the nullcline $Y=k(X)$. Thus, the orbit enters the region bounded by the nontrivial nullclines. The signs in that region imply that an orbit remains in  that region  and, together with the direction field, imply that the orbit converges to $E_2$.

\subsubsection{Case III: $C_{12}, C_{21}<0$}

In this case,  the nontrivial $X$ and $Y$ nullclines intersect exactly once in the interior of the  first quadrant, and so there exists a unique  coexistence equilibrium $E^*=(X^*,Y^*)$ with $X^*,Y^*>0$. 
  Since the signs of the competitive efficiencies are both negative, 
 \begin{equation}\label{signhkC3}
h(X) - k(X) \quad \begin{cases}\quad  < \quad 0, & \quad \mbox{if }  \,X<X^*,\\
\quad = \quad 0, \quad & \quad \mbox{if }  \,X=X^*,\\
\quad > \quad 0, \quad  & \quad \mbox{if }  \, X>X^*.\end{cases}
 \end{equation}

Since the two nontrivial nullclines intersect in  $\mathbb{R}_+^2$, there are   four regions of interest: 
\begin{alignat*}{2}
     &\mathcal{R}_1=\{(X,Y)\in \mathbb{R}_+^2\colon \, Y< \min\{h(X),k(X)\}\},\quad
     &\mathcal{R}_2=\{(X,Y)\in \mathbb{R}_+^2\colon\,  k(X)\leq Y\leq  h(X)\}\backslash \{E^*\},\\
    & \mathcal{R}_3=\{(X,Y)\in \mathbb{R}_+^2\colon \, Y>\max\{h(X),k(X)\}\},\quad 
     &\mathcal{R}_4=\{(X,Y)\in \mathbb{R}_+^2\colon \, h(X)\leq Y\leq k(X)\}\backslash \{E^*\}.
\end{alignat*}

    

  The proof of the following Lemma relies on \eqref{calcLs} and is provided in  Appendix \ref{PfLemsignCase3}.

\begin{lemma}\label{LemsignCase3}
Assume $C_{12}, C_{21}<0$.
\begin{enumerate}  
\item[a)] $\mathcal{L}_h(X,Y)>0$ for $(X,Y)\in \mathcal{R}_4\cup  \mathcal{R}_{3_4}$,  where $\mathcal{R}_{3_4}:=\mathcal{R}_3\cap \{(X,Y)\colon \, X<X^*\}$. 
\item[b)] $\mathcal{L}_h(X,Y)<0$ for $(X,Y)\in \mathcal{R}_2\cup \mathcal{R}_{1_2}$,  where $\mathcal{R}_{1_2}:=\mathcal{R}_1\cap \{(X,Y)\colon \, X^*<X\}$.
\item[c)] $\mathcal{L}_k(X,Y)>0$ for $(X,Y) \in \mathcal{R}_2 \cup \mathcal{R}_{3_2}$,  where  $\mathcal{R}_{3_2}:=\mathcal{R}_3\cap \{(X,Y)\colon \, Y<Y^*\}$. 
\item[d)] $\mathcal{L}_k(X,Y)<0$ for $(X,Y)\in \mathcal{R}_4 \cup\mathcal{R}_{1_4} $, where $\mathcal{R}_{1_4}:=\mathcal{R}_1\cap \{(X,Y)\colon \, Y^*<Y\}$.
\end{enumerate}
\end{lemma}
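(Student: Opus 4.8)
The plan is to reduce the four claims to sign statements about two quadratic polynomials and then to propagate the sign information that is already available on the nullclines into the two-dimensional regions by a one-variable slicing argument.

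\textbf{Reduction to the numerators.} By \eqref{calcLs} the operators are $\mathcal{L}_h=N_h/D_h$ and $\mathcal{L}_k=N_k/D_k$ with $N_h,N_k$ quadratic and $D_h,D_k>0$ on $\{X\ge 0,\,Y\ge 0\}$, so on the first quadrant $\operatorname{sign}\mathcal{L}_h=\operatorname{sign}N_h$ and $\operatorname{sign}\mathcal{L}_k=\operatorname{sign}N_k$. The boundary data are supplied by Lemma~\ref{usefulhk} together with the ordering \eqref{signhkC3}: along $Y=h(X)$ one has $\mathcal{L}_h>0$ for $0<X<X^*$ and $\mathcal{L}_h<0$ for $X^*<X<K_1$, while along $Y=k(X)$ one has $\mathcal{L}_k<0$ for $0<X<X^*$ and $\mathcal{L}_k>0$ for $X^*<X<\tfrac{r_2}{\alpha_2}$. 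Lemma~\ref{Lemnoint} moreover tells us $N_h$ vanishes on $Y=h(X)$ only at the equilibria $E_1,E^*$ and $N_k$ only at $E_2,E^*$, so each operator changes sign across $E^*$ as one moves along the respective nullcline --- this is exactly the source of the splits at $X^*$ and $Y^*$ in the statement.

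\textbf{Slicing.} For $\mathcal{L}_h$ I would cut by vertical lines (fix $X$), so that an entire slice lies in $\{X<X^*\}$ or in $\{X>X^*\}$; for $\mathcal{L}_k$ I would cut by horizontal lines (fix $Y$), so that a slice lies in $\{Y<Y^*\}$ or in $\{Y>Y^*\}$. From \eqref{calcLs} the relevant leading coefficient is positive ($N_h(X,\cdot)$ opens upward in $Y$, $N_k(\cdot,Y)$ opens upward in $X$). The two negative claims b) and d) are then immediate: on such a slice the target region is a bounded interval whose two endpoints lie on curves where the operator is already known to be negative (the nullcline together with an axis, using \eqref{LhX0} for b) and \eqref{LkY0} for d)); an upward parabola that is negative at both endpoints of an interval is negative throughout it. The two positive claims a) and c) are the delicate ones, because the target slice is a half-line (e.g. $\{Y\ge h(X)\}$ for fixed $X<X^*$) on which the operator is positive at the finite endpoint and positive at infinity (the limit $\mathcal{L}_h\to\frac{(1+r_2)K_2}{r_2}-\frac{r_1}{\alpha_1}>0$ and the analogous limit for $\mathcal{L}_k$ are positive precisely because $C_{12},C_{21}<0$); an upward parabola with these two data could still dip below zero in between.

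\textbf{Main obstacle and the role of Lemma~\ref{rootintersectD1}.} Ruling out that interior dip is where the real work lies and is the only step that genuinely needs the explicit quadratics from \eqref{calcLs}: one must show $N_h(X,\cdot)$ has no root above $Y=h(X)$ for $X<X^*$ (e.g. by checking $\partial_Y N_h\ge 0$ at the nullcline, placing the slice at or past the vertex, or by a discriminant estimate), and symmetrically for $N_k$. Geometrically this is the assertion that the root-curve $S_h$ does not re-enter $\mathcal{R}_4\cup\mathcal{R}_{3_4}$ and $S_k$ does not re-enter $\mathcal{R}_2\cup\mathcal{R}_{3_2}$; here Lemma~\ref{rootintersectD1} is the decisive input (applicable since $C_{12}C_{21}>0$), since it confines $S_h\cap S_k$ to the single point $E^*$ in the quadrants $\{0<X\le X^*,\,0<Y\le Y^*\}$ and $\{X\ge X^*,\,Y\ge Y^*\}$, preventing the two root-curves from interlocking inside the target regions. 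Once the no-dip step is secured, each region inherits the constant sign of the operator on its adjoining nullcline, which gives all four parts.
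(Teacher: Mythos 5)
Your setup (reduce to the quadratic numerators, slice vertically for $\mathcal{L}_h$ and horizontally for $\mathcal{L}_k$, feed in the boundary signs from Lemma~\ref{usefulhk}, \eqref{signhkC3}, \eqref{LhX0} and \eqref{LkY0}) matches the paper's, and your treatment of the negative claims \textit{b)} and \textit{d)} is sound: an upward parabola negative at two points is negative on the whole segment between them. The gap is in how you close the positive claims \textit{a)} and \textit{c)}. You correctly identify the obstacle --- on a half-line slice an upward parabola that is positive at the finite endpoint and at infinity could still dip below zero --- but the tool you reach for, Lemma~\ref{rootintersectD1}, cannot rule this out. That lemma constrains only the intersection $S_h\cap S_k$ of the two root-sets with \emph{each other}; a ``dip'' of $\mathcal{L}_h$ above the nullcline $Y=h(X)$ would be produced by a second positive branch of $S_h$ \emph{alone} on a vertical slice, and nothing about where $S_h$ meets $S_k$ prevents that. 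So the step you call decisive does not actually secure the no-dip claim.

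The correct (and much simpler) resolution, which is what the paper does, is already contained in data you cite but only deploy for the negative cases: since $C_{12}<0$ gives $a_2>0$ by \eqref{CoefLh}, and $a_0(X)=N_h(X,0)<0$ for $X\in(0,K_1)$, the product of the two roots of the upward-opening quadratic $N_h(X,\cdot)$ is $a_0(X)/a_2<0$, so on every vertical slice there is exactly one positive root $r_{h_1}(X)$ and $\mathcal{L}_h(X,\cdot)$ changes sign exactly once on $Y>0$ (cf.\ \eqref{rh}). Positivity at $Y=h(X)$ for $X<X^*$ then forces $r_{h_1}(X)<h(X)$ and hence $\mathcal{L}_h>0$ on all of $\mathcal{R}_4\cup\mathcal{R}_{3_4}$; no dip is possible because one of the two roots is already negative. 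The symmetric argument with $B_2>0$ and $B_0(Y)<0$ handles $\mathcal{L}_k$ on horizontal slices. Lemma~\ref{rootintersectD1} is not needed here at all (the paper uses it later, in Lemma~\ref{box1}); what is needed, and what your proposal leaves open, is the uniqueness of the positive root on each slice.
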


Fig.~\ref{Fig:C3general}a)   includes only  the information obtained in Lemma~\ref{LemsignCase3} about the signs of the next-iterate operators required to obtain the global dynamics in this case.     The signs  in $\mathcal{D}_1\cup \mathcal{D}_2$ are   not necessary,  where:
\begin{align}\label{DefregionsD}
\mathcal{D}_1&=:\left\{ (X,Y)\in \mathcal{R}_1
\colon 0<X\leq X^*, \, 0<Y\leq Y^*\right\}\backslash \{E^*\}, \notag \\[2mm]
\mathcal{D}_2&=:\left\{ (X,Y)\in \mathcal{R}_3
\colon \,  X^*\leq X, \, Y^*\leq Y \right\}
\backslash \{E^*\}.
\end{align}

Although, Fig.~\ref{Fig:C3general}b)   is  an example that shows that  the sign of one of the next-iterate operators  can change sign in 
at least one of these regions,  we will show that  no orbit can oscillate between $\mathcal{D}_1$ and $\mathcal{D}_2$. This will be sufficient for us  to determine the global dynamics using the augmented phase portrait.

\begin{proposition}\label{PropinvariantC3}
If $C_{12},C_{21}<0$, then regions $\mathcal{R}_2$ and $\mathcal{R}_4$ are positively  invariant. 
\end{proposition}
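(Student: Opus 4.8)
The plan is to obtain positive invariance of each region by combining the sign information in Lemma~\ref{LemsignCase3} directly with the geometric meaning of the next-iterate operators recorded just after Definition~\ref{def:L_ell}. Concretely, for an orbit of \eqref{Compete} one has $\mathcal{L}_h(X_t,Y_t)=Y_{t+1}-h(X_{t+1})$ and $\mathcal{L}_k(X_t,Y_t)=Y_{t+1}-k(X_{t+1})$, so the sign of each operator pins down on which side of the corresponding nullcline the image lies. Since $\mathcal{R}_2$ and $\mathcal{R}_4$ are each the region trapped between the two nontrivial nullclines, lying on opposite sides of $X^*$ by \eqref{signhkC3}, it suffices to show that the image point lies simultaneously below $Y=h(X)$ and above $Y=k(X)$ (for $\mathcal{R}_2$), or above $Y=h(X)$ and below $Y=k(X)$ (for $\mathcal{R}_4$).

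For $\mathcal{R}_2$ I would take $(X_t,Y_t)\in\mathcal{R}_2$ and invoke part b) of Lemma~\ref{LemsignCase3}, giving $\mathcal{L}_h(X_t,Y_t)<0$, hence $Y_{t+1}<h(X_{t+1})$, together with part c), giving $\mathcal{L}_k(X_t,Y_t)>0$, hence $Y_{t+1}>k(X_{t+1})$. These strict inequalities yield $k(X_{t+1})<Y_{t+1}<h(X_{t+1})$; in particular $h(X_{t+1})>k(X_{t+1})$ forces $X_{t+1}>X^*$ by \eqref{signhkC3}, so the image lies strictly between the nullclines to the right of $E^*$ and therefore in $\mathcal{R}_2$. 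The argument for $\mathcal{R}_4$ is the mirror image: parts a) and d) of Lemma~\ref{LemsignCase3} give $\mathcal{L}_h(X_t,Y_t)>0$ and $\mathcal{L}_k(X_t,Y_t)<0$, i.e. $h(X_{t+1})<Y_{t+1}<k(X_{t+1})$, which forces $X_{t+1}<X^*$ and places the image in $\mathcal{R}_4$.

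Two bookkeeping points remain, and they are the only places where care is needed rather than any genuine obstacle. First, I must confirm the image stays in the open quadrant $\mathbb{R}_+^2$: since $(X_t,Y_t)\in\mathcal{R}_2\subseteq\mathbb{R}_+^2$ has $X_t,Y_t>0$, the positivity of all coefficients in \eqref{Compete} gives $X_{t+1},Y_{t+1}>0$, as already noted via the Axiom of Parenthood. Second, I must check the image avoids the excluded point $E^*$: this is automatic, because the strict inequalities above keep the image off both nullclines simultaneously, so it can never equal $E^*$. The genuine substance of the result is thus packaged inside Lemma~\ref{LemsignCase3}, proved in Appendix~\ref{PfLemsignCase3}; the present proposition is then a short deduction, and I expect no serious difficulty beyond correctly matching each operator sign to the ``above/below'' side of its nullcline and tracking which side of $X^*$ the image lands on.
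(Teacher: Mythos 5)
Your proposal is correct and follows essentially the same route as the paper's own proof: for each of $\mathcal{R}_2$ and $\mathcal{R}_4$ you invoke the relevant pair of sign statements from Lemma~\ref{LemsignCase3} and translate them, via the meaning of $\mathcal{L}_h$ and $\mathcal{L}_k$, into the image lying strictly between the two nullclines on the correct side. The extra bookkeeping you supply --- using \eqref{signhkC3} to pin down which side of $X^*$ the image lands on, and noting that the strict inequalities exclude $E^*$ --- is a welcome tightening of details the paper leaves implicit, but it does not change the argument.
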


\begin{proof}

We use the augmented phase portrait shown in Fig.~\ref{Fig:C3general}a).
Assume $(X_t,Y_t)\in \mathcal{R}_4$. 
The black `+' symbol in $\mathcal{R}_4$, obtained in Lemma~\ref{LemsignCase3}{\it a)}, implies that $\mathcal{L}_h(X_t,Y_t)>0$ so that $(X_{t+1}, Y_{t+1})$ is above the nullcline $Y=h(X)$. The gray `--' symbol in $\mathcal{R}_4$, obtained from Lemma~\ref{LemsignCase3}{\it d)}, implies that $\mathcal{L}_k(X_t,Y_t)<0$ so that $(X_{t+1},Y_{t+1})$ lies below the nullcline $Y=k(X)$. Thus, $(X_{t+1},Y_{t+1})\in \mathcal{R}_4$. 

Next, assume  that $(X_t,Y_t)\in \mathcal{R}_2$. 
Based on the black `--' symbol in $\mathcal{R}_2$, obtained from  Lemma~\ref{LemsignCase3}b), $\mathcal{L}_h(X_t,Y_t)<0$ so that $(X_{t+1},Y_{t+1})$ lies below the nullcline $Y=h(X)$. Also, the gray `+' symbol in that region obtained from Lemma~\ref{LemsignCase3}c), indicates that $\mathcal{L}_k(X_t,Y_t)>0$  so that $(X_{t+1},Y_{t+1})$ lies above the nullcline $Y=k(X)$. Thus, $(X_{t+1},Y_{t+1})\in \mathcal{R}_2$. 
\end{proof}

\begin{figure}[!htb]
    \centering
    a) \hspace{60mm} b)
    
       \includegraphics[scale=0.32]{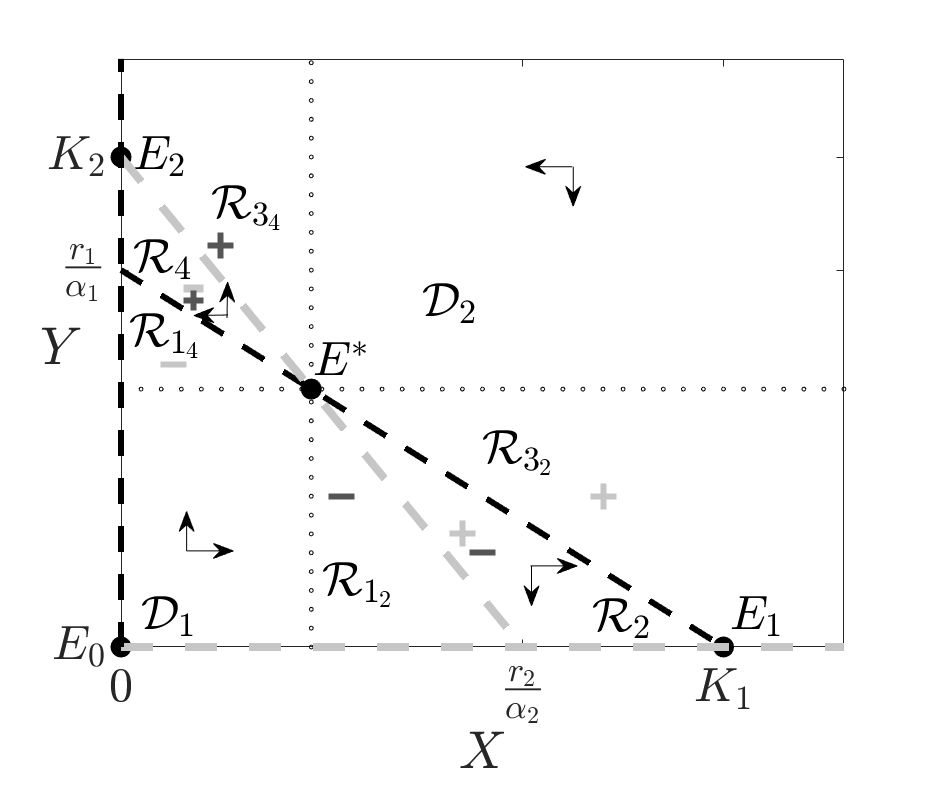}
       \includegraphics[trim= 1cm 0cm 0cm 0cm,clip,scale=0.36]{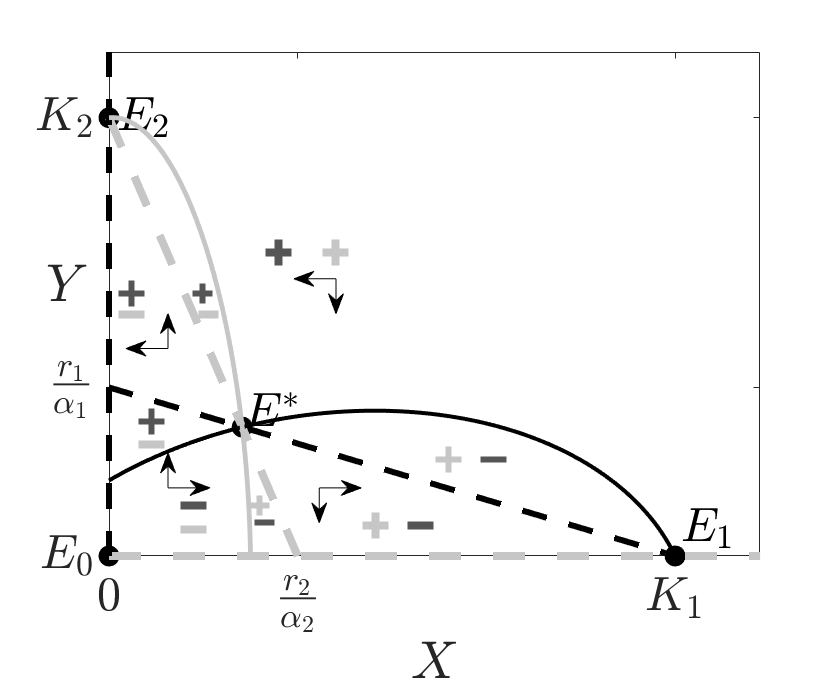}
     \caption{ Augmented phase portraits in  the case  when  $C_{12},C_{21}<0$,
     where  $\mathcal{R}_3= \mathcal{R}_{3_4}\cup\mathcal{D}_2\cup\mathcal{R}_{3_2}$  and $\mathcal{R}_1=\mathcal{R}_{1_2}\cup\mathcal{D}_1\cup\mathcal{R}_{1_4}$.
 In a),  the standard phase portrait
     is augmented by including the  signs  (based on the results in     Lemma~\ref{LemsignCase3}) of the next-iterate operators associated with the nullclines $Y=h(X)$ and $Y=k(X)$, needed to determine the global  dynamics, and is prototypical in this case.    
    In  particular, knowing the signs in regions $\mathcal{D}_1$ and $\mathcal{D}_2$ is not required. 
       In  b), an example  that includes the root-curves is provided   for parameter values, $\alpha_1=1$, $\alpha_2=3$, $r_1=\frac{1}{2}$, $r_2=2$, $K_1=2$,  and $K_2=1.3$. 
}
\label{Fig:C3general}
\end{figure}


 Using the augmented phase portrait in Fig.~\ref{Fig:C3general}a), we   show the following result by arguing that there cannot be a `++' region in $\mathcal{D}_1$ (see the details in Appendix \ref{Pfbox1}).

\begin{lemma}\label{box1}
Let $C_{12}, C_{21}<0$. If $(X,Y)\in \mathcal{D}_1$, then $(F(X,Y),G(X,Y))\notin \mathcal{D}_2$.
\end{lemma}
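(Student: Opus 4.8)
The plan is to reduce the geometric conclusion to a statement purely about the signs of the two next-iterate operators, and then to exclude the offending sign combination on the closed rectangle $\bar{Q}:=[0,X^*]\times[0,Y^*]$ using Lemma~\ref{rootintersectD1} together with boundary sign information. First I would observe that $\mathcal{D}_2\subseteq\mathcal{R}_3$, so if $(F(X,Y),G(X,Y))\in\mathcal{D}_2$ then the image lies strictly above both nontrivial nullclines; by Definition~\ref{def:L_ell} this is exactly $\mathcal{L}_h(X,Y)>0$ and $\mathcal{L}_k(X,Y)>0$. Hence it suffices to prove that $\mathcal{D}_1$ contains no point at which both operators are positive, i.e.\ no `$++$' point. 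This is in fact slightly stronger than the stated claim, but it is the natural target, and it matches the intended ``no `$++$' region'' argument.

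Next I would pin down the signs of $\mathcal{L}_h$ and $\mathcal{L}_k$ on all four edges of $\bar{Q}$. Since $E^*\in(0,K_1)\times(0,K_2)$ by \eqref{coexequ}, equation~\eqref{LhX0} gives $\mathcal{L}_h<0$ on the bottom edge $\{Y=0,\,0\le X\le X^*\}$ and \eqref{LkY0} gives $\mathcal{L}_k<0$ on the left edge $\{X=0,\,0\le Y\le Y^*\}$. For the other two edges I would use continuity of the rational operators together with Lemma~\ref{LemsignCase3}: every point of the right edge $\{X=X^*,\,0<Y<Y^*\}$ is a limit of points of $\mathcal{R}_{1_2}\cup\mathcal{R}_2$, where $\mathcal{L}_h<0$ by Lemma~\ref{LemsignCase3}\,b), so $\mathcal{L}_h\le 0$ there; likewise the top edge $\{Y=Y^*,\,0<X<X^*\}$ is a limit of points of $\mathcal{R}_{1_4}$, where $\mathcal{L}_k<0$ by Lemma~\ref{LemsignCase3}\,d), so $\mathcal{L}_k\le 0$ there. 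At the corner $E^*$ both operators vanish. Thus $\mathcal{L}_h\le 0$ on the bottom and right edges, while $\mathcal{L}_k\le 0$ on the left and top edges, which already localizes $\{\mathcal{L}_h>0\}$ to the upper-left and $\{\mathcal{L}_k>0\}$ to the lower-right of $\bar{Q}$.

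Then I would argue topologically. Suppose a `$++$' point existed and let $W$ be the connected component of $\{\mathcal{L}_h>0\}\cap\{\mathcal{L}_k>0\}$ containing it. By the boundary signs just established, $W$ meets none of the four edges, so $\partial W$ lies in the interior of $\bar{Q}$ and is composed solely of arcs of the two root-curves $r_h:=\{\mathcal{L}_h=0\}$ and $r_k:=\{\mathcal{L}_k=0\}$. Any point where $\partial W$ passes from an $r_h$-arc to an $r_k$-arc would lie in $S_h\cap S_k$; but Lemma~\ref{rootintersectD1} forces $S_h\cap S_k\cap\bar{Q}=\{E^*\}$, and $E^*$ is a corner, not an interior point. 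Hence $\partial W$ cannot switch curves and must be a closed loop lying in a single conic, say $r_h=\{N_h=0\}$ with $N_h$ the quadratic numerator from \eqref{calcLs}. The contradiction I aim for is that such a loop forces $r_h$ (or $r_k$) to possess a closed, oval branch enclosing a region of $\mathcal{D}_1$, whereas the explicit form of $N_h$ and $N_k$ shows each zero set meets $\bar{Q}$ only as a single arc running between boundary points, which cannot bound $W$.

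The hard part will be this last step. Lemma~\ref{rootintersectD1} is precisely the ingredient that keeps the two root-curves from crossing in the interior, and that is what lets the edge-sign data keep the two positivity sets apart; but closing the argument requires verifying that neither root-curve forms a self-enclosed oval inside $\bar{Q}$, and this is exactly where the explicit quadratic numerators of \eqref{calcLs} are needed. A secondary technical point is the non-strict inequality on the top and right edges: I would check that the isolated places where $\mathcal{L}_h$ or $\mathcal{L}_k$ vanishes on those edges do not open a channel through which $W$ could escape to an edge, which again follows once each root-curve is shown to meet $\bar{Q}$ as a single arc.
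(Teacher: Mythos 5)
Your reduction of the lemma to the absence of a `$++$' point in $\mathcal{D}_1$, and your use of Lemma~\ref{rootintersectD1} to prevent the boundary of a putative `$++$' component from switching between the two root-curves, both match the paper's strategy. But your argument stops exactly where the real work is: you concede that you must still rule out a closed oval branch of a root-curve inside $\bar{Q}$, and you only assert that ``the explicit form of $N_h$ and $N_k$ shows each zero set meets $\bar{Q}$ only as a single arc'' without proving it. That assertion is the heart of the matter: the zero set of a quadratic can perfectly well contain an ellipse, so nothing in the topological part of your argument excludes a `$++$' island bounded by an oval of $S_h$ (or of $S_k$) sitting strictly inside the rectangle. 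As written, the proposal has a genuine gap at its decisive step.

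The paper closes this gap with a short computation that you would need to supply. Since $C_{12}<0$ gives $a_2>0$ while $a_0(X)<0$ for $X\in(0,K_1)$, the quadratic $N_h(X,\cdot)$ has exactly one positive and one negative root for each such $X$, so by \eqref{rh} the set $S_h$ restricted to the strip $0<X<K_1$, $Y>0$ is the graph of the single function $Y=r_{h_1}(X)$; symmetrically, $C_{21}<0$ gives $B_2>0$ and $B_0(Y)<0$ for $Y\in(0,K_2)$, so $S_k$ there is the graph of $X=R_{k_1}(Y)$. A graph over an interval contains no oval, and since $r_{h_1}(0)>0$ and $R_{k_1}(0)>0$, each root-set meets $\overline{\mathcal{D}_1}$ as a single arc running from a coordinate axis to $E^*$. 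Lemma~\ref{rootintersectD1} then forces these two arcs to be nested rather than crossing, and the signs $\mathcal{L}_h(X,0)<0$ from \eqref{LhX0} and $\mathcal{L}_k(0,Y)<0$ from \eqref{LkY0} pin down which side of each arc carries the `$+$' sign; the two positivity sets end up on opposite sides of the nested pair and cannot intersect in $\mathcal{D}_1$. Your boundary-sign bookkeeping on the four edges of $\bar{Q}$ is a reasonable alternative way to localize the two positivity sets, but it is not a substitute for this single-arc property.
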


\begin{theorem}\label{GAC3}
If $C_{12}, C_{21}<0$, then every orbit with $X_0,Y_0>0$ converges to $E_{2}$, $E_{1}$, or $E^*$. Moreover, $E_1$ and $E_2$ are locally asymptotically stable and $E^*$ is unstable.
\end{theorem}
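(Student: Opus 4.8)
The plan is to use the two lines $X=X^*$ and $Y=Y^*$ through the coexistence equilibrium to split the open first quadrant into four ``corner quadrants''
$$Q_{LR}=\{X\ge X^*,\,Y\le Y^*\},\quad Q_{UL}=\{X\le X^*,\,Y\ge Y^*\},\quad Q_{LL}=\{X\le X^*,\,Y\le Y^*\},\quad Q_{UR}=\{X\ge X^*,\,Y\ge Y^*\},$$
and to prove that orbits in $Q_{LR}$ converge to $E_1$, orbits in $Q_{UL}$ converge to $E_2$, and orbits in the diagonal quadrants $Q_{LL}$ and $Q_{UR}$ either converge to $E^*$ or leave into $Q_{LR}$ or $Q_{UL}$. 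Since $h$ and $k$ are decreasing and meet only at $E^*$, one checks directly that (within the positive quadrant) $Q_{LR}=\mathcal{R}_{1_2}\cup\mathcal{R}_2\cup\mathcal{R}_{3_2}$, that $Q_{UL}=\mathcal{R}_{1_4}\cup\mathcal{R}_4\cup\mathcal{R}_{3_4}$, and that $Q_{LL}\setminus\{E^*\}=\mathcal{D}_1$ lies strictly below both nullclines while $Q_{UR}\setminus\{E^*\}=\mathcal{D}_2$ lies strictly above both; hence the direction field \eqref{SgnDeltax}--\eqref{SgnDeltay} gives $X_t$ and $Y_t$ both increasing on $\mathcal{D}_1$ and both decreasing on $\mathcal{D}_2$. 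Throughout I use that orbits with $X_0,Y_0>0$ remain in the open quadrant and are bounded.

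First I would show $Q_{LR}$ is positively invariant and is absorbed by $E_1$. The key inequalities come from Lemma~\ref{LemsignCase3}: for $(X_t,Y_t)\in\mathcal{R}_{1_2}$, part~(b) gives $\mathcal{L}_h<0$, i.e.\ $Y_{t+1}<h(X_{t+1})$, and since $X_{t+1}>X_t>X^*$ we get $h(X_{t+1})<Y^*$, so the next iterate satisfies $X_{t+1}>X^*,\ Y_{t+1}<Y^*$; for $(X_t,Y_t)\in\mathcal{R}_{3_2}$, part~(c) gives $Y_{t+1}>k(X_{t+1})$, which together with $Y_{t+1}<Y_t<Y^*$ forces $X_{t+1}\ge X^*$. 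With Proposition~\ref{PropinvariantC3} (invariance of $\mathcal{R}_2$), these show $Q_{LR}$ is positively invariant. The only equilibria in $\overline{Q_{LR}}$ are $E^*$ (the corner $(X^*,Y^*)$) and $E_1$; every orbit either enters the invariant set $\mathcal{R}_2$, after which $X_t\uparrow,\ Y_t\downarrow$ and the persistent strict inequality $X_t>X^*$ rules out $E^*$, or converges to $E_1$ without leaving $\mathcal{R}_{3_2}$. In both cases the limit is $E_1$. The analogous argument using parts~(a) and~(d) of Lemma~\ref{LemsignCase3} shows $Q_{UL}$ is positively invariant and every orbit in it converges to $E_2$.

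The main obstacle is the diagonal quadrants $\mathcal{D}_1$ and $\mathcal{D}_2$, where Fig.~\ref{Fig:C3general}b) shows the next-iterate operators need not be sign-definite, so a priori an orbit could oscillate about $E^*$ forever. This is exactly what Lemma~\ref{box1} forbids, and the plan is to turn its content into convergence. In $\mathcal{D}_1$ the iterates increase componentwise and are bounded by $(X^*,Y^*)$; moreover any next iterate in the open set $\{X>X^*,\,Y>Y^*\}$ would lie above both nullclines, hence in $\mathcal{D}_2$, which Lemma~\ref{box1} prohibits. Therefore an orbit in $\mathcal{D}_1$ either stays in $\mathcal{D}_1$ for all time---so the bounded monotone sequence converges to the unique interior equilibrium of $\overline{Q_{LL}}$, namely $E^*$---or it first crosses $X=X^*$ into $Q_{LR}$ or $Y=Y^*$ into $Q_{UL}$ and is then governed by the previous paragraph. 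Because $\mathcal{D}_1\to\mathcal{D}_2$ is a forbidden transition, the only possible alternation is $\mathcal{D}_2\to\mathcal{D}_1$, which cannot be reversed; hence an orbit in $\mathcal{D}_2$ (where $X_t,Y_t$ decrease) either converges to $E^*$, escapes directly into $Q_{LR}\cup Q_{UL}$, or drops once into $\mathcal{D}_1$ and is handled there. In every case the orbit converges to $E_1$, $E_2$, or $E^*$, establishing the first assertion.

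Finally, for the stability claims: $E_1$ and $E_2$ are locally asymptotically stable because $Q_{LR}$ and $Q_{UL}$ are positively invariant neighbourhoods (relative to $[0,\infty)^2$) of $E_1$ and $E_2$ on which every orbit converges to the respective equilibrium; a small positively invariant box about each, built from the direction field as in the proof of Theorem~\ref{Thm:hequalk}, supplies Lyapunov stability as well. Instability of $E^*$ is immediate: every neighbourhood of $E^*$ contains points of $\mathcal{R}_2\subset Q_{LR}$ and of $\mathcal{R}_4\subset Q_{UL}$, whose orbits converge to $E_1$ and $E_2$ respectively and hence leave the neighbourhood, so $E^*$ cannot be stable. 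The step I expect to be most delicate is the third paragraph---converting the ``no $\mathcal{D}_1\to\mathcal{D}_2$ jump'' statement of Lemma~\ref{box1} into genuine convergence---where the componentwise monotonicity inside each diagonal quadrant together with boundedness of orbits does the real work in ruling out indefinite lingering without convergence to $E^*$.
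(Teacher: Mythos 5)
Your proof is correct and follows essentially the same route as the paper: it rests on the signs from Lemma~\ref{LemsignCase3}, the invariance of $\mathcal{R}_2$ and $\mathcal{R}_4$ from Proposition~\ref{PropinvariantC3}, the forbidden $\mathcal{D}_1\to\mathcal{D}_2$ transition from Lemma~\ref{box1}, and componentwise monotonicity plus boundedness to force convergence in each sub-region. Grouping the sub-regions into the four quadrants about $E^*$ is only a repackaging of the paper's case-by-case analysis (though it usefully makes the positive invariance of $Q_{LR}$ and $Q_{UL}$ explicit); the one step you should add is ruling out an orbit remaining in $\mathcal{R}_{1_2}$ or $\mathcal{R}_{3_2}$ (resp.\ $\mathcal{R}_{1_4}$, $\mathcal{R}_{3_4}$) forever without converging appropriately, which the paper dispatches with the same bounded-monotone-sequence-must-limit-on-an-equilibrium argument you already use inside $\mathcal{D}_1$.
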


\begin{proof}
Since, by Proposition~\ref{PropinvariantC3}, regions $\mathcal{R}_2$ and $\mathcal{R}_4$ are positively invariant, and the direction field in $\mathcal{R}_2$ implies that orbits that enter $\mathcal {R}_2$ converge to $E_1$ and orbits that enter $\mathcal{R}_4$ converge to $E_2$,
it suffices to show that all solutions either converge to $E^*$ or eventually enter $\mathcal{R}_2\cup \mathcal{R}_4$. 

We use Fig.~\ref{Fig:C3general}a)  to discuss the global dynamics of orbits with initial conditions outside of $\mathcal{R}_2\cup \mathcal{R}_4$.

\begin{itemize}
    \item Let $(X_t,Y_t)\in \mathcal{R}_{1_4}$. The gray `--' symbol in that region,  derived from  Lemma~\ref{LemsignCase3}{\it d)}, indicates  $(X_{t+1}, Y_{t+1})$ is below the line $Y=k(X)$. By the direction field in this region,
 $(X_{t+1},Y_{t+1})\in \mathcal{R}_4\cup \mathcal{R}_{1_4}$. If an  orbit were to  remain in $\mathcal{R}_{1_4}$ indefinitely, then it would have to converge to an equilibrium. However, the direction field excludes the convergence to the only equilibrium in this region, $E^*$.  Thus, the orbit must enter $\mathcal{R}_4$ and  then converges to $E_{2}$. 
\item Let $(X_t,Y_t)\in \mathcal{R}_{3_4}$.  
The black `+' symbol in that region,  obtained from Lemma~\ref{LemsignCase3}{\it a)}, implies that $(X_{t+1},Y_{t+1})$  is above the line $Y=h(X)$. Together with the direction field, this implies that $(X_{t+1},Y_{t+1})\in \mathcal{R}_{3_4}\cup \mathcal{R}_4$. 
If an orbit remains in  $\mathcal{R}_{3_4}$ indefinitely, then it could only converge to $E_2$. Otherwise, it enters $\mathcal{R}_4$ and converges to $E_2$.
\item Let $(X_t,Y_t)\in \mathcal{R}_{1_2}$. Based on the black `--' symbol in that region, obtained from
 Lemma~\ref{LemsignCase3}{\it b)}, $(X_{t+1},Y_{t+1})$ is below the line $Y=h(X)$. By the direction field, $(X_{t+1},Y_{t+1})\in \mathcal{R}_2\cup \mathcal{R}_{1_2}$. If an orbit were to remain in $\mathcal{R}_{1_2}$, 
 it would have to converge to an equilibrium. However, the direction field in this region prevents the convergence to the only equilibrium in this region, $E^*$. Thus, the  orbit must  enter $\mathcal{R}_2$ and hence converge to $E_{1}$.  
\item Let $(X_t,Y_t)\in \mathcal{R}_{3_2}$. Based on the gray `+' symbol in that region, obtained from Lemma~\ref{LemsignCase3}{\it c)},   $(X_{t+1},Y_{t+1})$ lies above the nullcline $Y=k(X)$. Thus, $(X_{t+1},Y_{t+1})\in \mathcal{R}_{3_2}\cup \mathcal{R}_2$. If an orbit were to remain indefinitely in $\mathcal{R}_{3_2}$, then it would have to converge to an equilibrium. The direction field in this region reveals that such an orbit would have to converge to $E_1$. Otherwise, the orbit must  enter $\mathcal{R}_2$ and also converge to $E_{1}$. 

\item Let $(X_t,Y_t)\in \mathcal{D}_1$. If the orbit remains indefinitely in 
$\mathcal{D}_1$, then it converges to $E^*$. Otherwise, by Lemma~\ref{box1},
there exists $T>0$ such that 
$(X_T,Y_T)\in \mathcal{D}_1$ and $(X_{T+1},Y_{T+1})\in (0,K_1)\times (0,K_2) \backslash \mathcal{D}_2$. Thus, one of the previous cases applies and hence the orbit converges  to $E_{1}$ or $E_{2}$.

\item  Let $(X_t,Y_t)\in \mathcal{D}_2$. First, assume that $(X_0,Y_0)\in (X^*,K_1)\times  (Y^*,K_2).$ 
 If the orbit remains  in 
$\mathcal{D}_2$ indefinitely, then  from the direction field, it must converge to $E^*$.  Otherwise, the orbit enters one of the other regions and one of the previous cases applies, so that the orbit converges  to $E_{1}$ or $E_{2}$.
  
\end{itemize}

Thus,  any orbit with positive initial conditions converges to one of the equilibria, $E^*$, $E_1$, or $E_2$.

Without calculating the eigenvalues of the Jacobian, we can also conclude that $E_2$ is locally asymptotically stable because  any orbit with initial condition $(X_0,Y_0)\in \mathcal{R}_4\cup \mathcal{R}_{3_4}$  converges to $E_2$ and the convergence is monotone in a neighbourhood of $E_2$.  Similarly, since any orbit with initial condition $(X_0,Y_0)\in  \mathcal{R}_2\cup \mathcal{R}_{3_2}$  converges to $E_1$, $E_1$ is locally asymptotically stable. Since the coexistence equilibrium $E^*$ is on the boundary of  all of these regions, $E^*$ is unstable.  
\end{proof}

We were able to prove the global dynamics based on the augmented phase portrait in Fig.~\ref{Fig:C3general}a), without knowing the precise location of the root-curves. If specific parameter values were however chosen, then the root-curves can be obtained numerically and the signs of the next-iterate operators associated with each of the nullclines can be obtained for the entire first quadrant, see Fig.~\ref{Fig:C3general}b). 
For the specific example with 
$\alpha_1=1$, $\alpha_2=3$, $r_1=\frac{1}{2}$, $r_2=2$, $K_1=2$,  and $K_2=1.3$, the root-curves were obtained.  The black solid curve in  Fig.~\ref{Fig:C3general}b),   represents the root-curve associated with the nonrivial $X$-nullcline and the gray solid curve is the root-curve associated with the nontrivial $Y$-nullcline.    Once the root-curves are included, the corresponding signs of the next-iterate operators  can be added in every region.  Fig.\ref{Fig:C3general}b),   highlights that orbits in the $\mathcal{D}_2$ region cannot jump into $\mathcal{D}_1$. For example, an the next iterate of an orbit in $\mathcal{D}_2$ where  black and gray `+' symbols are, must remain above both nullclines. Similarly, the next iterate of an orbit in $\mathcal{D}_1$, where  black and gray `--' symbols are, must  remain below both nullclines and can therefore not enter $\mathcal{D}_2$. 
Since the proof of Theorem \ref{GAC3} was only based on Fig.~\ref{Fig:C3general}a), not all signs of the next-iterate operators are necessary to determine the global dynamics.

\subsubsection{Case IV: $C_{12}, C_{21}>0$}

In this case,   as in Case III, the nontrivial $X${\color{red}-} and $Y${\color{red}-}nullclines intersect exactly once in the interior of the  first quadrant, and so there exists a unique  coexistence equilibrium $E^*=(X^*,Y^*)$ with $X^*,Y^*>0$. 
 However, since  $C_{12},C_{21}>0$, 
\begin{equation}\label{signhkC4}
    h(X)-k(X)\, \quad \begin{cases} \quad
    > \quad 0, \quad & \quad \mbox{if }  \,X<X^*,\\
   \quad  =\quad 0, & \quad \mbox{if }  \,X=X^*,\\
    \quad < \quad 0, & \quad \mbox{if }  \,X>X^*.
    \end{cases} 
\end{equation}

We can again divide the first quadrant into the four regions, 
\begin{alignat*}{2}
    &\mathcal{R}_1:=\{(X,Y)\in \mathbb{R}_+^2\colon Y<\min\{h(X),k(X)\}\},\,
    &\mathcal{R}_2:=\{(X,Y)\in \mathbb{R}_+^2\colon h(X)\leq Y\leq k(X)\}\backslash \{E^*\},\\
    &\mathcal{R}_3:=\{(X,Y)\in \mathbb{R}_+^2\colon Y>\max\{h(X),k(X)\}\},\,
    &\mathcal{R}_4:=\{(X,Y)\in \mathbb{R}_+^2\colon k(X)\leq Y\leq h(X)\}\backslash \{E^*\}.\\
\end{alignat*}

  The proof of the next Lemma is provided in Appendix \ref{PfLemsignCase4}.

\begin{lemma}\label{LemsignCase4}
 Assume $C_{12}, C_{21}>0$.
\begin{enumerate} 
\item[{\it a)}] $\mathcal{L}_h(X,Y)<0$ for $(X,Y)\in \mathcal{R}_4\cup  \mathcal{R}_{1_4}$,  where $\mathcal{R}_{1_4}:=\mathcal{R}_1\cap \{(X,Y)\colon \, Y>Y^*\}$. 
\item[{\it b)}] $\mathcal{L}_h(X,Y)>0$ for $(X,Y)\in \mathcal{R}_2\cup \mathcal{R}_{3_2}$,  where $\mathcal{R}_{3_2}:=\mathcal{R}_3\cap \{(X,Y)\colon \, Y<Y^*\}$.
\item[{\it c)}] $\mathcal{L}_k(X,Y)>0$ for $(X,Y) \in \mathcal{R}_4 \cup \mathcal{R}_{3_4}$,  where  $\mathcal{R}_{3_4}:=\mathcal{R}_3\cap \{(X,Y)\colon \, X<X^*\}$. 
\item[{\it d)}] $\mathcal{L}_k(X,Y)<0$ for $(X,Y)\in \mathcal{R}_2 \cup\mathcal{R}_{1_2} $, where $\mathcal{R}_{1_2}:=\mathcal{R}_1\cap \{(X,Y)\colon \, X>X^*\}$.
\end{enumerate}
\end{lemma}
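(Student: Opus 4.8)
The plan is to reduce every sign assertion to the sign of an explicit quadratic and then to determine that sign region by region. By \eqref{calcLs}, $\mathcal{L}_h=N_h/D_h$ and $\mathcal{L}_k=N_k/D_k$, where the denominators $D_h=\alpha_1(K_1+r_1X+\alpha_1K_1Y)(K_2+\alpha_2K_2X+r_2Y)$ and $D_k=r_2(K_1+r_1X+\alpha_1K_1Y)(K_2+\alpha_2K_2X+r_2Y)$ are strictly positive for $(X,Y)\in[0,\infty)^2$. Hence $\operatorname{sign}\mathcal{L}_h=\operatorname{sign}N_h$ and $\operatorname{sign}\mathcal{L}_k=\operatorname{sign}N_k$ throughout the first quadrant, and it suffices to control the two quadratics $N_h,N_k$ (written out in Appendix~\ref{PfcalcLs}) on the stated regions. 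I would prove parts a) and b) for $N_h$ first; parts c) and d) are the corresponding statements for $N_k$ with the roles of $X^*$ and $Y^*$ and of the two nullclines interchanged, in accordance with the species-swap structure of \eqref{Compete}, and I would obtain them by the identical computation applied to $N_k$.

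First I would fix the sign of $\mathcal{L}_h$ along the curves bounding the regions. On the nontrivial $X$-nullcline, Lemma~\ref{usefulhk} gives $\mathcal{L}_h(X,h(X))>0$ exactly where $h(X)<k(X)$ and $\mathcal{L}_h(X,h(X))<0$ exactly where $k(X)<h(X)$; by \eqref{signhkC4} these are the rays $X>X^*$ and $X<X^*$, which already matches the claimed signs on the nullcline pieces bounding $\mathcal{R}_2$ (lower boundary, $X>X^*$) and $\mathcal{R}_4$ (upper boundary, $X<X^*$). The signs on the axis pieces come from \eqref{LhX0} and \eqref{LhY0}. By Lemma~\ref{Lemnoint}a) the root-curve $S_h=\{\mathcal{L}_h=0\}$ meets the $X$-nullcline only at the equilibria $E_1$ and $E^*$, and by Remark~\ref{intersectgen} together with the Case~IV analogue of Lemma~\ref{rootintersectD1} its intersections with $S_k$ are constrained; this pins down the topology of the conic $S_h$. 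The decisive point is then to show $S_h$ does not enter $\mathcal{R}_4\cup\mathcal{R}_{1_4}$ or $\mathcal{R}_2\cup\mathcal{R}_{3_2}$: once $\mathcal{L}_h$ is nonvanishing on the interior of such a region, its sign is constant there and is read off from the boundary value above. Equivalently, and more directly, one substitutes the region-defining inequalities ($Y$ versus $h(X)$, $Y$ versus $k(X)$, $Y$ versus $Y^*$, and $C_{12},C_{21}>0$) into $N_h$ and rearranges it into a combination of manifestly signed terms.

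The main obstacle is exactly this last step: the sign determination of a genuine quadratic $N_h$ on a two-dimensional region rather than merely on a nullcline. A pure direction-field argument cannot succeed, since for a point of $\mathcal{R}_2$ one gets $X_{t+1}\le X_t$ and $Y_{t+1}\ge Y_t$ from \eqref{SgnDeltax}--\eqref{SgnDeltay}, but as $h$ is decreasing this yields only $h(X_{t+1})\ge h(X_t)$, which is too weak to conclude $Y_{t+1}>h(X_{t+1})$; the next-iterate operator, i.e.\ the explicit quadratic, is genuinely needed. Moreover the regions of $\mathcal{R}_1$ and $\mathcal{R}_3$ adjacent to $E^*$ (the analogues of $\mathcal{D}_1$ and $\mathcal{D}_2$) must be excluded precisely because $S_h$ may pass through them, so the sign of $\mathcal{L}_h$ is not constant there, as Fig.~\ref{Fig:C3general}b) already illustrates for Case~III; any correct argument must therefore use the restrictions $Y\gtrless Y^*$ (for $\mathcal{L}_h$) and $X\gtrless X^*$ (for $\mathcal{L}_k$) in an essential way. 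I would carry out the rearrangement of $N_h$, and then of $N_k$, under these restrictions, relegating the algebra to Appendix~\ref{PfLemsignCase4}, and finish parts c) and d) by the same computation for $N_k$.
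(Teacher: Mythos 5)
Your reduction to the sign of the quadratic numerators, your boundary computations (signs of $\mathcal{L}_h$ on the nullcline via Lemma~\ref{usefulhk} and \eqref{signhkC4}, and on the axes via \eqref{LhX0} and \eqref{LhY0}), and your identification of which equilibria the root-set can pass through via Lemma~\ref{Lemnoint} all match the paper's proof. But the step you yourself flag as ``the decisive point'' --- showing $S_h$ does not enter $\mathcal{R}_4\cup\mathcal{R}_{1_4}$ or $\mathcal{R}_2\cup\mathcal{R}_{3_2}$ --- is exactly the step you do not carry out, and the mechanism the paper uses for it is absent from your proposal. The paper's key observation is that, because $C_{12}>0$ forces $A_0(Y)<0$ for all $Y\in\left(0,\tfrac{r_1}{\alpha_1}\right)$ while $A_2>0$, the quadratic $N_h(\cdot,Y)$ in $X$ has one negative and exactly one positive root for each such fixed $Y$; hence along every horizontal line in the strip, $\mathcal{L}_h$ changes sign exactly once for $X>0$. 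Combined with $\mathcal{L}_h(0,Y)<0$ and the sign of $\mathcal{L}_h$ on the nullcline $X=h^{-1}(Y)$ (negative for $Y>Y^*$, positive for $Y<Y^*$), this forces the unique positive root-curve $X=R_{h_1}(Y)$ to lie to the right of the nullcline when $Y>Y^*$ and to its left when $Y<Y^*$, which immediately gives constant signs on the stated regions. Parts c) and d) are done in the paper by the same one-positive-root argument applied to $N_k$ as a quadratic in $Y$ (via $b_0(X)<0$, $b_2>0$), not by a formal species swap.

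Your proposed substitute for this step --- ``substitute the region-defining inequalities into $N_h$ and rearrange it into a combination of manifestly signed terms'' --- is an unexecuted computation, and it is not evident that such a rearrangement exists: determining the sign of a genuine quadratic on a two-dimensional region is precisely the difficulty, and the paper circumvents it not by algebraic manipulation of $N_h$ on the region but by counting positive roots along each horizontal slice and anchoring the single sign change with the boundary data. Without either carrying out that rearrangement or supplying the root-counting argument, the proof is incomplete at its central step.
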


Fig.~\ref{Fig:C4general}a) includes the information obtained in Lemma~\ref{LemsignCase4} about the signs of the next-iterate operators. As for Case III, we do not need to determine the sign of the next-iterate operators in regions  $\mathcal{D}_1$ and $\mathcal{D}_2$ to determine the global dynamics.

\begin{proposition}\label{invariantC4}
If $C_{12},C_{21}>0$, then regions $\mathcal{R}_2$ and $\mathcal{R}_4$ are positively invariant.
\end{proposition}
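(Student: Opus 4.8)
The plan is to mimic the proof of Proposition~\ref{PropinvariantC3} almost verbatim, since the sign information in Lemma~\ref{LemsignCase4} plays exactly the role that Lemma~\ref{LemsignCase3} did in Case III. The whole argument consists of translating the sign of a next-iterate operator at $(X_t,Y_t)$ into a statement about which side of a nullcline the image $(X_{t+1},Y_{t+1})$ lies on, using the equivalences recorded immediately after Definition~\ref{def:L_ell}. I would work directly from the augmented phase portrait in Fig.~\ref{Fig:C4general}a).

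First I would fix $(X_t,Y_t)\in\mathcal{R}_2$, so that $h(X_t)\le Y_t\le k(X_t)$. By Lemma~\ref{LemsignCase4}b), all of $\mathcal{R}_2$ lies in the set where $\mathcal{L}_h>0$, which by Definition~\ref{def:L_ell} means the next iterate lies strictly above $Y=h(X)$, i.e.\ $Y_{t+1}>h(X_{t+1})$. By Lemma~\ref{LemsignCase4}d), all of $\mathcal{R}_2$ also lies where $\mathcal{L}_k<0$, so the next iterate lies strictly below $Y=k(X)$, i.e.\ $Y_{t+1}<k(X_{t+1})$. Combining, $h(X_{t+1})<Y_{t+1}<k(X_{t+1})$, so $(X_{t+1},Y_{t+1})\in\mathcal{R}_2$. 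Symmetrically, for $(X_t,Y_t)\in\mathcal{R}_4$ I would invoke Lemma~\ref{LemsignCase4}a) to get $\mathcal{L}_h<0$ (image below $Y=h(X)$) and Lemma~\ref{LemsignCase4}c) to get $\mathcal{L}_k>0$ (image above $Y=k(X)$), yielding $k(X_{t+1})<Y_{t+1}<h(X_{t+1})$ and hence $(X_{t+1},Y_{t+1})\in\mathcal{R}_4$.

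Two bookkeeping points remain. Positivity of the coordinates is preserved because the interior of the first quadrant is invariant (see the discussion following Theorem~\ref{thm:E0}), so $(X_{t+1},Y_{t+1})\in\mathbb{R}_+^2$. Moreover, the strict inequalities just derived show that the image lands strictly between the two nullclines and therefore on neither of them; in particular it cannot equal $E^*$, so the image stays in the punctured region $\mathcal{R}_2$ (resp.\ $\mathcal{R}_4$) rather than merely its closure.

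There is no genuine obstacle here: the one thing worth verifying is simply that the sign statements of Lemma~\ref{LemsignCase4}a)--d) already cover \emph{all} of $\mathcal{R}_4$ and $\mathcal{R}_2$, respectively, so that no point of these regions is left unaccounted for. Indeed, the extra pieces $\mathcal{R}_{1_4},\mathcal{R}_{3_2},\mathcal{R}_{3_4},\mathcal{R}_{1_2}$ appearing in Lemma~\ref{LemsignCase4} are not needed for this proposition. Once Lemma~\ref{LemsignCase4} is in hand, the invariance of $\mathcal{R}_2$ and $\mathcal{R}_4$ follows immediately, and the relevant signs can be read off Fig.~\ref{Fig:C4general}a).
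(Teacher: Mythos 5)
Your proof is correct and is essentially identical to the paper's own argument: both fix a point in $\mathcal{R}_2$ (resp.\ $\mathcal{R}_4$), invoke Lemma~\ref{LemsignCase4}b) and d) (resp.\ a) and c)) to place the next iterate strictly between the two nontrivial nullclines, and conclude invariance from the augmented phase portrait in Fig.~\ref{Fig:C4general}a). The extra bookkeeping you include (positivity of the image and exclusion of $E^*$ via the strict inequalities) is a harmless refinement of what the paper leaves implicit.
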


\begin{proof}
We use the  augmented phase portrait shown in Fig.~\ref{Fig:C4general}a).
Let $(X_t,Y_t)\in \mathcal{R}_2$. Based on the black `+' symbol in $\mathcal{R}_2$,  obtained from  Lemma~\ref{LemsignCase4}{\it b)}, $\mathcal{L}_h(X_t,Y_t)>0$, so that $(X_{t+1},Y_{t+1})$ remains  above the  nullcline $Y=h(X)$. The gray `--' symbol in $\mathcal{R}_2$, derived from Lemma~\ref{LemsignCase4}{\it d)}, indicates that $\mathcal{L}_k(X_t,Y_t)<0$. Thus,  $(X_{t+1},Y_{t+1})$ remains below  the nullcline $Y=k(X)$. Thus, $(X_{t+1},Y_{t+1})\in \mathcal{R}_2$.

Next, let $(X_t,Y_t)\in \mathcal{R}_4$. The black `--' symbol in this region, derived from  Lemma~\ref{LemsignCase4}{\t a)}, indicates that $\mathcal{L}_h(X,Y)<0$ and so $(X_{t+1},Y_{t+1})$ remains below the nullcline $Y=h(X)$. 
Also, the gray `+' symbol in this region, derived from  Lemma~\ref{LemsignCase4}{\it c)},  indicates that $\mathcal{L}_k(X_t,Y_t)>0$, so that $(X_{t+1},Y_{t+1})$ remains above the nullcline $Y=k(X)$. Thus,\\ $(X_{t+1},Y_{t+1}) \in \mathcal{R}_4$, completing the proof.
\end{proof}

\begin{figure}[!ht]
    \centering
    a) \hspace{75mm} b)
    
       \includegraphics[trim= .5cm 0cm 0cm 1cm, clip,scale=0.3]{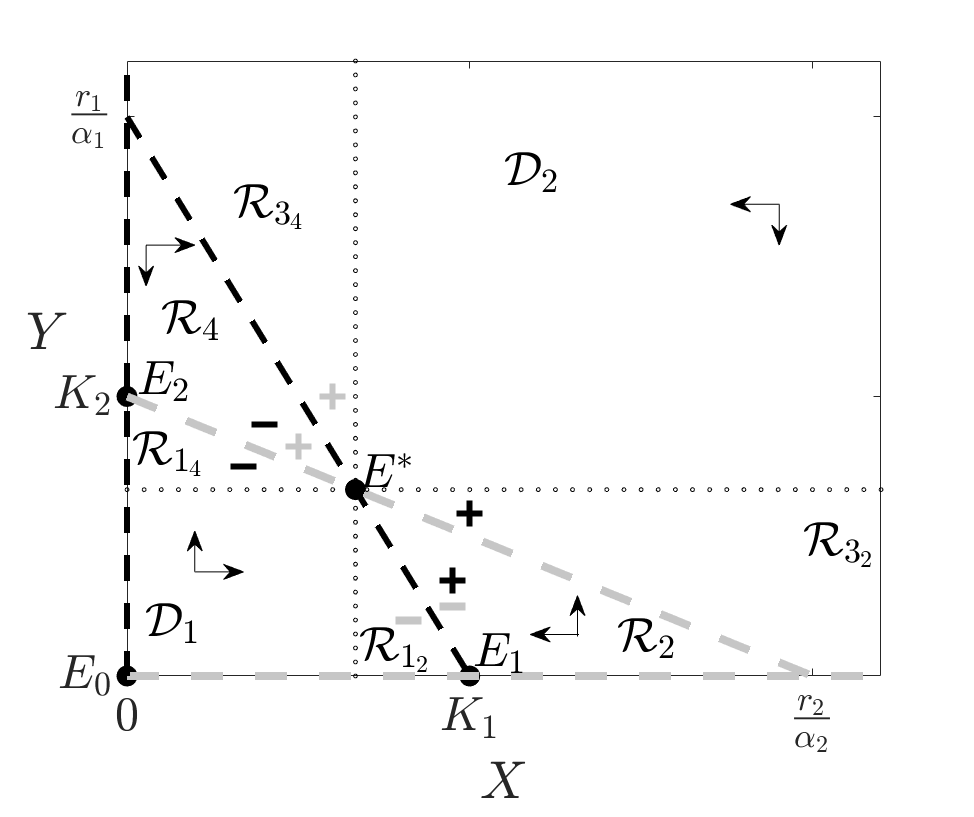}
       \includegraphics[trim= .4cm 0cm 0cm 0cm, clip,scale=0.315]{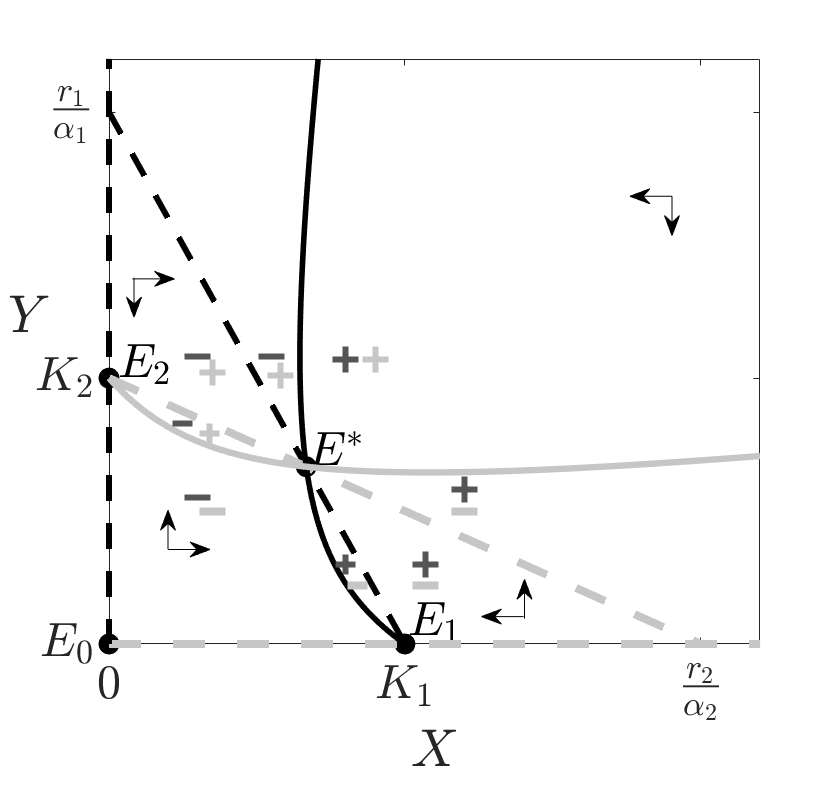}
     \caption{ 
     Augmented phase portraits in  the case  when $C_{12}, C_{21}>0$, where $\mathcal{R}_1=\mathcal{D}_1\cup \mathcal{R}_{1_4}\cup \mathcal{R}_{1_2}$ and $\mathcal{R}_3=\mathcal{D}_2\cup \mathcal{R}_{3_2}\cup \mathcal{R}_{3_4}$. In a), the standard phase portrait  is augmented by including only the  signs (based on the results in Lemma~\ref{LemsignCase4}) of the next-iterate operators associated with the nullclines $Y=h(X)$ and $Y=k(X)$, necessary to determine the global stability of $E^*$ and is prototypical for this case.  
     An example including root-curves is shown in b)  with parameters, $\alpha_1=\alpha_2=1$, $r_1=r_2=2$, $K_1=K_2=1$.
 } 
    \label{Fig:C4general}
\end{figure}

 As for Lemma~\ref{box1}, the proof of the following result that shows that there is no 
 `++' region in $\mathcal{D}_1$, is 
 obtained using the augmented phase portrait,  in this case, shown in Fig.~\ref{Fig:C4general}a) (see details in Appendix \ref{Pfbox2}).

\begin{lemma}\label{box2}
Let $C_{12}, C_{21}>0$. If $(X,Y)\in \mathcal{D}_1$, then $(F(X,Y),G(X,Y))\notin \mathcal{D}_2$, where $\mathcal{D}_1$ and $\mathcal{D}_2$ are defined in \eqref{DefregionsD}.
\end{lemma}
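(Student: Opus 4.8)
The plan is to follow the same strategy as in Lemma~\ref{box1}, reducing this dynamical statement to a purely sign-based one about the two next-iterate operators on $\mathcal{D}_1$. Since $\mathcal{D}_2\subseteq\mathcal{R}_3$ and $\mathcal{R}_3=\{(X,Y)\colon Y>\max\{h(X),k(X)\}\}$, a point $(X,Y)$ whose image lies in $\mathcal{D}_2$ must be mapped strictly above both nontrivial nullclines; by Definition~\ref{def:L_ell} this is exactly the requirement that $\mathcal{L}_h(X,Y)>0$ and $\mathcal{L}_k(X,Y)>0$. Hence it suffices to prove that $\mathcal{D}_1$ contains no such doubly positive (`$++$') point. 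Note this is actually stronger than the stated conclusion, since it rules out mapping anywhere into $\mathcal{R}_3$, not merely into $\mathcal{D}_2$.

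First I would settle the boundary of $\mathcal{D}_1$, which by \eqref{DefregionsD} is, up to the punctured corner $E^*$, the box $\{0<X\le X^*,\ 0<Y\le Y^*\}\setminus\{E^*\}$. On the bottom edge $Y=0$ we already have $\mathcal{L}_h(X,0)<0$ for $X\in(0,X^*]\subset[0,K_1)$ (the fact recorded just after \eqref{LkY0}), and on the left edge $X=0$ we have $\mathcal{L}_k(0,Y)<0$ for $Y\in(0,Y^*]\subset[0,K_2)$ by \eqref{LkY0}. On the top edge $Y=Y^*$, Lemma~\ref{LemsignCase4}a) gives $\mathcal{L}_h<0$ throughout $\mathcal{R}_{1_4}$, so by continuity $\mathcal{L}_h\le 0$ there; on the right edge $X=X^*$, Lemma~\ref{LemsignCase4}d) gives $\mathcal{L}_k<0$ throughout $\mathcal{R}_{1_2}$, so $\mathcal{L}_k\le 0$ there. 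Thus on every edge at least one operator is nonpositive, and no `$++$' point lies on $\partial\mathcal{D}_1$.

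The core of the argument is to exclude a `$++$' point in the interior, where the individual signs of $\mathcal{L}_h$ and $\mathcal{L}_k$ are a priori indeterminate (indeed Fig.~\ref{Fig:C4general}b) shows a sign can change inside these boxes). Here I would invoke Lemma~\ref{rootintersectD1}, which says that $S_h$ and $S_k$ meet inside $\{0<X\le X^*,\ 0<Y\le Y^*\}$ only at $E^*$. Suppose, for contradiction, that the open set $A=\{(X,Y)\in\mathcal{D}_1\colon \mathcal{L}_h>0,\ \mathcal{L}_k>0\}$ is nonempty, and take a connected component $A_0$. By the previous paragraph $A_0$ meets $\partial\mathcal{D}_1$ in no `$++$' point, so the boundary $\partial A_0$ consists of arcs of $S_h$ (along which $\mathcal{L}_h=0,\ \mathcal{L}_k\ge 0$) and arcs of $S_k$ (along which $\mathcal{L}_k=0,\ \mathcal{L}_h\ge 0$). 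A transition between an $S_h$-arc and an $S_k$-arc can occur only at a common zero in $S_h\cap S_k$, which by Lemma~\ref{rootintersectD1} is the single point $E^*$. Consequently $\overline{A_0}$ must abut $E^*$, and the whole question localizes to the sign configuration of $(\mathcal{L}_h,\mathcal{L}_k)$ at $E^*$: using the common positive denominator in \eqref{calcLs}, so that the signs are those of the quadratic numerators $N_h,N_k$, one checks that the cone on which both are positive opens toward the upper-right ($\mathcal{R}_3$) side of $E^*$ rather than into the lower-left $\mathcal{D}_1$ direction. This contradicts $A_0\subseteq\mathcal{D}_1$ and finishes the proof.

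I expect the interior step to be the main obstacle. The boundary signs are immediate from \eqref{LhX0}, \eqref{LkY0}, and Lemma~\ref{LemsignCase4}, but taming the indeterminate interior requires the non-crossing of the two root-curves furnished by Lemma~\ref{rootintersectD1} together with a local analysis at $E^*$; carrying the latter out rigorously amounts to a short but unavoidable computation with the explicit numerators $N_h$ and $N_k$ of \eqref{calcLs}. A more computational alternative, kept in reserve should the connectedness argument need to be made fully precise, is to substitute the explicit quadratics and show directly that the semialgebraic set $\{N_h>0,\ N_k>0\}$ is disjoint from $\mathcal{D}_1$; this is elementary but messier.
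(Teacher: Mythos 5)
Your opening reduction is exactly the paper's: a point of $\mathcal{D}_1$ can only land in $\mathcal{D}_2\subseteq\mathcal{R}_3$ if both $\mathcal{L}_h$ and $\mathcal{L}_k$ are positive there, so it suffices to rule out a `$++$' sub-region of $\mathcal{D}_1$. The boundary signs you record are also correct. The problem is the interior step. Your topological argument claims that any connected component $A_0$ of the `$++$' set must have both an $S_h$-arc and an $S_k$-arc on its boundary, hence a ``transition point'' in $S_h\cap S_k$, hence must abut $E^*$. That inference is not valid: a component of $\{\mathcal{L}_h>0\}\cap\{\mathcal{L}_k>0\}$ can be bounded entirely by one root-set (for instance, if a lobe of $\{\mathcal{L}_h>0\}$ sits wholly inside $\{\mathcal{L}_k>0\}$, then $\partial A_0\subseteq S_h$ and no point of $S_h\cap S_k$ is involved), so Lemma~\ref{rootintersectD1} alone does not force $\overline{A_0}$ to reach $E^*$. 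And even for components that do reach $E^*$, the decisive claim --- that the `$++$' cone at $E^*$ opens into $\mathcal{R}_3$ rather than into $\mathcal{D}_1$ --- is asserted but not verified; it is not a consequence of anything established earlier and would require exactly the kind of computation with $N_h,N_k$ that you defer. So as written the proof has a genuine gap at its central step, and the ``reserve'' alternative (showing $\{N_h>0,N_k>0\}\cap\mathcal{D}_1=\emptyset$ directly) is an intention, not an argument.

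The ingredient you are missing is the one the paper leans on: the root-sets in the relevant strips are \emph{graphs of single functions}, with known axis intercepts and with the sign of the associated operator known on one whole side. Concretely, from the proof of Lemma~\ref{LemsignCase4}{\it a)},{\it b)}, $S_h$ is given by the unique positive root-curve $X=R_{h_1}(Y)$ for $0<Y<\frac{r_1}{\alpha_1}$, with $R_{h_1}(0)\in(0,K_1]$, and the black `$-$' signs in $\mathcal{R}_4\cup\mathcal{R}_{1_4}$ force $\mathcal{L}_h<0$ everywhere to the left of this curve; symmetrically, $S_k$ is the graph $Y=r_{k_1}(X)$ for $0<X<\frac{r_2}{\alpha_2}$, with $r_{k_1}(0)\in(0,K_2]$, and $\mathcal{L}_k>0$ only above it. A `$++$' point of $\mathcal{D}_1$ would therefore have to lie to the right of $X=R_{h_1}(Y)$ and above $Y=r_{k_1}(X)$, which requires $Y=r_{k_1}(X)$ to drop below $X=R_{h_1}(Y)$ somewhere in $\mathcal{D}_1$; comparing the axis intercepts shows the curves start in the opposite order, and Lemma~\ref{rootintersectD1} (no crossing in $\mathcal{D}_1$) prevents them from ever swapping. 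This graph structure is what eliminates the ``lobe'' configurations your argument cannot exclude, and it replaces the local analysis at $E^*$ entirely.
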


\begin{theorem}\label{GAC4}
If $C_{12},C_{21}>0$, then  $E^*$ is globally asymptotically stable with respect to  orbits with $X_0,Y_0>0$. Furthermore, $E_0$ is a repeller, and $E_1$ and $E_2$ are saddles.
\end{theorem}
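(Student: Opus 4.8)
The plan is to mirror the structure of the proof of Theorem~\ref{GAC3}, exploiting the symmetry with Case~III. By Proposition~\ref{invariantC4}, regions $\mathcal{R}_2$ and $\mathcal{R}_4$ are positively invariant, and the direction field in each (obtained from \eqref{SgnDeltax} and \eqref{SgnDeltay}) forces component-wise monotone convergence toward $E^*$ once an orbit enters either region. Therefore it suffices to show that every orbit with $X_0,Y_0>0$ either converges to $E^*$ directly or eventually enters $\mathcal{R}_2\cup\mathcal{R}_4$. I would handle the subregions $\mathcal{R}_{1_2},\mathcal{R}_{1_4},\mathcal{R}_{3_2},\mathcal{R}_{3_4}$ one at a time, using the relevant sign from Lemma~\ref{LemsignCase4} together with the direction field to argue that an orbit cannot remain in such a region indefinitely (since the only candidate limit would be $E^*$, which the monotonicity excludes for these boundary strips) and hence must cross into $\mathcal{R}_2$ or $\mathcal{R}_4$.

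The central obstacle is the behavior of orbits starting in $\mathcal{D}_1$ or $\mathcal{D}_2$, exactly as in Case~III. Here the sign of a next-iterate operator is not controlled throughout these regions, so an orbit might in principle oscillate between $\mathcal{D}_1$ and $\mathcal{D}_2$. This is precisely what Lemma~\ref{box2} rules out: no point of $\mathcal{D}_1$ is mapped into $\mathcal{D}_2$. Using Lemma~\ref{box2} I would argue that an orbit in $\mathcal{D}_1$ either stays in $\mathcal{D}_1$ forever (and then converges to $E^*$ by the direction field) or at some iterate $T$ leaves $\mathcal{D}_1$ to a point of $(0,K_1)\times(0,K_2)\setminus\mathcal{D}_2$, at which point one of the earlier subregion cases applies. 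A symmetric argument using the analogue of Lemma~\ref{box2} with the roles of $\mathcal{D}_1$ and $\mathcal{D}_2$ interchanged (or starting the orbit appropriately) handles $\mathcal{D}_2$, so orbits there either converge to $E^*$ or fall into a previously handled case.

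For the stability conclusions I would avoid the Jacobian entirely. Since orbits in the positively invariant regions $\mathcal{R}_2$ and $\mathcal{R}_4$ converge monotonically to $E^*$, and since the analysis of the surrounding strips shows that a full neighbourhood of $E^*$ is drawn into $\mathcal{R}_2\cup\mathcal{R}_4$ and thence to $E^*$, the equilibrium $E^*$ is locally asymptotically stable; combined with the global convergence just established, $E^*$ is globally asymptotically stable with respect to all orbits with $X_0,Y_0>0$. The boundary equilibria are handled via Theorem~\ref{thm:E0} and the direction field: on the $X$-axis orbits converge to $E_1$ and on the $Y$-axis to $E_2$, but any perturbation into the interior is carried away toward $E^*$, so $E_1$ and $E_2$ are saddles, while both coordinate directions leave $E_0$, making it a repeller.

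I expect the routine part to be the subregion-by-subregion bookkeeping, which is entirely parallel to Theorem~\ref{GAC3} after swapping which region ($\mathcal{R}_2$ versus $\mathcal{R}_4$) feeds which equilibrium; the genuinely delicate step is invoking Lemma~\ref{box2} correctly to break any potential $\mathcal{D}_1$–$\mathcal{D}_2$ oscillation, and verifying that once an orbit leaves $\mathcal{D}_1$ the landing region is one already shown to funnel into $\mathcal{R}_2\cup\mathcal{R}_4$. The main conceptual point, as emphasized throughout the paper, is that this no-jump information comes from the signs of the next-iterate operators in the augmented phase portrait of Fig.~\ref{Fig:C4general}a), not from the standard phase portrait alone.
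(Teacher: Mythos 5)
Your proposal is correct and follows essentially the same route as the paper: positive invariance of $\mathcal{R}_2$ and $\mathcal{R}_4$ via Proposition~\ref{invariantC4}, a region-by-region sweep of $\mathcal{R}_{1_2},\mathcal{R}_{1_4},\mathcal{R}_{3_2},\mathcal{R}_{3_4}$ using Lemma~\ref{LemsignCase4} and the direction field, and Lemma~\ref{box2} to break any $\mathcal{D}_1$--$\mathcal{D}_2$ oscillation. The only small difference is that the paper does not need a symmetric $\mathcal{D}_2\to\mathcal{D}_1$ analogue of Lemma~\ref{box2}: once an orbit leaves $\mathcal{D}_2$ it lands in a region already handled (including $\mathcal{D}_1$, from which Lemma~\ref{box2} forbids any return to $\mathcal{D}_2$), so the one-directional lemma suffices.
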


\begin{proof}
We use the augmented phase portrait in Fig.~\ref{Fig:C4general}a) to show that all solutions with  positive initial conditions  converge to $E^*$. First note that  by Proposition \ref{invariantC4}, $\mathcal{R}_i$, $i=1,2$, are positively invariant  and that the direction field in these regions  implies that any  orbit that enters  either of these two regions converges to $E^*$. 

\begin{itemize}
    \item Let $(X_t,Y_t)\in \mathcal{R}_{1_4}$.  
 The black `--' symbol, obtained from Lemma~\ref{LemsignCase4}{\it a)}, implies that $(X_{t+1},Y_{t+1})$ must remain below the nullcline $Y=h(X)$. The direction field tells us that $(X_{t+1},Y_{t+1})\in \mathcal{R}_{1_4}\cup \mathcal{R}_4$. If an orbit were to remain in $\mathcal{R}_{1_4}$ indefinitely, then it must converge to an equilibrium. However, the direction field in this regions prevents the convergence to the only equilibria in this region.  Hence, there exists $T>0$ such that  $(X_T,Y_T)\in \mathcal{R}_{4}$ and then the orbit must converge to $E^*$. 
 \item Let $(X_t,Y_t)\in \mathcal{R}_{3_4}$. Based on the gray `+' symbol, obtained from  Lemma~\ref{LemsignCase4}{\it c)}, $(X_{t+1}, Y_{t+1})$ must remain above the nullcline $Y=k(X)$.  From  the direction field, it follows that $(X_{t+1},Y_{t+1})\in \mathcal{R}_{3_4}\cup \mathcal{R}_4$. If an orbit were to remain in $\mathcal{R}_{3_4}$ indefinitely, it must converge to an equilibrium. However, the direction field in this region prevents the convergence to the only equilibrium in this region,  $E^*$. Hence, there exists $T>0$ such that  $(X_T,Y_T)\in \mathcal{R}_{4}$ and o the orbit must converge to $E^*$.

    \item Let $(X_t,Y_t)\in \mathcal{R}_{1_2}$. The gray `--' symbol, based on 
    Lemma~\ref{LemsignCase4}{\it d)}, reveals that $(X_{t+1}, Y_{t+1})$ must remain below the nullcline $Y=k(X)$. Thus, with the direction field, it follows that $(X_{t+1},Y_{t+1})\in \mathcal{R}_{1_2}\cup \mathcal{R}_2$. If an orbit were to remain in $\mathcal{R}_{1_2}$ indefinitely, then it must converge to an equilibrium. However, the direction field in this region prevents the convergence to the only two equilibria, $E^*$ and $E_1$. Hence, there exists $T>0$ such that  $(X_T,Y_T)\in \mathcal{R}_{2}$ and the orbit converges to $E^*$. 
    \item Let $(X_t,Y_t)\in \mathcal{R}_{3_2}$. The black `+' symbol, based on 
    Lemma~\ref{LemsignCase4}{\it b)}, implies  that $(X_{t+1},Y_{t+1})$ remains above the nullcline $Y=h(X)$. Together with the direction field,  $(X_{t+1},Y_{t+1})\in \mathcal{R}_{3_2}\cup \mathcal{R}_2$. If an orbit were to remain in $\mathcal{R}_{3_2}$ indefinitely, then it must converge to an equilibrium. However, the direction field in this region prevents the convergence to the only equilibrium $E^*$. Thus, there exists $T>0$ such that  $(X_T,Y_T)\in \mathcal{R}_{2}$ and the orbit converges to $E^*$. 

\item Let $(X_t,Y_t)\in \mathcal{D}_1$. If the orbit remains   in 
$\mathcal{D}_1$ indefinitely, then it must converge to $E^*$. Otherwise, by Lemma~\ref{box2}, there exists $T>0$ such that 
$(X_T,Y_T)\in \mathcal{D}_1$. However, $(X_{T+1},Y_{T+1})\in \left(0,\frac{r_2}{\alpha_2}\right)\times\left(0, \frac{r_1}{\alpha_1}\right)$ and  so one of the previous cases apply. 

\item Let $(X_t,Y_t)\in \mathcal{D}_2$. If the orbit remains in 
$\mathcal{D}_2$ indefinitely, then it converges to $E^*$. Otherwise,  the orbit enters one of the other regions, where   one of the previous cases apply  and the orbit must converge to $E^*$. 
\end{itemize}

Thus, any orbit with positive initial conditions converges to $E^*$ and the convergence is eventually monotone. Hence, $E^*$ is globally asymptotically stable.  That $E_0$ is a repeller and $E_1$ and $E_2$ are saddles also follow from Theorem~\ref{thm:E0} and the augmented phase portrait.
\end{proof}

Theorem \ref{GAC4} does not require the sign of the next-iterate operator in all regions of the first quadrant. However, for specific parameter values, one can graph the root-curves associated with each nullcline, see Fig.~\ref{Fig:C4general}b). Once the root-curves are obtained, the signs of the next-iterate operators can immediately be included in the phase portrait.


\section{Extensions and  Limitations}\label{sec:ext_limits}

While the previous sections focused on the introduction of the augmented phase portrait and how to use it in the analysis of the discrete competition model \eqref{Compete}, the method can easily be used for other planar maps and provides an  elementary tool to obtain  information about the local and  global dynamics of solutions. However, just as for the phase plane approach used for the analysis of planar ordinary differential equations, the  augmented phase  portrait  has its limitations. Some of these are discussed in this section.

\subsection{Example: Ricker  Competition Model}

A popular alternative to \eqref{Compete} is the competitive Ricker map:
\begin{equation}\label{Rickerexample}
    X_{t+1}=X_te^{K-X_t-aY_t}, \qquad \quad
    Y_{t+1}=Y_te^{L-bX_t-Y_t},
\end{equation}
with initial conditions $X_0,Y_0\geq 0$, where $K,L>0$ represent the carrying capacities of competitor $X$ and $Y$, respectively. Here $a,b>0$ describe the competitive factor for population $X$ and $Y$, respectively. For $0<K,L<1$, the  theory of monotone flows  was applied, allowing for conclusions regarding the global dynamics given the local stability of equilibria \cite{Smith1998}. In \cite{CabralBalreira2014}, \eqref{Rickerexample} was revisited and conditions were provided for the global stability of the coexistence equilibrium  under  different  restrictions on the parameters.  Nevertheless, the conjecture that  for \eqref{Rickerexample}, local asymptotic stability always implies global asymptotic stability \cite{Luis2011} remains an open problem.  While the augmented phase plane method cannot be used to prove the conjecture, it can be used to identify positively invariant regions and therefore the global dynamics of orbits entering these regions. In turn, the augmented phase portrait also determines regions where solutions might oscillate.  This might be helpful  to prove or disprove the conjecture.  

\begin{figure}[!bth]
    a) \hspace{8cm} b) \hspace{5cm}
    \centering
\includegraphics[trim= 0cm 0cm 1cm 0cm,clip,scale=0.37]{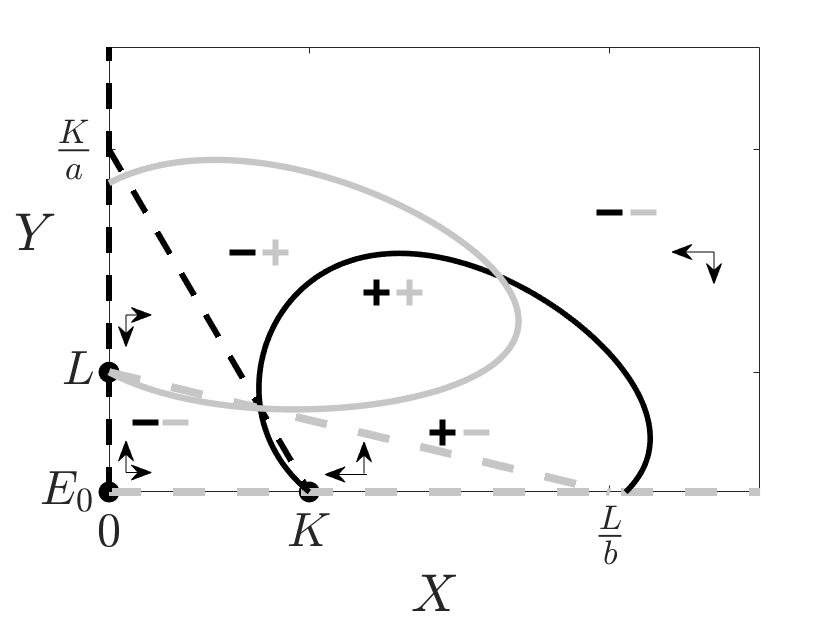}
\includegraphics[trim= 0cm 0cm 2cm 0cm,clip,scale=0.37]{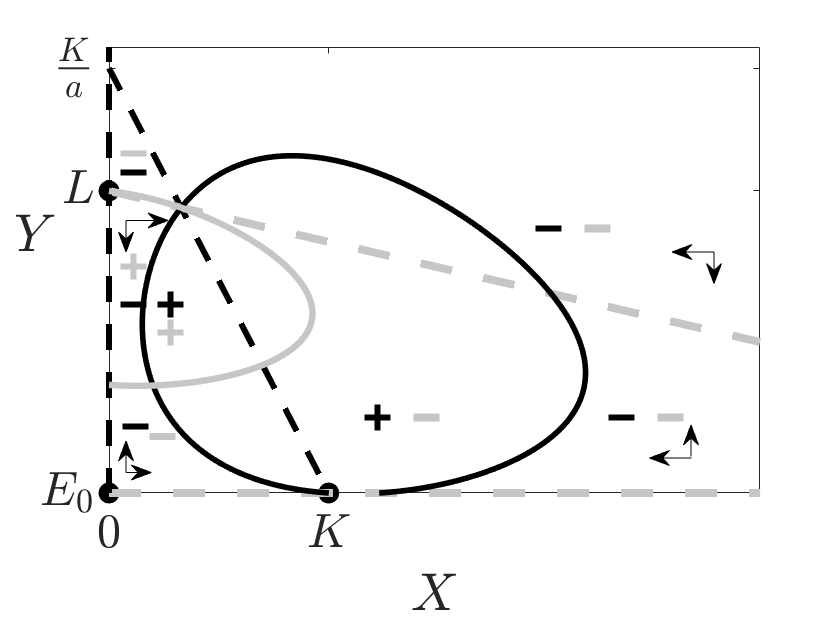}
    \caption{Augmented phase portraits for \eqref{Rickerexample}. In a),  $K=L=0.6$, $a=0.35$, and $b=0.4$. The augmented phase portrait can be used to prove the global asymptotic stability of the coexistence equilibrium.  In b), $K=0.9$, $L=1.6$, $a=0.4$, and $b=0.3$. The augmented phase portrait cannot rule out orbits jumping back and forth between the '++' region below both nullclines and the '-- --' region above them.}
    \label{Fig:Ricker}
\end{figure}


For the specific model parameters chosen in Fig.~\ref{Fig:Ricker}, the root-curves  were  obtained numerically, using the built-in function ``fimplicit'' in Matlab.  In contrast to all of the root-curves we have seen thus far, e.g., Fig.~\ref{Fig:C4general}b),  the root-curves  in Fig.~\ref{Fig:Ricker} are neither functions in $X$ nor $Y$. In this case, the sign of the next-iterate operators depend on whether it is evaluated at a point that is  ``inside'' or ``outside'' of the region bounded by the associated root-curve. For points inside (outside) the region bounded by a root-curve, the corresponding next-iterate operator  is positive (negative), indicating that the next iterate will lie above (below) its associated nullcline. 

From the augmented phase portrait  in Fig.~\ref{Fig:Ricker}a), it is possible to  determine that the coexistence equilibrium is globally asymptotically stable with respect to the interior of the first quadrant.  Based on the signs of the next-iterate operators, the augmented phase portrait identifies two regions as positively invariant: i) \, the triangular region  bounded by the nontrivial nullclines and the $X$-axis with left-corner $K$ and right-corner $\frac{L}{b}$, and 
ii)\, the triangular region bounded by the nontrivial nullclines and the $Y$-axis with the lower $Y$-value $L$ and upper value $\frac{K}{a}$. Orbits entering either one of these two regions remain there, and, due to the direction field, must converge to the coexistence equilibrium. 
The signs of the next-iterate operators together with the direction field can also be used to argue that any orbit in the interior of the first quadrant must enter either i) or ii), and therefore converge to $E^*$.

The coexistence equilibrium for the parameter choice for Fig.~\ref{Fig:Ricker}b) is locally asymptotically stable, as the eigenvalues of the Jacobian are within the unit-circle. However, in this particular example, the augmented phase portrait cannot even be used to  determine the local  asymptotic stability of the coexistence equilibrium. 
It however identifies regions of interest. For example, an orbit could oscillate between the small region  containing  the black and gray `+' symbols, and the region with the black and gray `--' symbols above both nullclines.   
Furthermore, none of the regions bounded by nullclines is positively invariant. This can be immediately recognized by noting that in all four component-wise monotone regions, there exists at least one root-curve associated with a nullcline that partially lies in  this region. This causes a change in the sign of the corresponding next-iterate root operator.  




\subsection{Example: Model with  Mutualism}

We consider the following  example  involving mutualism:
\begin{equation}\label{eq:Mutualism}
X_{t+1}=\frac{(a+bY_t) X_t}{A+BX_t}, \qquad Y_{t+1}=\frac{(c+dX_t)Y_t}{C+DY_t},
\end{equation}
with initial conditions $X_0,Y_0\geq 0$ and positive parameters.

Augmented phase portraits for \eqref{eq:Mutualism} are shown in Fig~\ref{fig:Mutualism} for two different parameter choices.  
Although, for the choice of parameters in Fig.~\ref{fig:Mutualism}a), the root-curves are not unique, this is not what prevents determining that $E^*$ is globally asymptotically stable with respect the interior of the first quadrant.  It is,  that we cannot rule out orbits oscillating indefinitely between regions $\mathcal{D}_1$ and $\mathcal{D}_2$ without converging to $E^*$.  What the augmented phase portrait does tell us is that the basin of attraction of $E^*$ is contained in the union of all of the regions that  have one `+' and one '-' symbol, the part of the region on the left containing two `+' symbols, where $Y\leq Y^*$, and the part of the region on the bottom-right containing two `--' symbols where $X<X^*$.

From the augmented phase portrait in Fig.~\ref{fig:Mutualism}b), we can conclude that $E^*$ is globally asymptotically stable with respect to initial conditions $(X_0,Y_0)\in (0,6]\times (0,6]$. The problematic regions  $\mathcal{D}_1$ and $\mathcal{D}_2$ in Fig.~\ref{fig:Mutualism}a), are now detached from the equilibrium and are each separate curves outside $[0,6]\times [0,6]$.

\begin{figure}[h!]
\centering 
a) \hspace{7.5cm} b)

  \begin{multicols}{2}
 \includegraphics[trim=1cm 0cm 2cm 0cm,clip,scale=.25]{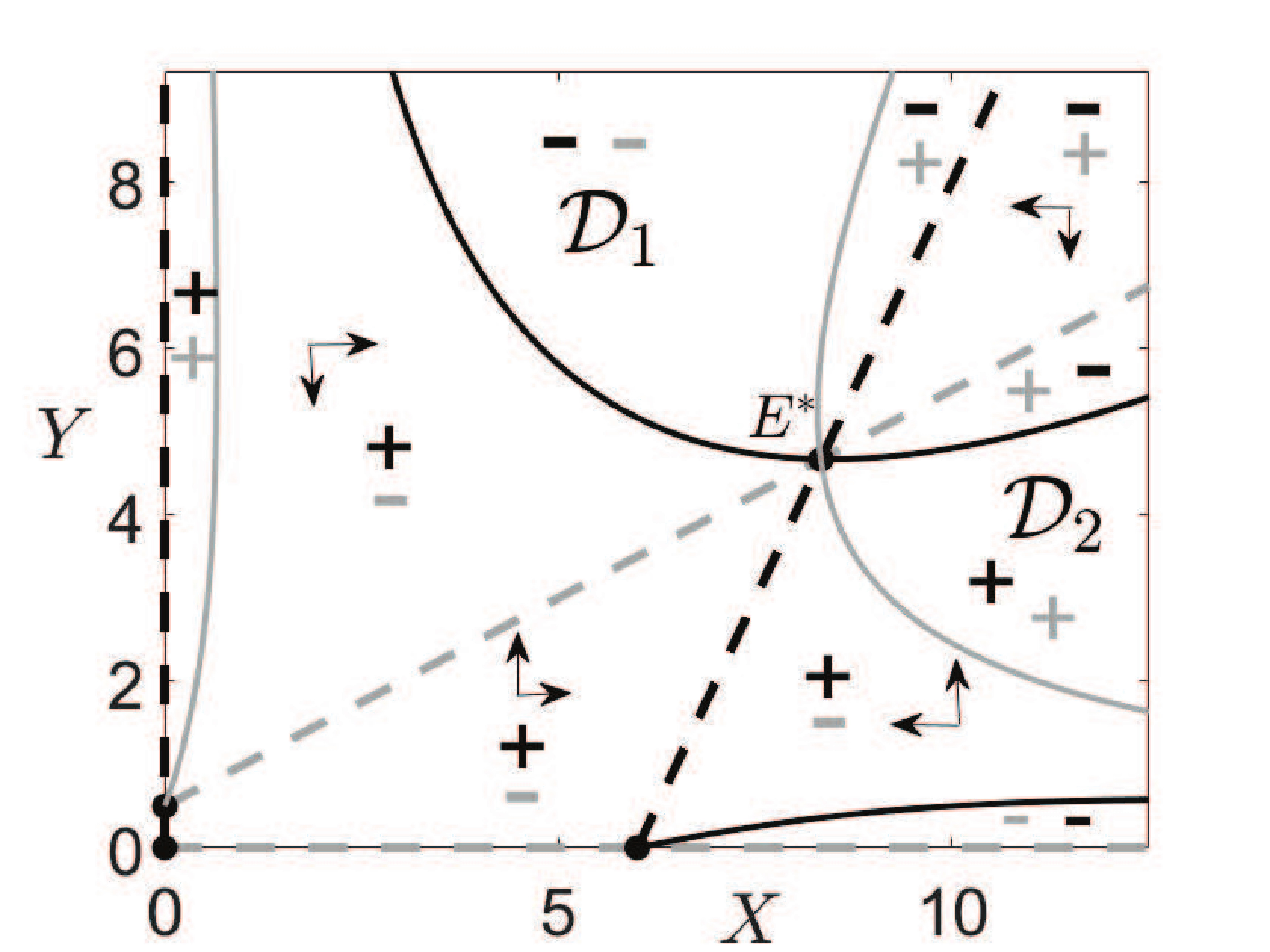}
 \columnbreak
\includegraphics[trim=0cm 0cm 0cm 0cm,clip,scale=0.34]{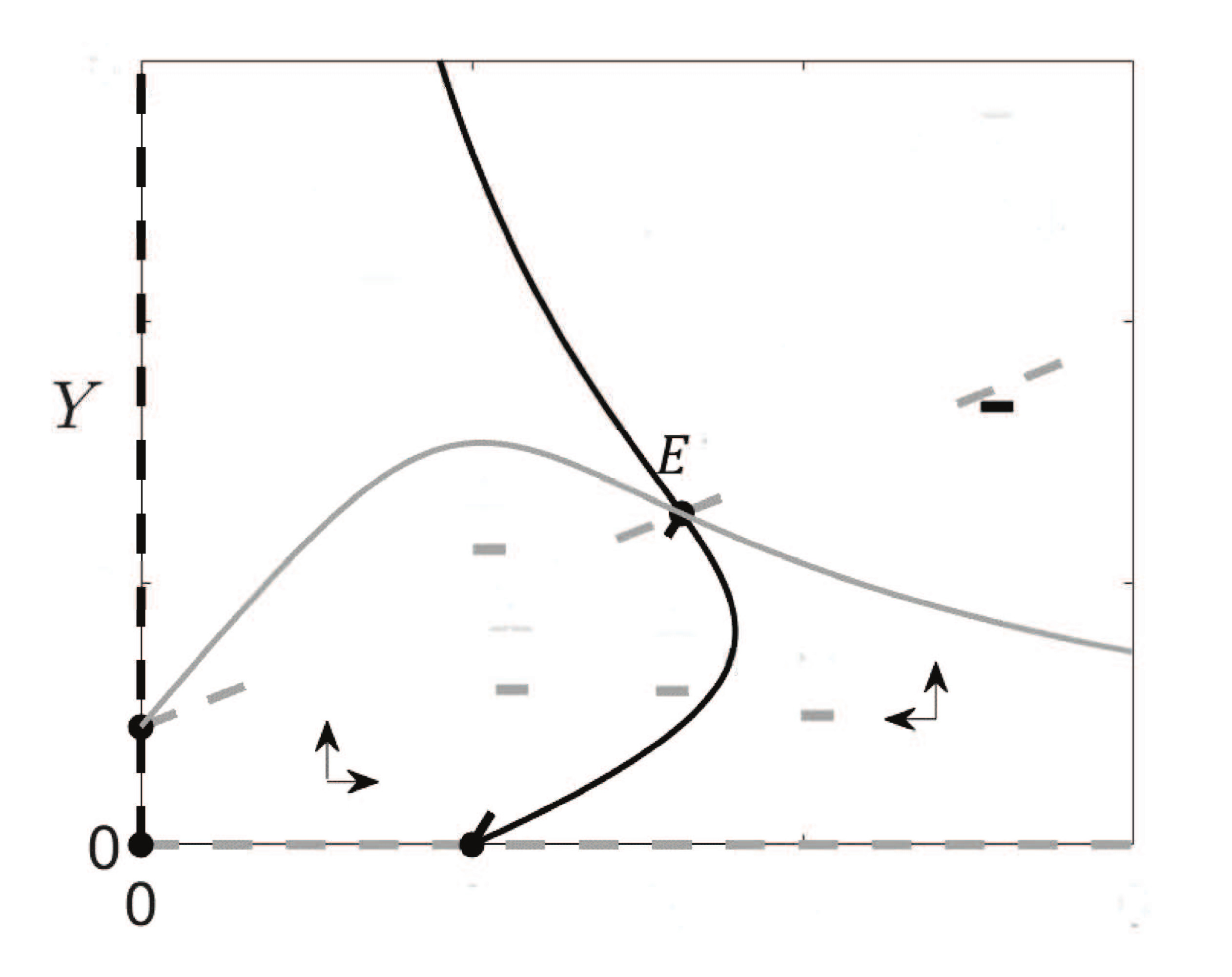}
 \end{multicols}
 \caption{   The augmented phase portrait for \eqref{eq:Mutualism}.  In a),  $a=16$, $b=1$, $A =4$, $B=2$, $c=4$, $d=1$, $C=3$, $D=2$. There are two root-curves associated with each of the nontrivial nullclines. All orbits with initial conditions outside of $\mathcal{D}_1\cup \mathcal{D}_2$, converge to $E^*$.  
 In b), $a=8$, $b=1$, $A =4$, $B=2$, $c=4.8$, $d=1$, $C=3$, $D=2$. All orbits with initial conditions $(X_0,Y_0)\in (0,6]\times (0,6]$, converge to $E^*$.}
 \label{fig:Mutualism}
 \end{figure}

\subsection{Example: Predator--Prey Model}

In \cite{StWoBo1},  we  derived and analysed the discrete predator--prey model:
\begin{equation}\label{LV1}
X_{t+1}=\frac{(1+r)X_t}{1+\frac{r}{K}X_t+\alpha Y_t}, \qquad Y_{t+1}=\frac{(1+\gamma X_t)Y_t}{1+d},   
\end{equation}
with initial conditions $X_0,Y_0\geq 0$, 
where all parameters are positive and 
$X$ and $Y$ denote the prey and predator populations. 

In \cite{StWoBo1},  the root-curve associated with the (nontrivial) prey nullcline was used to discuss the global dynamics of solutions of \eqref{LV1}.
 In the case when no coexistence equilibrium exists, an augmented phase portrait, as in Fig.~\ref{Fig:PP}a),  was used to determine the global asymptotic stability of the prey-only equilibrium $E_K=(K,0)$. When a coexistence equilibrium $E^*$ exists, as in Fig.~\ref{Fig:PP}b),  it was shown  that the augmented phase portrait excludes the existence of prime period 2 and 3 orbits. The global asymptotic stability of $E^*$, whenever it is locally asymptotically stable (i.e., $d<\gamma K<1+2d$), remains a conjecture.

\begin{figure}[h!]

 a) \hspace{7cm} b) \hspace{3cm}
 
 \includegraphics[scale=0.35]{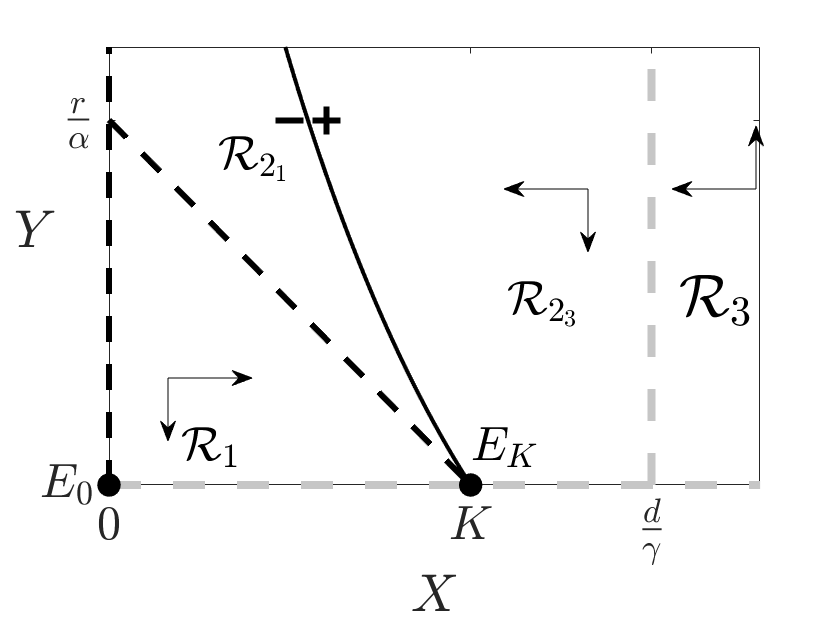}
 \includegraphics[scale=0.35]{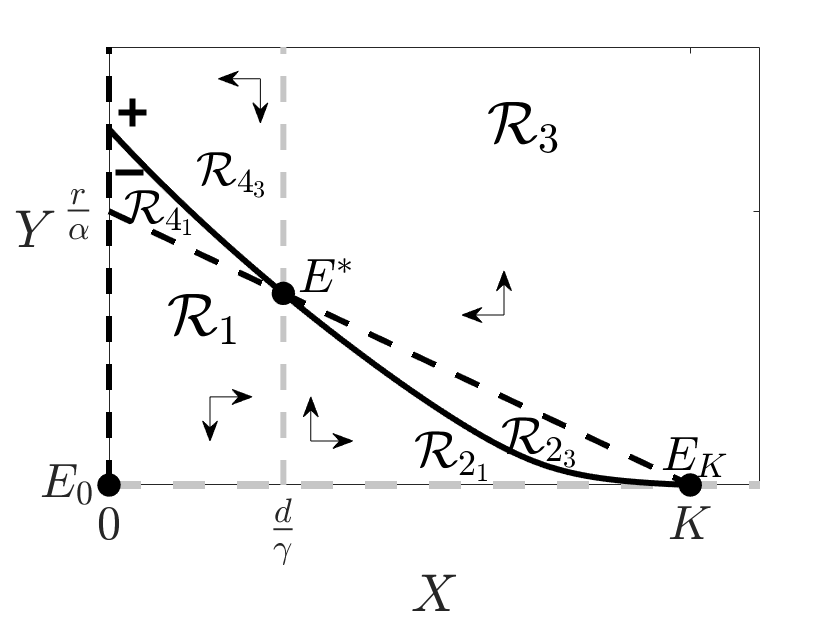}
    \caption{Augmented phase portraits for the predator--prey model derived in  \cite{StWoBo1}.  In a), where $K\gamma<d$,  the prey-only equilibrium $E_K$ is globally asymptotically stable. In b), where $d<\gamma K$, there is a unique interior equilibrium $E^*$. Orbits either converge to $E^*$ eventually monotonically 
    or they cycle around $E^*$ indefinitely visiting each of the regions  $\mathcal{R}_i$, \, $i=1,2,3,4,$ at least once in each cycle.}
    \label{Fig:PP}
\end{figure}

Just as for the continuous analogue of \eqref{LV1} ($x'=rx\left(1-\frac{x}{K}\right)-\alpha xy$, $y'=y(-d+\gamma x)$), phase plane analysis alone is  not enough to obtain a complete picture of the global dynamics when a coexistence equilibrium exists. For the continuous model, additional arguments are needed, including the application of the Dulac criterion, to rule out period orbits, and the Poincar\'{e}-Bendixson Theorem. Even though  the configuration of the standard phase portraits for the continuous predator-prey model is the same as in Fig.~\ref{Fig:PP}, unlike the continuous model for which no periodic orbits are possible and the existence of the coexistence equilibrium implies it is globally asymptotically stable,  in \cite{StWoBo1} it was shown that for the discrete model the coexistence equilibrium undergoes a Neimark-Sacker bifurcation when $\gamma K = 1+2d$ and loses it stability.

\section{Conclusion}
 
 We describe an elementary approach for analyzing planar discrete maps that can provide information about the global dynamics.  Standard phase plane analysis has not been very effective in this context, since unlike in the case of planar systems defined by smooth differential equations,   orbits of discrete maps can jump over nullclines, as shown in Fig.~\ref{Fig:Jump}.  To overcome this drawback, we introduce the next-iterate operators associated with the nullclines and their associated root-sets and root-curves.
Knowing the sign of the next-iterate operators in a region of the phase plane tells us on which side of the nullcline the operator is associated with, the next iterate will lie.
By providing examples, we show that it is sometimes possible to determine the global dynamics of planar maps, by augmenting the standard phase portrait by including the signs of the next-iterate operators, where required.  We then call the standard phase portrait that includes these signs, the {\it augmented phase portrait}.

 In Section~\ref{sec:compete},  we showed how to use the augmented phase plane to determine the global dynamics of a  well-studied two species competition model \eqref{Compete}.  We provided a more elementary approach, compared to the use of the theory of monotone flows, to show that the relative values of the competitive efficiencies completely determine the global dynamics, just as in the case of the analogous continuous model \eqref{Compete_cont}. Using the augmented phase portrait, we were also able to determine the local stability of all of the equilibria without having to resort to linearization (i.e., finding the eigenvalues of the Jacobian at the equilibrium), even in one case when linearization would have been inconclusive. We were also able to find the invariant and positively invariant regions and then use the direction field within these regions to conclude convergence of orbits once they enter one of these regions.

We also   discuss some extensions and limitations of the augmented phase portrait in Section~\ref{sec:ext_limits} by considering three examples:  a Ricker competition model, a model involving species that display mutualistic behavior, and a  predator-prey model. The complexity of root-sets for the Ricker competition model was illustrated in Fig.~\ref{Fig:Ricker}.  We provided one set of parameters for which use of the augmented phase portrait could be used to  determine the global dynamics completely and one that illustrated that there can be problematic regions in the phase portrait. Next, we addressed a model involving mutualism. Fig.~\ref{fig:Mutualism}a)  illustrated that the root-curves do not have to be unique. However, it was not the non-uniqueness of the root-curves that  prevented determining the global dynamics from the augmented phase portrait. Instead it was the existence of a  `++' region below both nullclines and a `-- --' region above both nullclines that, along with the  direction field, did not allow ruling out orbits  oscillating  between these two regions. It is also important to note that although Figs.~\ref{Fig:Ricker}a) and \ref{fig:Mutualism}b) also have a `++' region and a `-- --' region, these do not cause a problem due to the direction field in those regions. 
Finally, for the predator-prey model, it is possible    to use the augmented phase portrait to determine the  asymptotic outcome for all orbits  in the case that there is no coexistence equilibrium, and in particular  prove that the  prey-only equilibrium  is globally asymptotically stable when it is locally asymptotically stable. The augmented phase portrait also showed that if an orbit does not converge to the coexistence equilibrium, the orbit  cycles around it  and must visit four different regions at least once in every cycle,  thus ruling out  prime period 2 and period 3 orbits.

 In ongoing research, we continue to explore whether this elementary approach, i.e., using the augmented phase portrait,  can be used in other contexts to determine different global properties of discrete planar models such as delay difference equations  and general rational maps.
\medskip

\noindent
{\it Acknowledgement}:  The research of Gail S. K. Wolkowicz was partially supported by a Natural Sciences and Engineering Research Council of Canada (NSERC) Discovery grant with accelerator supplement.

\appendix

\section{Appendix}   



\subsection{Proof of (\ref{calcLs})}\label{PfcalcLs}

Substituting  the expression for the $X$ nullcline given in  
\eqref{eq:Linex} in $\mathcal{L}_h(X,Y)$,
 we have
\begin{align*}
    \mathcal{L}_h(X,Y)&=G(X,Y)-h(F(X,Y)) = \frac{(1+r_2)Y}{1+\frac{r_2}{K_2}Y+\alpha_2X}-\frac{r_1}{\alpha_1 K_1}\left(K_1-\frac{(1+r_1)}{1+\frac{r_1}{K_1}X+\alpha_1Y}\right)\notag \\
&=\frac{ N_h(X,Y)}{\alpha_1 (K_1 + r_1 X + \alpha_1 K_1 Y) (K_2 + \alpha_2 K_2 X + r_2 Y)}, 
\end{align*}
where
\begin{equation*}
N_h(X,Y)=a_0(X)+a_1(X)Y+a_2Y^2=A_0(Y)+A_1(Y)X+A_2X^2,
\end{equation*}
with 
\begin{align}
a_0(X)\quad &= \quad -r_1 K_2 (K_1 - X) (1 + \alpha_2 X), \notag\\
a_1(X)\quad&= \quad -r_1 r_2 (K_1 - X) - 
 \alpha_1 K_2 (-r_1 (1 + r_2) X + K_1 (-1 + r_1 - r_2 + \alpha_2 r_1 X)), \notag\\
a_2\qquad \quad&=\quad \alpha_1 K_1 (\alpha_1 K_2 (1+r_2)-r_1 r_2),\label{CoefLh}\\
    A_0(Y)\quad&=\quad K_1 (1 + \alpha_1 Y) (-r_1 r_2 Y + K_2 (-r_1 + \alpha_1 (1 + r_2) Y)),\notag \\
    A_1(Y)\quad&=\quad r_1 (r_2 Y + K_2 (1 + \alpha_1 (1 + r_2) Y - \alpha_2 (K_1 + \alpha_1 K_1 Y))),\notag \\
    A_2\qquad \quad&=\quad \alpha_2K_2 r_1. \notag
\end{align}
It follows that  
\qquad $\mathcal{L}_h(X,Y)=0\quad \iff \quad Y=r_{h_i}(X) \quad \mbox{or } \quad X=R_{h_i}(Y), \quad i=1,2,$
where
\begin{equation}\label{rh}
    r_{h_1}(X):= \begin{cases} \frac{-a_1(X) + \sqrt{a_1^2(X)-4a_0(X)a_2}}{2a_2}, & a_2\neq 0,\\
    -\frac{a_0(X)}{a_1(X)}, & a_2=0,\end{cases} \quad     r_{h_2}(X):= \begin{cases} \frac{-a_1(X) - \sqrt{a_1^2(X)-4a_0(X)a_2}}{2a_2}, & a_2\neq 0,\\
    -\frac{a_0(X)}{a_1(X)}, & a_2=0,\end{cases}
\end{equation}
and
\begin{equation}\label{Rh}
    R_{h_1}(Y):=\frac{-A_1(Y)+ \sqrt{A_1^2(Y)-4A_0(Y)A_2}}{2A_2}, \quad R_{h_2}(Y):=\frac{-A_1(Y) - \sqrt{A_1^2(Y)-4A_0(Y)A_2}}{2A_2}.
\end{equation}

Substituting  the expression for the $Y$ nullcline given in   \eqref{Liney} in  $\mathcal{L}_k(X,Y)$, we have
\begin{align*}
    \mathcal{L}_k(X,Y)&=G(X,Y)-k(F(X,Y))= \frac{(1+r_2)Y}{1+\frac{r_2}{K_2}Y+\alpha_2X}-\frac{K_2}{r_2}\left(r_2-\alpha_2\frac{(1+r_1)}{1+\frac{r_1}{K_1}X+\alpha_1Y}\right)\notag \\
&=\frac{N_k(X,Y)}{r_2 (K_1 + r_1 X + \alpha_1 K_1 Y) (K_2 + \alpha_2 K_2 X + r_2 Y)},  
\end{align*}
with 
\begin{equation*}
N_k(X,Y)=b_0(X)+b_1(X)Y+b_2Y^2=B_0(Y)+B_1(Y)X+B_2X^2,
\end{equation*}
where
\begin{align}
    b_0(X) \quad &= \quad K_2^2 (1 + \alpha_2 X) (K_1 (  \alpha_2 (1 + r_1) X-r_2)-r_1r_2X), \notag\\
b_1(X) \quad &= \quad K_2 r_2 (r_1 X + K_1 (1 + \alpha_2 (1 + r_1) X - \alpha_1 (K_2 + \alpha_2 K_2 X))), \notag\\
    b_2 \qquad \quad &  =\quad \alpha_1 r_2 K_1 K_2,\label{CoefLk}\\
    B_0(Y)\quad &=\quad -K_1 K_2 r_2 (K_2 - Y) (1 + \alpha_1 Y),\notag \\
    B_1(Y)\quad & = \quad K_2 (r_1 r_2 (Y-K_2) + 
   \alpha_2 K_1 ((1 + r_1) r_2 Y + K_2 (1 + r_1 - r_2 (1 + \alpha_1 Y)))),\notag \\
   B_2 \qquad \quad &= \quad \alpha_2 K_2^2 (\alpha_2 K_1 (1 + r_1) - r_1 r_2).\label{CoefLkiny}
\end{align}
 We therefore have 
\begin{equation*}
\mathcal{L}_k(X,Y)=0\qquad \iff \qquad Y=r_{k_i}(X) \quad \mbox{or } \quad X=R_{k_i}(Y), \qquad i=1,2,
\end{equation*}
where
\begin{equation}\label{rk}
    r_{k_1}(X):=\frac{-b_1(X)+\sqrt{b_1^2(X)-4b_0(X)b_2}}{2b_2}, \quad r_{k_2}(X):=\frac{-b_1(X)-\sqrt{b_1^2(X)-4b_0(X)b_2}}{2b_2}
\end{equation}
and 
\begin{equation}\label{Rk}
    R_{k_1}(Y):=\begin{cases}
\frac{-B_1(Y)+\sqrt{B_1^2(Y)-4B_0(Y)B_2}}{2B_2}, & B_2\neq 0,\\
-\frac{B_0(Y)}{B_1(Y)}, & B_2=0,\end{cases} \,\,
     R_{k_2}(Y):=\begin{cases}
\frac{-B_1(Y)- \sqrt{B_1^2(Y)-4B_0(Y)B_2}}{2B_2}, & B_2\neq 0,\\
-\frac{B_0(Y)}{B_1(Y)} & B_2=0.\end{cases}
\end{equation}

\subsection{Proof of Lemma \ref{LhLkCase1}}\label{PfLhLkCase1}

\begin{proof}
Substituting $ K_2=\frac{r_1}{\alpha_1}$ and $K_1=\frac{r_2}{\alpha_2}$ in \eqref{CoefLh}, we have $ a_2=\frac{\alpha_1 r_1 r_2}{\alpha_1}>0$
and therefore, by \eqref{rh},    
$r_{h_1}(X)=\frac{r_1( r_2 - \alpha_2 X)}{\alpha_1 r_2}=h(X)$
and  
$r_{h_2}(X)=\frac{ -(\alpha_2 X+1)}{\alpha_1}<0$.
Thus, there is a unique positive  root-curve, $r_h(X)=r_{h_1}(X)=h(X),$  associated with the positive $X$ nullcline, $Y=h(X),$ that is defined for $0<X<\frac{r_2}{\alpha_2}=K_1.$
Since the next-iterate operator associated with $Y=h(X)$ only changes sign at this root-curve, we have with \eqref{LhX0},
\begin{equation*}
    \mathcal{L}_h(X,Y)\quad \begin{cases}
 \quad    <\quad 0,\quad & \quad \mbox{if }  \,(X,Y)\in \mathcal{R}_1, \\
 \quad    >\quad 0, \quad &  \quad \mbox{if }  \,(X,Y)\in \mathcal{R}_2 ,\end{cases} 
\end{equation*}
so that \eqref{signC1hb} follows.

Substituting $ K_2=\frac{r_1}{\alpha_1}$ and $K_1=\frac{r_2}{\alpha_2}$ in \eqref{CoefLkiny}, we have
$B_2= \frac{\alpha_2 r_1^2 r_2}{\alpha_1^2}>0$,
and therefore, by \eqref{Rk},
$X=R_{k_1}(Y)=\frac{r_1}{\alpha_2}\left(1-\frac{Y}{K_2}\right)=k^{-1}(Y) $
and 
$ X=R_{k_2}(Y)=\frac{-(\alpha_1 Y +1)}{\alpha2}<0$.
Thus, there is again a unique positive root-curve $X=R_{k_1}(Y)=k^{-1}(Y)$ that is positive for $0<Y< K_2$ associated with  the positive $Y$ nullcline, $Y=k(X)$.
Hence, the next-iterate root operator associated with the positive $Y$ nullcline only changes sign  at the nullcline $Y=k(X)$, and with \eqref{LkX0}, it follows that
\begin{equation*}
\mathcal{L}_k(X,Y)\quad \begin{cases} \quad < \quad 0,\quad & \, \mbox{if} \quad (X,Y)\in \mathcal{R}_1,\\
\quad >\quad 0, \quad &\, \mbox{if} \quad (X,Y)\in \mathcal{R}_2,\end{cases}
\end{equation*}
 and \eqref{signC1kb} follows.
\end{proof}

\subsection{Proof of Lemma \ref{rootintersectD1}}\label{PfrootintersectD1}
\begin{proof}
First, recall that if $C_{12}\cdot C_{21}>0$, then $E^*\in \mathcal{E}$.
Clearly, $E^*$ is in the intersection of the sets.  Let $(\bar{X},\bar{Y})\in S_k\cap S_h$. Then, by Remark \ref{intersectgen}, $F(\bar{X}, \bar{Y})=X^*$ and $G(\bar{X},\bar{Y})=Y^*$. That is, 
\begin{align*}
F(\bar{X},\bar{Y})-F(X^*,Y^*)=\frac{(1+r_1)}{m_1(\bar{X},\bar{Y})m_1(X^*,Y^*)}\left(\bar{X}-X^*+\alpha_1(\bar{X}Y^*-X^*\bar{Y})\right)\\ 
G(\bar{X}, \bar{Y})-G(X^*,Y^*)
=\frac{(1+r_2)}{m_2(\bar{Y},\bar{X})m_2(Y^*,X^*)}\left(\bar{Y}-Y^*+\alpha_2 (X^*\bar{Y}-\bar{X}Y^*)\right),
\end{align*}
where $m_i(u,v)=1 +\frac{r_i}{K_i}u+\alpha_iv>0$, for $i=1,2$. 

We use proof by contradiction to show that  no point is also in  $(\bar{X},\bar{Y})\in \{(X,Y)\colon 0<X\leq X^*, 0<Y\leq Y^*\}\backslash E^*$  or $(\bar{X},\bar{Y})\in \{(X,Y)\colon\,  X\geq X^*, \,  Y\geq Y^*\}\backslash E^*$.
Since  $\bar{X}\leq X^*$ and $\bar{Y}\leq Y^*$ or  $\bar{X}\geq X^*$ and $\bar{Y}\geq Y^*$, but $(\bar{X}, \bar{Y})\neq E^*$, we have without loss of generality $\bar{X}\neq X^*$,
\begin{align*}
0 >  (\bar{X}-X^*)& =  ~ \alpha_1(X^* \bar{Y}-\bar{X} Y^*),\\
0  \geq (\bar{Y}-Y^*)& =  - \alpha_2(X^* \bar{Y}-\bar{X} Y^*),
\end{align*}
yielding a contradiction. 
\end{proof}

\subsection{Proof of Lemma \ref{LemsignCase2}}\label{PfLemsignCase2}
\begin{proof}
{\it a)}  Assume that $(X,Y)\in \mathcal{R}_2\cup \mathcal{R}_3$. 
By \eqref{CoefLh},
$$a_2=\alpha_1K_1\left(\alpha_1K_2(1+r_2)-r_1r_2\right)\stackrel{C_{12}<0}{\geq} \alpha_1 K_1\left(\alpha_1 \frac{r_1}{\alpha_1}(1+r_2)-r_1r_2\right)=\alpha_1 K_1 r_1>0.$$

 We consider two sub-cases: {\it a)(i)} $X\geq K_1$ and {\it a)(ii)} $X<K_1$.

{\it a)(i)} Assume that  $X\geq K_1$.  By \eqref{CoefLh}, 
$a_1(X)$ is linear in $X$, and since $C_{21}>0$, 
$$a_1'(X)=r_1 (r_2 + \alpha_1 K_2 (1 - \alpha_2 K_1 + r_2))> r_1 (r_2 + \alpha_1 K_2) >0$$
and therefore  
 $$a_1(X)\geq a_1(K_1)=\alpha_1 K_1 K_2 (1  + r_2 + r_1 (r_2- \alpha_2 K_1) )>\alpha_1 K_1 K_2 (1  + r_2) >0.$$ Thus, $\frac{-a_1(X)}{2 a_2} <0$.
 Also, $a_0(X)a_2>0$, and   
 since the term under the radical in  \eqref{rh} is less than $a_1^2(X)$, we have  $Y=r_{h_1}(X)<0$ and
 $Y=r_{h_2}(X)<0$.  Since the sign of the associated next-iterate operator can only change sign at a root-curve,  
 $\mathcal{L}_h(X,Y)$ has the same sign for any  $X\geq K_1$. 
Since,  
 $$\lim_{Y\to \infty} \mathcal{L}_h(K_1,Y)=\lim_{Y\to \infty}G(K_1,Y)-h(F(K_1,Y))=\frac{1+r_2}{r_2}K_2-h(0)>K_2-\frac{r_1}{\alpha_1} > 0,$$
$\mathcal{L}_h(X,Y)>0$ for all $X\geq K_1$.

 {\it a)(ii)} Assume that $X\in (0,K_1)$. Since,  by \eqref{CoefLh}, $a_2>0$ and $a_0(X)<0$, by \eqref{rh}, $Y=r_{h_2}(X)<0$ and $Y=r_{h_1}(X)>0$, and so  $S_h$ is uniquely determined for $X\in (0,K_1)$ by  the root-curve $Y=r_{h_1}(X)$. 
 Since there is no coexistence equilibrium, by Lemma~\ref{Lemnoint},  $Y=r_{h_1}(X)$ cannot intersect the nullcline $Y=h(X)$ for any $0<X<K_1$, so that $Y=r_{h_1}(X)$ must lie either entirely above or entirely below  $Y=h(X)$. Since $h(X)<k(X)$, for $0<X<K_1$, by Lemma~\ref{usefulhk}a), $\mathcal{L}_h(X,h(X))>0$. By \eqref{LhX0}, $\mathcal{L}_h(X,0)<0$. Thus, $\mathcal{L}_h(X,Y)$ must have already changed sign in $\mathcal{R}_1$,
 and so  
 $Y=r_{h_1}(X)$ must lie below $Y=h(X)$. Thus,   $\mathcal{L}_h(X,Y)>0$,
 for all $0<X<K_1$, such that  $X\in\mathcal{R}_2\cup\mathcal{R}_3$. 
 
   By {\it a)(i)} and {\it a(ii)},   $\mathcal{L}_h(X,Y)>0$, for all $(X,Y)\in \mathcal{R}_2\cup\mathcal{R}_3.$

{\it b)} Assume that $(X,Y)\in \mathcal{R}_1\cup \mathcal{R}_2$.  
The sign of $b_0(X)$, defined in \eqref{CoefLk}, is the same as the sign of the  factor
\begin{align*}
\tilde{b}_0=( K_1 (  \alpha_2 (1 + r_1) X-r_2)-r_1r_2X)
 &=     X (  
 \alpha_2 K_2( 1+r_1) -r_1 r_2)  -  r_2 K_1, 
 \end{align*}
a linear function of $X$.   $\tilde{b}_0(0)=-r_2 K_1<0$, and since $C_{21}>0$, $\tilde{b}_0\left(\frac{r_2}{\alpha_2}\right)= \frac{r_1 r_2 K_2^2(r_2 + 1)( \alpha_2 K_1 - r_2)}{\alpha_2}<0$.
 Hence, $b_0(X)<0$, for all $X\in\left[0,\frac{r_2}{\alpha_2}\right]$.  By \eqref{CoefLk}, $b_2>0$  so that by \eqref{rk}, $Y=r_{k_1}(X)>0$  and $Y=r_{k_2}(X)<0$ for all $X\in \left(0,\frac{r_2}{\alpha_2}\right)$. Hence, only one root-curve is positive for  $X\in \left(0,\frac{r_2}{\alpha_2}\right)$. This implies that $\mathcal{L}_k(X,Y)$ can change sign at most once in $\mathcal{R}_1\cup\mathcal{R}_2.$
By Lemma~\ref{usefulhk}a), since $h(X)<k(X)$, $\mathcal{L}_k(X,k(X))<0$  for all $X\in\left(0,\frac{\alpha_2}{r_2}\right).$ 
Since  $k(X)$ is decreasing and $F(X,Y)$ is increasing in $X$,  
$$\mathcal{L}_k(X,0)\stackrel{\eqref{LkX0}}{=}-k(F(X,0))<-k(F(K_1,0))<-k(K_1)<0.$$
Therefore,  
the sign of the next-iterate operator associated with $Y=k(X)$ did not change sign for $X\in \left(0,\frac{\alpha_2}{r_2}\right)$ and so
$\mathcal{L}_k(X,0)<0$, for all $X\in\mathcal{R}_1\cup\mathcal{R}_2$.
 \end{proof}

\subsection{Proof of Lemma \ref{LemsignCase3}}\label{PfLemsignCase3}
\begin{proof}
 First consider {\it a)} and {\it b)}.
Since $C_{12}<0$, $K_2>\frac{r_1}{\alpha_1}$, and so the sign of $a_2$ is given by the sign of 
$$\alpha_1K_2(1+r_2)-r_1r_2>\alpha_1\frac{r_1}{\alpha_1}(1+r_2)-r_1r_2=r_1>0.$$
Since $a_2>0$   and   $a_0(X)<0$, for all $X\in (0,K_1)$,
we have by \eqref{rh}, $r_{h_2}(X)<0$  and  $r_{h_1}(X)>0$, for all $X\in (0,K_1)$. Hence, the function $Y=r_{h_1}(X)$ determines $S_h$ uniquely  in the regions considered in {\it a)} and{\it  b)}.
\begin{enumerate}
  \item[{\it a)}]  Assume that $(X,Y)\in  \mathcal{R}_4\cup  \mathcal{R}_{3_4}$.
By \eqref{LhX0}, $\mathcal{L}_h(0,X)<0$ for all $X\in (0,X^*)$ and by  \eqref{signhkC3} and  Lemma~\ref{usefulhk}{\it a)},   $\mathcal{L}_h(X,h(X))>0$ for all $X\in(0,X^*)$.
Therefore, the  associated next-iterate operator, $\mathcal{L}_h(X,Y)$ must already have changed sign and become positive below $Y=h(X)$
when $X\in(0,X^*)$. Thus, the unique positive root-curve, $Y=r_{h_1}(X)$, must lie below the line $Y=h(X)$  for all $X\in (0,X^*)$.  We conclude that
$\mathcal{L}_h(X,Y)>0$, for all $(X,Y)\in \mathcal{R}_4\cup  \mathcal{R}_{3_4}$.

\item[{\it b)}] Assume that $(X,Y)\in\mathcal{R}_2\cup \mathcal{R}_{1_2}$. 
By Lemma~\ref{Lemnoint},  the only points where $Y=r_{h_1}(X)$ and the nullcline $Y=h(X)$ intersect are the equilibrium points $E_1$ and $E^*$.
  We again use \eqref{signhkC3}, Lemma~\ref{usefulhk}{\it b)}, and \eqref{LhX0} to conclude that  $\mathcal{L}_h(X,h(X))<0$ for $X^*<X<K_1$.
 Since, by \eqref{LhX0}, $\mathcal{L}_h(X,0)<0$ for all $X\in (X^*,K_1)$, we have by continuity that the next-iterate operator associated with $Y=h(X)$ must change sign above the nullcline $Y=h(X)$ for $X\in (X^*,K_1)$. Thus, the root-curve $Y=r_{h_1}(X)$ must lie above the nullcline $Y=h(X)$ for $X^*<X<K_1$ and hence $\mathcal{L}_h(X,Y)<0$ for all $(X,Y)\in \mathcal{R}_2\cup \mathcal{R}_{1_2}$.

   \hspace{.25in} Next consider {\it c)} and {\it d)}. Since $C_{21}<0$, $K_1>\frac{r_2}{\alpha_2}$, and so 
 $$B_2=\alpha_2 K_2^2 (\alpha_2 K_1 (1 + r_1) - r_1 r_2)>\alpha_2  K_2^2 r_2>0.$$
 Since $B_2>0$   and   $B_0(Y)<0$ for all $Y\in (0,K_2)$,
we have by \eqref{rk} that $R_{k_2}(Y)<0$  and  $R_{k_1}(Y)>0$ for all $Y\in (0,K_2)$. Hence, for $Y\in (0,K_2)$,  the function $X=R_{k_1}(Y)$ 
determines $S_k$  uniquely  in the regions considered in c) and d).

\item[{\it c)}]  Assume that $(X,Y)\in \mathcal{R}_2 \cup \mathcal{R}_{3_2}$.
 For $Y\in (0,Y^*)$, $X>X^*$, so that by \eqref{signhkC3} and Lemma~\ref{usefulhk}{\it b)}, $\mathcal{L}_k(X,k(X))>0$. 
Since, by \eqref{LkY0}, $\mathcal{L}_k(0,Y)<0$, this implies that the sign of the next-iterate operator associated with $Y=k(X)$ must have changed sign to the left of $X=k^{-1}(Y)$, which in turn means that the root-curve $X=R_{h_1}(Y)$ must be below the line $Y=k(X)$ for $Y\in(0,Y^*)$.  Hence, $\mathcal{L}_k(X,Y)>0$ for all $(X,Y)\in \mathcal{R}_2 \cup \mathcal{R}_{3_2}$.

\item[{\it d)}] Assume that $(X,Y)\in \mathcal{R}_4 \cup\mathcal{R}_{1_4}$.
By Lemma~\ref{Lemnoint},  the only points where $X=R_{h_1}(Y)$ and the nullcline $Y=h(X)$ intersect at the equilibrium points $E_2$ and $E^*$.
For $Y\in (Y^*,K_2)$ and $X\in (0,X^*)$ by \eqref{signhkC3} and 
Lemma~\ref{usefulhk}{\it a)}, $\mathcal{L}_k(X,k(X))<0$ for $X\in(0,X^*)$. 
Since $X=R_{h_1}(Y)$ uniquely determines $S_k$ for $Y\in (0,K_2)$ and $\mathcal{L}_k(0,Y)<0$ for $Y\in (0,K_2)$, the next-iterate operator associated with $Y=k(X)$ did not change sign to the left of $X=k^{-1}(Y)$.  Therefore, the next-iterate operator   changes sign above the nullcline $Y=k(X)$ (i.e., to the right of $X=k^{-1}(X)$). Hence, $\mathcal{L}_k(X,Y)<0$ for all $(X,Y)\in \mathcal{R}_4 \cup\mathcal{R}_{1_4}$.
\end{enumerate}
\vspace{-.85cm}
\end{proof}

\subsection{Proof of Lemma~\ref{box1}}\label{Pfbox1}

\begin{proof}
 The proof relies on the signs of the next-iterate operators indicated in the augmented phase portrait shown in Fig.~\ref{Fig:C3general}a), based on  Lemma~\ref{LemsignCase3}. By Fig.~\ref{Fig:C3general}a), $\mathcal{R}_{1_4}$ has a gray `--' symbol, so that $\mathcal{L}_k(X,Y)<0$ in that region. In $\mathcal{R}_{1_2}$, the black `--' symbol indicates that $\mathcal{L}_h(X,Y)<0$ in that region. An orbit in $\mathcal{D}_1$ can only enter $\mathcal{D}_2$ if there is a sub-region of  $\mathcal{D}_1$ with a black `+' symbol and a gray `+' symbol, as the orbit would have to jump over both nullclines. Since the sign can only change at root-curves and both root-curves cannot cross within $\mathcal{D}_1$ by Lemma~\ref{rootintersectD1},  it suffices to show that the  root-curve associated with the nullcline $Y=k(X)$ remains below the root-curve associated with the nullcline $Y=h(X)$ in $\mathcal{D}_1$. In the proof of  Lemma~\ref{LemsignCase3}c) and d),  it was shown that $\mathcal{S}_k$ is determined uniquely by the root-curve $X=R_{k_1}(Y)$ for $Y\in (0,K_2)$.  Also, $B_2>0$, $B_0(0)=-K_1 K_2^2 r_2<0$, and therefore $X=R_{k_1}(Y)$ intersects the $X$-axis at a value  $X\in(0,\frac{r_2}{\alpha_2}]$, noting the  gray `+' symbols in region $\mathcal{R}_2\cup \mathcal{R}_{3_2}$.

 In the proof of  Lemma~\ref{LemsignCase3}a) and b), it was shown that  $\mathcal{S}_h$ is determined uniquely by the root-curve $Y=r_{h_1}(X)$ for $X\in (0,K_1)$. By \eqref{CoefLh}, $a_2>0$ and since $a_0(0)=-K_1 K_2 r_1<0$,  it follows by \eqref{rh} that $r_{h_1}(0)>0$, and so $Y=r_{h_1}(X)$ intersects the $Y$-axis  at a value $Y\in(0,\frac{\alpha_1}{r_1})$, noting the black `+' symbols in $\mathcal{R}_4\cup\mathcal{R}_{3_4}$.
    Since, by Lemma \ref{rootintersectD1}, the root-curves do not intersect in $\mathcal{D}_1$, $X=R_{k_1}(Y)$ must remain to the right of (below) the root-curve $Y=r_{h_1}(X)$. Hence, no sub-region of $\mathcal{D}_1$ exists where both $\mathcal{L}_h(X,Y)$ and $\mathcal{L}_k(X,Y)$ are positive. 
\end{proof}

\subsection{Proof of Lemma \ref{LemsignCase4}}\label{PfLemsignCase4}
\begin{proof}
 First consider {\it a)} and {\it b)}. Since $C_{12}>0$, $K_2<\frac{r_1}{\alpha_1}$, and so  the sign of $A_0(Y)$ is given by the sign of
\begin{align*}
   -r_1r_2 Y+K_2(-r_1+\alpha_1(1+r_2)Y)
&=-r_2Y(r_1-\alpha_1K_2)-K_2(r_1-\alpha_1Y)\\
&<-K_2(r_1-\alpha_1Y).
\end{align*}
Thus, $A_0(Y)<0$ for  all $Y\in \left(0,\frac{r_1}{\alpha_1}\right)$. Since $A_2>0$, we have by \eqref{Rh} that $R_{h_2}(Y)<0$ and  $R_{h_1}(Y)>0$ for all $Y\in \left(0,\frac{r_1}{\alpha_1}\right)$. Hence, the function $X=R_{h_1}(Y)$ determines $S_h$ uniquely  in the regions considered in a) and b). Further note that for $Y\in \left(0,\frac{r_1}{\alpha_1}\right)$, we have by \eqref{LhY0},
\begin{equation}\label{LhY0C4}
\mathcal{L}_h(0,Y)=G(0,Y)-\frac{r_1}{\alpha_1}=\frac{(1+r_2)Y}{1+\frac{r_2}{K_2}Y}-\frac{r_1}{\alpha_1} = \frac{(1+r_2-\frac{r_1}{\alpha_1K_2}r_2)Y-\frac{r_1}{\alpha_1}}{1+\frac{r_2}{K_2}Y}<0,
\end{equation}
because $Y<\frac{r_1}{\alpha_1}$ and 
$1+r_2-\frac{r_1}{\alpha_1K_2}r_2<1$.

\begin{enumerate}
  \item[{\it a)}]  Assume that $(X,Y)\in  \mathcal{R}_4\cup  \mathcal{R}_{1_4}$.
 By  \eqref{signhkC4} and  Lemma~\ref{usefulhk}{\it b)},   $\mathcal{L}_h(X,h(X))<0$ for all $X\in(0,X^*)$,  i.e., $\mathcal{L}_h(h^{-1}(Y),Y)<0$ for all $Y \in \left(Y^*,\frac{r_1}{\alpha_1}\right)$.   Since by \eqref{LhY0C4},  $\mathcal{L}_h(0,Y)<0$. This implies that the  next-iterate operator $\mathcal{L}_h(X,Y)$ did not change sign  between the $Y$-axis and the nullcline $Y=h(X)$ for $Y\in \left(Y^*,\frac{r_1}{\alpha_1}\right)$. Therefore, the associated root-curve $X=R_{h_1}(Y)$ must be on the right of $X=h^{-1}(Y)$ for  $Y\in \left(Y^*,\frac{r_1}{\alpha_1}\right)$ and so 
$\mathcal{L}_h(X,Y)<0$ for all $(X,Y)\in \mathcal{R}_4\cup  \mathcal{R}_{1_4}$.

\item[{\it b)}] Assume that $(X,Y)\in\mathcal{R}_2\cup \mathcal{R}_{3_2}$. 
By Lemma~\ref{Lemnoint},  the only points where $X=R_{h_1}(Y)$ and the nullcline $Y=h(X)$ intersect are the equilibrium points $E_1$ and $E^*$.
  We again use \eqref{signhkC3} and Lemma~\ref{usefulhk}{\it a)} to conclude that  $\mathcal{L}_h(h^{-1}(Y),Y)=\mathcal{L}_h(X,h(X))>0$ for all $X^*<X<K_1$,  i.e., $\mathcal{L}_h(h^{-1}(Y),Y)>0$ for $0<Y<Y^*$.
 Since, by \eqref{LhY0C4}, $\mathcal{L}_h(0,Y)<0$ for $Y\in (0,Y^*)\subset \left(0,\frac{r_1}{K_1}\right)$, we have by continuity of  $\mathcal{L}_h(X,Y)$ that the next-iterate operator associated with $Y=h(X)$  must  change sign between the $Y$-axis and the nullcline $Y=h(X)$. Hence, the root-curve $X=R_{h_1}(Y)$ must lie to the left of $Y=h(X)$ for $Y \in (0,Y^*)$. Hence, $\mathcal{L}_h(X,Y)>0$ for all $(X,Y)\in \mathcal{R}_2\cup \mathcal{R}_{3_2}$.
 \end{enumerate}

Next consider {\it c)} and {\it d)}. Since $C_{21}>0$, $K_1<\frac{r_2}{\alpha_2}$.   By \eqref{CoefLk},
 the sign of $b_0(X)$ depends on a factor that is linear in $X$, for $X>0$. Furthermore,  $b_0(0)<0$ and $$b_0\left(\frac{r_2}{\alpha_2}\right)=\frac{K_2^2 r_1 (\alpha_2 K_1 - r_2) r_2 (1 + r_2)}{\alpha_2}<0.$$
It follows that $b_0(X)<0$ for all $X\in \left(0,\frac{r_2}{\alpha_2}\right)$.  
Since $b_2>0$, we have by \eqref{rk} that $r_{k_2}(X)<0$ and $r_{k_1}(X)>0$ for all $X\in \left(0, \frac{r_2}{\alpha_2}\right)$. Hence, the function $Y=r_{k_1}(X)$ determines $S_h$ uniquely  in the regions considered in {\it c)} and {\it d)}.

\begin{enumerate}
\item[{\it c)}]  Assume that $(X,Y)\in \mathcal{R}_4 \cup \mathcal{R}_{3_4}$.
For $X\in (0,X^*)$, $Y>Y^*$, so that by \eqref{signhkC3} and Lemma~\ref{usefulhk}{\it  b)}, $\mathcal{L}_k(X,k(X))>0$. 
By \eqref{LkX0}, $\mathcal{L}_k(X,0)=-k(F(X,0))$. Since for $0<X\leq K_1$, we have $0<F(X,0)\leq K_1$ and therefore also $k(F(X,0))>0$. For $K_1<X<\frac{r_2}{\alpha_2}$, the direction field indicates that  $F(X,0)<X<\frac{r_2}{\alpha_2}$, implying that $k(F(X,0))>0$. Hence,   for $X\in \left(0, \frac{r_2}{\alpha_2}\right)$, we have by \eqref{LkX0} that $\mathcal{L}_k(X,0)<0$. 
Since the next-iterate operator is continuous and $\mathcal{L}_k(X,k(X))>0$ but $\mathcal{L}_k(X,0)<0$, the sign of $\mathcal{L}_k(X,Y)$ must have changed sign below $Y=k(X)$. This means in turn that the root-curve $Y=r_{k_1}(X)$ must lie below the nullcline $Y=k(X)$ for $X\in(0,X^*)$.  Hence, $\mathcal{L}_k(X,Y)>0$, for all $(X,Y)\in \mathcal{R}_4 \cup \mathcal{R}_{3_4}$.

\item[{\it d)}] Assume that $(X,Y)\in \mathcal{R}_2 \cup\mathcal{R}_{1_2}$.
By Lemma~\ref{Lemnoint},  the only points where $Y=r_{k_1}(X)$ and the nullcline, $Y=k(X)$, intersect are the equilibrium points, $E_2$ and $E^*$.
For $X\in (X^*,K_1)$ and $0<Y<Y^*$, by \eqref{signhkC3} and  Lemma~\ref{usefulhk}{\it a)}, $\mathcal{L}_k(X,k(X))<0$ for $X\in(0,X^*)$. 
Since $Y=r_{k_1}(X)$ uniquely determines $S_k$ for $X\in \left(0,\frac{r_1}{\alpha_1}\right)$, and $\mathcal{L}_k(X,0)<0$ for $X\in \left(0,\frac{r_1}{\alpha_1}\right)$,  we know that $\mathcal{L}_k(X,Y)$  did not change sign below $Y=k(X)$.  Therefore,  $\mathcal{L}_k(X,Y)$     changes sign above the nullcline $Y=k(X)$ and so $\mathcal{L}_k(X,Y)<0$ for all $(X,Y)\in \mathcal{R}_2 \cup\mathcal{R}_{1_2}$.
\end{enumerate}
\vspace{-.85cm}
\end{proof}

\subsection{Proof of Lemma~\ref{box2}}\label{Pfbox2}
\begin{proof}
 The proof relies on the signs of the next-iterate operators  included in the augmented phase portrait shown in Fig.~\ref{Fig:C4general}a), based on  Lemma~\ref{LemsignCase4}. 

If there is a point $(X_t,Y_t) \in \mathcal{D}_1$ such that $(X_{t+1},Y_{t+1})\in \mathcal{D}_2$, then there must be a `++' region in $\mathcal{D}_1$,  since   the orbit would have to  jump across both nullclines in order to enter $\mathcal{D}_2$. Thus, it suffices to show that there is no `++' region in $\mathcal{D}_1$.
 In the proof of  Lemma~\ref{LemsignCase4}{\it a)} and {\it b)},  we proved that  $\mathcal{S}_h$ is determined by a unique positive root-curve $X=R_{h_1}(Y)$ for all $0<Y<\frac{r_1}{\alpha_1}$.    By \eqref{Rh},  $A_2>0$ and  since $A_0(0)=-K_1 K_2 r_1 <0$, from \eqref{Rh}, $R_{h_1}(0)>0$. Thus, $X=R_{h_1}(Y)$ intersects the $X$-axis at a value $X\in(0,K_1]$. This implies that every point  to the left of $X=R_{h_1}(Y)$ satisfies $\mathcal{L}_h(X,Y)<0$, since  there are  black `--' symbols in  region $\mathcal{R}_4\cup\mathcal{R}_{1_4}$, noting also that by Lemma~\ref{Lemnoint},  $X=R_{h_1}(Y)$ cannot intersect $Y=h(X)$ except at $E^*$ and/or $E_1$.  Thus, in order for a `++' region to exist in $\mathcal{D}_1$, the   nonnegative root-curve associated with $Y=k(X)$, namely   $Y=r_{k_1}(X)$  that uniquely determines $\mathcal{S}_k$  for $0<X<\frac{r_2}{\alpha_2}$ (see proof of  Lemma~\ref{LemsignCase4}{\it c)} and {\it d)})  would have to be to the right of (below)  $X=R_{h_1}(Y)$ in $\mathcal{D}_1$.  
 However,  $b_0(0)<0$, and since $b_2>0$,  it follows that $r_{k_1}(0)>0$, and so   $r_{k_1}(X)$ intersects the $Y$-axis at a  value $Y\in(0,K_2]$  due to the  gray `+' symbols in $\mathcal{R}_4\cup\mathcal{R}_{3_4} $.
    Thus,  $X=R_{h_1}(Y)$ is below $Y=r_{k_1}(X)$, at least for some $X\in (0,K_1)$. 
    Since, by Lemma~\ref{rootintersectD1}, the root-curves cannot intersect in $\mathcal{D}_1$, $Y=R_{h_1}(Y)$ must remain to the right of (below) the root-curve $Y=r_{k_1}(X)$. 
    Hence, no `++' region can exist in $\mathcal{D}_1$. 
\end{proof}

\bibliographystyle{abbrv}
\bibliography{version1}

\end{document}